\documentclass[english,11pt]{article}
\usepackage[english]{babel}
\usepackage[normalem]{ulem}
\usepackage{amssymb,amsmath,amsthm,mathtools,braket,aligned-overset}
\usepackage{thmtools,thm-restate}
\usepackage{graphicx,xcolor}
\usepackage{float}
\usepackage{caption,subcaption}
\usepackage{tikz}
\usetikzlibrary{calc,decorations.pathmorphing,decorations.text,decorations.markings,matrix}

\usepackage{array,booktabs}
\usepackage{enumerate}

\usepackage[linesnumbered,ruled,vlined]{algorithm2e}

\usepackage{appendix}
\usepackage{titlesec}
\usepackage{authblk}

\usepackage[hidelinks]{hyperref}
\usepackage[capitalise,nameinlink,noabbrev]{cleveref}
\usepackage[noadjust]{cite}

\theoremstyle{plain}
\newtheorem{thm}{Thm}[section]

\newtheorem{claim}{Claim}[section]
\newtheorem{theorem}[thm]{Theorem}
\newtheorem{lemma}[thm]{Lemma}

\newtheorem{proposition}[thm]{Proposition}
\newtheorem{conjecture}[thm]{Conjecture}
\newtheorem{problem}[thm]{Problem}

\newenvironment{claimproof}{\noindent \emph{Proof of the claim.}}{\hfill$\qed$}

\newcommand{\Z}{\mbox{$\mathbb Z$}}

\allowdisplaybreaks[4]

\begin{document}

\title{Nowhere-zero $3$-flows in graphs with forbidden edge-cuts}

 \date{}
\author[ ]{Jiaao Li}
\author[ ]{Xinyuan Li}

\affil[ ]{\small School of Mathematical Sciences and LPMC, Nankai University, Tianjin 300071, China}
\affil[ ]{\small Emails: lijiaao@nankai.edu.cn; xinyuanli@mail.nankai.edu.cn; }

\maketitle

\begin{abstract}
Tutte's $3$-flow conjecture asserts that every $4$-edge-connected graph admits a nowhere-zero $3$-flow. In 2013, Lov\'{a}sz, Thomassen, Wu, and Zhang proved that every odd-$7$-edge-connected graph admits a nowhere-zero $3$-flow; consequently, the conjecture holds for $4$-edge-connected graphs with no edge-cut of size $5$. We consider the complementary situation in which $5$-edge-cuts are allowed. 
We show that, when edge-cuts of size $5$ are permitted, Tutte's $3$-flow conjecture holds for graphs with no edge-cut of any size from $6$ to $k$, where $k$ is an absolute constant. In fact, $k=40$ suffices and we prove a stronger version in which only nontrivial edge-cuts of those sizes are forbidden. A graph is called essentially $t$-edge-connected if  deleting any set of at most $t-1$ edges leaves at most one nontrivial component. Motivated by Jaeger's weak $3$-flow conjecture, we prove an analogous result for essential edge connectivity: every $4$-edge-connected, essentially $41$-edge-connected graph admits a nowhere-zero $3$-flow.

\end{abstract}

\section{Introduction}

Throughout this paper, graphs are finite and loopless, but may contain
parallel edges.  We follow \cite{BondyMurty2008} for terminology and
notation not defined here.  Let $G$ be a graph and let $D$ be an
orientation of $G$.  For a vertex $v\in V(G)$, let $E_D^+(v)$ and
$E_D^-(v)$ denote the sets of edges directed away from and toward $v$,
respectively.  For an integer $k\ge 2$, a nowhere-zero $k$-flow of $G$
is a pair $(D,f)$, where $f:E(G)\longrightarrow\{1,2,\ldots,k-1\}$
satisfies
\[
 \sum_{e\in E_D^+(v)}f(e)
 =
 \sum_{e\in E_D^-(v)}f(e)
 \qquad\text{for every }v\in V(G).
\]
An orientation $D$ of $G$ is a modulo $3$-orientation if
\[
 d_D^+(v)-d_D^-(v)\equiv 0\pmod 3
 \qquad\text{for every }v\in V(G),
\]
where $d_D^+(v)=|E_D^+(v)|$ and $d_D^-(v)=|E_D^-(v)|$. The subscripts are omitted when no ambiguity arises. 
It is well known that a graph admits a nowhere-zero $3$-flow if and
only if it admits a modulo $3$-orientation (see \cite{Tutte1966, LovaszThomassenWuZhang2013}).  We use these two formulations
interchangeably.

Integer flows were introduced by Tutte as a dual counterpart of map
coloring \cite{Tutte1954,Tutte1966}.  One of the central open problems in the area is the
following conjecture.

\begin{conjecture}[Tutte's $3$-flow conjecture]\label{CONJ1.1}
Every $4$-edge-connected graph admits a nowhere-zero $3$-flow.
\end{conjecture}

Jaeger \cite{Jaeger1979} proposed the weak $3$-flow conjecture,
asserting that there exists an integer $k$ such that every
$k$-edge-connected graph admits a nowhere-zero $3$-flow. Kochol \cite{Kochol2001} proved that \Cref{CONJ1.1} is equivalent to its restriction to $5$-edge-connected graphs.  Thomassen
\cite{Thomassen2012} proved that $k=8$ suffices in the weak conjecture.  Lov\'{a}sz,
Thomassen, Wu, and Zhang \cite{LovaszThomassenWuZhang2013} subsequently refined
the partial flow-extension method and proved the following stronger
odd-edge-connectivity result.

\begin{theorem}[Lov\'{a}sz--Thomassen--Wu--Zhang
\cite{LovaszThomassenWuZhang2013}]\label{thm:ltwz}
Every odd-$7$-edge-connected graph admits a nowhere-zero $3$-flow.
\end{theorem}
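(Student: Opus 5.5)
The plan follows the Lov\'{a}sz--Thomassen--Wu--Zhang partial-flow-extension method. We prove a stronger, inductive statement about \emph{$\beta$-orientations}. Let $\beta:V(G)\to\Z_3$ satisfy $\sum_v\beta(v)\equiv 0\pmod 3$, and ask for an orientation with $d^+(v)-d^-(v)\equiv\beta(v)\pmod 3$ at every $v$. Odd-$7$-edge-connectivity then comes out as the special case $\beta\equiv 0$ of a statement with a distinguished vertex $z_0$. The vertex $z_0$ has small degree and a prescribed orientation on its incident edges. The remaining vertices satisfy edge-cut conditions of the form $d(A)\ge 6$ (or $\ge 7$ for odd cuts), relaxed to $d(A)\ge 4$ when $\beta$ behaves favorably, roughly when $\beta(A)\ne 0$. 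So the first step is to fix the precise technical statement: $|V(G)|\ge 3$; $d(z_0)\le 4+|\tau(z_0)|$ with $\tau$ a suitable weight; and every $A\subset V(G)\setminus\{z_0\}$ with $|V\setminus A|\ge 2$ satisfies $d(A)\ge 4+|\tau(A)|$, where $\tau(A)\in\{0,\pm1,\pm2,3\}$ encodes $\beta(A)$ and the parity of $d(A)$. We then check that any odd-$7$-edge-connected graph satisfies this statement after choosing $z_0$ of small degree, or after splitting.

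Next we prove the technical statement by taking a minimal counterexample and deriving structure. If some nontrivial cut $A$ is \emph{tight}, meaning it has exactly $4+|\tau(A)|$ edges, we contract each side in turn. First we solve the side containing $z_0$ with $A$ contracted to a single vertex. That solution fixes the orientation of the cut edges, and we then solve the other side with its complement contracted to a new $z_0$ whose incident edges are precolored. Minimality applies to both sides, so in a counterexample every nontrivial cut has slack. Standard reductions also remove parallel edges of high multiplicity and ensure $z_0$ has the right degree. If $d(z_0)$ is small we can add or adjust edges so that it becomes larger.

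Once all cuts have slack, the core step is to pick a neighbor $z_1$ of $z_0$ and an edge $z_0z_1$, orient it suitably, and either delete it or lift a pair of edges at $z_1$. The lifting choice is designed so that $z_1$ merges into the boundary, giving a new precolored vertex $z_0'$ to which the induction hypothesis applies. I expect this to be the main obstacle. We must verify that after the operation every cut still satisfies $d(A)\ge 4+|\tau(A)|$. The slack makes this possible, but only by a careful parity analysis: deleting an edge drops $d(A)$ by one and flips the parity, which changes the required bound between the $6$ and $7$ cases. We would split into cases according to $d(z_0)$, and according to whether $\beta(z_1)$ can be absorbed by the orientation of the edges between $z_0$ and $z_1$. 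In each case we would exhibit a choice of one or two lifted edges that violates no cut inequality, using submodularity $d(A)+d(B)\ge d(A\cap B)+d(A\cup B)$ to rule out two simultaneously violated cuts. Finally, specializing to $\beta\equiv 0$ and an odd-$7$-edge-connected $G$ gives a modulo $3$-orientation, hence a nowhere-zero $3$-flow.
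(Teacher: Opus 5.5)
The paper gives no proof of this statement. It quotes the theorem from Lov\'asz--Thomassen--Wu--Zhang as background, so the only benchmark is the LTWZ argument you are reconstructing. Your framework is the right one: a rooted extension statement for $\beta$-orientations with $\tau(A)\in\{0,\pm1,\pm2,3\}$, $\tau(A)\equiv\beta(A)\pmod 3$, $\tau(A)\equiv d(A)\pmod 2$, hypotheses $d(z_0)\le 4+|\tau(z_0)|$ and $d(A)\ge 4+|\tau(A)|$, and splitting along tight nontrivial cuts in a minimal counterexample.

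\textbf{The core step is missing.} Your proposal stops where the proof has to begin. The step after the tight-cut reduction is only announced (``we would split into cases\dots''), and it is the whole content of the theorem.

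Moreover, the contraction argument gives slack only for $|A|\ge 2$ and $|V(G)\setminus A|\ge 2$; there one gets $d(A)\ge 6+|\tau(A)|$ by parity. The vertex conditions $d(v)\ge 4+|\tau(v)|$ may stay tight, so ``once all cuts have slack'' is not available. Tight vertices are exactly what block the naive moves:
\begin{itemize}
\item Deleting a pre-oriented root edge $z_0z_1$ and updating $\beta(z_1)$ always preserves (ii), since $|\tau(z_0)|$ changes by exactly $1$. But it breaks $d(z_1)\ge 4+|\tau(z_1)|$ whenever $z_1$ is tight and $|\tau(z_1)|$ increases.
\item Lifting a pair of edges at a tight $z_1$ keeps $\tau(z_1)$ but lowers $d(z_1)$ by $2$. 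So $z_1$ must be absorbed into a new root, and then $d(z_0')\le 4+|\tau(z_0')|$ must be verified.
\item The orientation at $z_0$ is prescribed, so you cannot ``orient $z_0z_1$ suitably'' or lift root edges.
\item Enlarging $d(z_0)$ by adding edges changes $d(A)$, and possibly $\tau(A)$, for every $A$ containing the other endpoint, so it also needs checking.
\end{itemize}
Until you specify the operation in each case and verify (ii) and (iii) for the resulting graph, there is no proof.

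\textbf{The specialization to $\beta\equiv 0$ is not correct as stated.} With $\beta\equiv0$ the hypothesis reads $d(A)\ge 4$ for even cuts and $d(A)\ge 7$ for odd cuts. Your informal description has the roles reversed: the bound drops to $4$ exactly when $\beta(A)=0$ and $d(A)$ is even, while $\beta(A)\neq 0$ requires $5$ or $6$.

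An odd-$7$-edge-connected graph may have $2$-edge-cuts, and then it does not satisfy the hypothesis. You must remove them first. For example, in a minimal counterexample, contract one edge $e_1$ of a $2$-edge-cut $\{e_1,e_2\}$. Then $G/e_1$ is still odd-$7$-edge-connected. Its modulo $3$-orientation extends to $G$ by orienting $e_1$ across the cut opposite to $e_2$, which balances both ends of $e_1$.

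For the root, a vertex of even degree at least $6$ does not qualify. The clean choice is to subdivide an edge $xy$ by a new vertex $z_0$ with $\beta(z_0)=0$ and preorientation $x\to z_0\to y$. This leaves every cut condition intact.
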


Here a graph is odd-$t$-edge-connected if every edge-cut of odd
size has at least $t$ edges. In this paper, an edge-cut is  an inclusion minimal edge set whose removal  increases the number of connected components; such a set is also called a bond in the literature. Clearly, a $4$-edge-connected graph
with no edge-cut of size $5$ is odd-$7$-edge-connected, and hence has a
nowhere-zero $3$-flow by \Cref{thm:ltwz}.  Thus the first case of
Tutte's conjecture not covered by Theorem~\ref{thm:ltwz} naturally
arises when $5$-edge-cuts are present.  This leads to the following
question: if edge-cuts of size $5$ are permitted, can the conjecture be
verified by excluding edge-cuts in a finite interval immediately above
$5$?

Our first main result provides an affirmative answer and identifies an
explicit interval of forbidden cut sizes.

\begin{theorem}\label{thm:forbidden-cuts}
Every $4$-edge-connected graph without an edge-cut of any size from
$6$ to $40$ admits a nowhere-zero $3$-flow.
\end{theorem}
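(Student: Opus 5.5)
We argue by a minimal counterexample and use the partial flow-extension machinery of Lov\'{a}sz, Thomassen, Wu and Zhang behind \Cref{thm:ltwz}. By Kochol's reduction we may not assume $5$-edge-connectivity, since $5$-cuts are exactly what we must handle, but we do take $G$ with $|V(G)|+|E(G)|$ minimum. We would prove a stronger, inductive statement on graphs $G$ with a $\Z_3$-boundary $\beta$ and a prescribed orientation of the edges at one vertex $z_0$. The hypothesis on edge-cuts is phrased on nontrivial cuts:
\begin{enumerate}[(i)]
\item every edge-cut has size at least $4$, except possibly the cut around $z_0$;
\item no \emph{nontrivial} edge-cut has size in $\{6,\dots,40\}$.
\end{enumerate}
Trivial cuts of any size are harmless for the induction, since vertex degrees are then unrestricted. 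This is the stronger form promised in the abstract.

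The key structural step is to analyse the $5$-edge-cuts. Take a $5$-edge-cut $\delta(X)$ with $X$ minimal and $z_0\notin X$. Contract $X^c$ to a single vertex $x$ to get $G/X^c$, and contract $X$ to get $G/X$. Both inherit the hypotheses: any nontrivial cut of $G/X$ or $G/X^c$ is a nontrivial cut of $G$. We solve $G/X$ first by minimality. This fixes the orientation of the five cut edges, and hence a boundary value at $x$. We then need $G/X^c$ to extend \emph{every} admissible prescription on those five edges. We plan to show this as a lemma: a graph with one degree-$5$ vertex $x$ and all other cuts either trivial, of size $4$ or $5$ only near $x$, or of size at least $41$, is ``$\Z_3$-extendable from $x$''.

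The extension lemma is proved by the LTWZ lifting and splitting technique. We lift pairs of edges at high-degree vertices and contract $\Z_3$-connected pieces. Once every nontrivial cut avoiding $x$ has size at least $41$, we obtain a large reservoir of edge-disjoint structure. We would use Nash-Williams's tree packing, at least $20$ edge-disjoint spanning trees inside each piece, to show the core is $\Z_3$-connected. Being $\Z_3$-connected, it absorbs any boundary. The cuts of size $4$ and $5$ are then peeled off one at a time: each such cut separates a side that is either a single vertex or again reducible by the same lemma. The threshold $40$ arises from the counting in this step. The odd-$7$ hypothesis in LTWZ is relaxed to a condition allowing small cuts only near $x$. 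The precise margin, roughly $40\approx 6\cdot 7-2$ or an analogous bound, comes from requiring that after removing the boundary contribution of up to five $5$-cuts and lifting, every remaining odd cut still has size at least $7$. This lets us invoke \Cref{thm:ltwz} in its strong extension form on the core.

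The main obstacle is the interaction of several overlapping $5$-cuts: the sides $X$ may cross. The first thing to try is uncrossing. If two minimal $5$-cuts $\delta(X)$ and $\delta(Y)$ cross, submodularity gives
\[
|\delta(X\cap Y)|+|\delta(X\cup Y)|\le 10 .
\]
Combined with the absence of cuts of sizes $6$ to $40$, this forces the corners to have cuts of size at most $5$, or to be trivial. This gives a laminar family of $5$-cuts, over which we induct. The subtle point is ensuring the contracted graphs still have no \emph{nontrivial} cuts of forbidden size. The large gap $6$--$40$ is used exactly here: any new cut created by the reduction has size either at most $5$ plus a bounded error, or at least $41$ minus a bounded error, and the gap is wide enough to absorb both errors.
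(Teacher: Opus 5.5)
Your outer reduction is the same as the paper's. You take a nontrivial small cut with a minimal side, solve the quotient containing the root by induction, and then extend the induced valid orientation from the contracted vertex $x$ (of degree at most $5$) into the minimal side. Minimality makes that quotient $4$-edge-connected and essentially $41$-edge-connected, and $G/X^c-x$ is automatically $2$-edge-connected because $d(x)\le 5$. You only treat $5$-cuts, so nontrivial $4$-cuts need the same handling, and no uncrossing is needed once the side is chosen minimal.

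\textbf{The gap.} The whole content of the theorem is the extension lemma for that quotient, and the argument you sketch for it fails. The faulty premise is that ``trivial cuts of any size are harmless''. The hypothesis allows vertices of degree $4$ and $5$ anywhere in the graph, not only near $x$, and they are exactly the obstruction.
\begin{itemize}
\item With $\beta=0$, the LTWZ extension hypothesis $d(A)\ge 4+|\tau(A)|$ forces $\tau=\pm3$, hence $d(A)\ge 7$, for every odd set $A$ avoiding the root. So every degree-$5$ vertex violates it; this is why \Cref{thm:ltwz} does not already give the statement. Such vertices cannot be lifted away, since their degree is odd.
\item ``At least $20$ edge-disjoint spanning trees'' is impossible in any graph containing a vertex of degree $5$.
\item If you mean a core of high-degree vertices: by \Cref{prop:degree-sum} the low-degree vertices are independent, but the high-degree vertices may be pairwise nonadjacent. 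All their connectivity can run through degree-$5$ vertices, so the core can be edgeless and there is nothing to pack. Peeling off degree-$5$ vertices presupposes a $\Z_3$-connected subgraph containing all high-degree vertices, and that is precisely what must be proved.
\end{itemize}
The estimate $40\approx 6\cdot 7-2$ is not tied to any actual argument.

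Relatedly, your inductive statement is too strong if it quantifies over all $\Z_3$-boundaries $\beta$ with hypotheses independent of $\beta$. Any root of degree $\ge 2$ can realize every residue, so it would imply that every $4$-edge-connected, essentially $41$-edge-connected graph is $\Z_3$-connected. The paper lists that as an open problem, and it works with $\beta=0$ throughout.

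\textbf{What is missing} is a mechanism that handles many independent degree-$5$ vertices attached to few high-degree vertices. The paper supplies it in the Extension Theorem (\Cref{thm:extension}), with root degree up to $42$ and $G-z_0$ $2$-edge-connected, via a minimal counterexample:
\begin{itemize}
\item reduce to $G-z_0$ being $\Z_3$-reduced;
\item raise $d(z_0)$ to $42$;
\item show cuts with both sides of size $\ge 3$ have size $\ge 43$;
\item lift away degree-$4$ vertices and show $G-z_0$ is $3$-edge-connected;
\item combine the density bound $|E|\le 4n-11$ for $\Z_3$-reduced graphs (\Cref{thm:z3-density}) with \Cref{prop:degree-sum} to bound the number of vertices of degree $\ge 36$ in terms of the number of vertices of degree $5$--$7$;
\item apply the common-neighbour count of \Cref{lem:common-neighbors} to force $K_{4,4}$ or $K_{4,4}^{-}$ in $G-z_0$, contradicting $\Z_3$-reducedness.
\end{itemize}
The constant $40$ comes out of this counting.

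The appendix gives a route closer to your instinct. After Mader-lifting the degree-$4$ vertices, a Nash-Williams--Tutte partition count yields four (not twenty) edge-disjoint spanning trees. The key point is that each independent degree-$5$ vertex contributes $5\ge 4$ crossing edges. The Han--Lai--Li theorem then gives $\Z_3$-connectivity.
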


The upper endpoint $40$ is an absolute constant and is not asserted to
be optimal.  Moreover, the proof yields a stronger structural
form in which only essential edge-cuts in this interval need to be
excluded, while trivial edge-cuts are allowed.  We use the following standard terminology. A component is nontrivial if it contains an edge.
An edge-cut $F$ of a connected graph $G$ is essential if
$G-F$ has at least two nontrivial components.  The graph $G$ is
essentially $t$-edge-connected if it has no essential edge-cut
with fewer than $t$ edges.  Equivalently, deleting any set of at most
$t-1$ edges leaves at most one nontrivial component.  Thus essential
edge-connectivity controls separations between edge-containing nontrivial
subgraphs while allowing small cuts that only separate trivial components. 

In analogy to Jaeger's weak $3$-flow conjecture, we seek a constant $k$ such that every  $4$-edge-connected essentially $k$-edge-connected graph admits a nowhere-zero $3$-flow. Our next theorem shows that $k=41$ suffices.

\begin{theorem}\label{thm-3-flow-thm}
    Every $4$-edge-connected essentially $41$-edge-connected graph admits a nowhere-zero $3$-flow.
\end{theorem}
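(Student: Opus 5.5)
Our plan is to reduce \Cref{thm-3-flow-thm} to a partial-flow-extension statement in the style of Lov\'{a}sz--Thomassen--Wu--Zhang and to handle the low-degree vertices separately. Let $G$ be a counterexample with $|V(G)|+|E(G)|$ minimum. Since $G$ is $4$-edge-connected and essentially $41$-edge-connected, every edge-cut of size at most $40$ is trivial, i.e.\ isolates a single vertex. So all small cuts are stars $\delta(v)$ with $4\le d(v)\le 40$. We call such vertices \emph{small} and all others \emph{big}. The set $S$ of small vertices is independent unless $G$ is tiny, since two adjacent small vertices $u,v$ would make $\delta(\{u,v\})$ an essential cut of size at most $78$. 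That bound is too large, so we first refine this. If $u,v$ are adjacent small vertices and $G-\{u,v\}$ is nontrivial, then $\delta(\{u,v\})$ is essential, hence has at least $41$ edges; this only gives $d(u)+d(v)\ge 41+2\mu(uv)$. So we do not expect independence. Instead we use that every set $X$ with $|X|\ge2$ and $|V\setminus X|\ge2$, both sides containing an edge, satisfies $d(X)\ge 41$.

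The main step is to prove an extension lemma, in the spirit of the LTWZ theorem, in a form adapted to this hypothesis. Fix $z_0$ and a prescribed $\mathbb{Z}_3$-boundary $\beta$. The lemma states that a pre-orientation of the edges at $z_0$ extends to a $\beta$-orientation whenever $d(z_0)$ is bounded, every nontrivial cut $X$ avoiding $z_0$ has $d(X)$ large, and every vertex $v\neq z_0$ with small degree satisfies the usual parity/size condition $d(v)\ge 4+|\tau(v)|$. We would prove it by the standard LTWZ reductions:
\begin{enumerate}[(i)]
\item lift pairs of edges at big vertices (Mader splitting preserves local edge-connectivity between other vertices, so the essential connectivity outside $z_0$ survives up to an additive loss);
\item contract a nontrivial tight cut and apply induction to both sides, gluing the orientations along the cut;
\item delete or orient edges at $z_0$ one at a time.
\end{enumerate}
The threshold $41$ should come out of the accounting in (ii). When we contract one side of a cut of size at least $41$, the new vertex has to be treated as a big vertex. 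This is affordable because LTWZ need only about $4k-3$ for odd-$k$ conditions with mod-$3$ boundaries, and here each contracted side also absorbs boundary corrections. We would compute the constant as $41=$ (bound on $d(z_0)$) $+$ slack for the correction terms of up to $\pm 2$ per vertex.

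To apply the lemma to $G$, choose $z_0$ to be a vertex of small degree, or any vertex if $G$ has none. A small vertex of degree $4$ or $5$ alone cannot be controlled by odd-$7$ arguments. Trivial cuts, however, only need the degree condition at a single vertex, and a degree-$4$ or degree-$5$ vertex admits a mod-$3$ orientation at itself for any boundary consistent with parity. So in the lemma we allow the low-degree vertices to be simply \emph{free} vertices, whose constraint is local: a vertex of degree $d\ge 4$ can realize every $\mathbb{Z}_3$-boundary using the choice of orientations of its incident edges, unless all of its edges are already forced. Forcing happens only through contractions, and those produce big vertices. Therefore the induction never creates small vertices that are fully constrained.

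We expect the main obstacle to be step (ii) when the tight cut $X$ is near the threshold, with $d(X)$ between $41$ and roughly $45$. After contracting $V\setminus X$ to a vertex playing the role of $z_0$, its degree may exceed the allowed bound on $d(z_0)$. There we would first try the LTWZ trick of choosing the cut minimal and applying the lemma to the side not containing $z_0$ with a new root of degree exactly $d(X)$. For this the bound on the root degree in the lemma must itself be about $41$, which is consistent with (and presumably the source of) the constant in \Cref{thm-3-flow-thm}.
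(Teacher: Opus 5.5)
There is a genuine gap, and it sits exactly at the degree-$5$ vertices, which carry the whole content of the theorem. Every edge-cut of size at most $40$ is trivial, so the permitted $5$-edge-cuts are precisely the stars of degree-$5$ vertices, and there can be arbitrarily many of them besides the root. Your LTWZ-type lemma requires $d(v)\ge 4+|\tau(v)|$ at such a vertex. With $\beta(v)=0$ and $d(v)$ odd one has $\tau(v)=3$, so this demands $d(v)\ge 7$, and the lemma does not apply.

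Your repair, declaring degree-$4$ and degree-$5$ vertices ``free'' because each can realize every residue modulo $3$ with its own edges, is not an argument. Every vertex of degree at least $2$ has this local property. It says nothing about whether the choices at $v$ are compatible with the constraints at the other ends of its edges, whose own constraints are then perturbed. None of your reductions (i)--(iii) says what happens to these vertices. The critical configuration is many degree-$5$ vertices whose neighbours all lie among a few huge vertices, so that the big vertices interact almost only through them; that is where all the difficulty is.

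As a sanity check, your lemma with an arbitrary prescribed $\beta$ and free low-degree vertices would show that every $4$-edge-connected essentially $41$-edge-connected graph is $\mathbb{Z}_3$-connected. Indeed, root at any vertex of degree at most $40$, or use the $6$-edge-connected theorem if there is none. That statement is posed as an open problem at the end of the paper. The constant $41$ is also asserted rather than derived, and the near-threshold tight cuts you flag as the main obstacle are the easy part.

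Rooting at a low-degree vertex is also the paper's first step, but what your plan lacks is a global mechanism that produces a contractible configuration. The paper proves a rooted extension theorem allowing root degree up to $42$, by a minimum counterexample. In that counterexample:
\begin{itemize}
\item $G-z_0$ is $\mathbb{Z}_3$-reduced and $d(z_0)=42$;
\item essential cuts with both shores of order at least $3$ have size at least $43$, because near-threshold cuts are simply contracted on each side, which is why the root bound is $k+1$;
\item degree-$4$ vertices are removed by complete lifting;
\item $G-z_0$ is $3$-edge-connected.
\end{itemize}
The density theorem $|E|\le 4n-11$ for $\mathbb{Z}_3$-reduced graphs then keeps the average degree of $G-z_0$ below $8$. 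Meanwhile $d(u)+d(v)\ge 43$ for adjacent $u,v$, which you essentially derived and then set aside, forces every vertex of degree $5$, $6$ or $7$ to have all its neighbours of degree at least $36$. Balancing these gives many low-degree vertices but at most seven high-degree ones in $G-z_0$. The common-neighbour counting lemma then yields a copy of $K_{4,4}$ or $K_{4,4}^{-}$, contradicting $\mathbb{Z}_3$-reducedness; the value $41$ is what makes this counting close. Your plan has no counterpart of the density bound or of these $\mathbb{Z}_3$-connected configurations, and without some such ingredient there is no proof for the degree-$5$ vertices.
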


We remark that the $4$-edge-connectivity assumption in
Theorems \ref{thm:forbidden-cuts} and \ref{thm-3-flow-thm} cannot be replaced by
$3$-edge-connectivity. To see this, for $t\ge 4$, let $K_{3,t}^{+}$ be obtained from the complete bipartite graph $K_{3,t}$  by adding an edge between
two vertices of degree $t$ in $K_{3,t}$.  Then $K_{3,t}^{+}$ is $3$-edge-connected and essentially $(t+1)$-edge-connected, and it has no edge-cut of any size from $4$ to $t-1$; however, it does not admit a nowhere-zero $3$-flow (see \cite{LiMaShiWangWu2022,HanLaiLi2018}).

Both Theorems \ref{thm:forbidden-cuts} and \ref{thm-3-flow-thm} are derived by means of the following technical flow extension theorem. This theorem greatly simplifies the treatment of contractible configurations in the inductive argument. A preorientation at a vertex $z_0$ is an orientation of all edges incident with $z_0$. A preorientation at a vertex is called
\emph{valid} if its imbalance is zero modulo $3$.

\begin{theorem}[Extension Theorem]\label{thm:extension}
Let $k=41$. Assume that $G$ is a $4$-edge-connected, essentially $k$-edge-connected graph, and let $z_0\in V(G)$. A preorientation at $z_0$ extends to a modulo $3$-orientation of the entire graph $G$ if the following conditions hold:
\begin{enumerate}[(i)]
        \item $d(z_{0})\leq k+1$ and the preorientation at $z_0$ is valid (i.e., $d^{+}(z_{0})-d^{-}(z_{0})\equiv 0\pmod{3}$);\label{cond1}
        \item $G-z_{0}$ is $2$-edge-connected.\label{cond2}
    \end{enumerate}
\end{theorem}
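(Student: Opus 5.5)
We argue by contradiction, taking a counterexample $(G,z_0)$ with $|V(G)|+|E(G)|$ minimum. The approach follows the partial-orientation extension method of Lov\'{a}sz--Thomassen--Wu--Zhang (the proof of \Cref{thm:ltwz}), with two twists. First, the degree bound $d(z_0)\le k+1$ replaces their odd-edge-connectivity bound. Second, essential $k$-edge-connectivity is used to control small cuts: since $G$ is essentially $41$-edge-connected, every edge-cut of size at most $40$ has a single vertex on one side. The plan has three stages. We first establish reducibility properties of a minimal counterexample, then reduce to a setting where a general LTWZ-type lemma applies, and finally handle the remaining obstruction of small trivial cuts.

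\textbf{Step 1 (trivial cases and small cuts).} If $|V(G)|\le 2$, a valid preorientation at $z_0$ already gives a modulo $3$-orientation, because the other vertex then has the negated imbalance. Next consider an edge-cut $\partial(X)$ with $z_0\notin X$, $|X|\ge2$ and $|\partial X|\le k+1$ (beyond $41$ we cannot exclude such cuts). We contract $X$ to a vertex $x$ and orient the contracted graph by minimality. The contracted graph inherits $4$-edge-connectivity and essential $k$-connectivity; we must also check condition (ii), that $G/X-z_0$ is $2$-edge-connected. We then contract $\overline X$ to a new root $z_0'$ whose preorientation is induced from that orientation. This preorientation is valid, has degree $|\partial X|\le k+1$, and $G[X]$ with the root satisfies (ii) by the $2$-edge-connectivity of $G-z_0$ together with $4$-edge-connectivity. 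Minimality then extends it. Essential connectivity forbids such $X$ unless $|\partial X|\in\{k,k+1\}$ or $X$ is nontrivially large. Hence every nontrivial cut not separating off $z_0$ alone has size at least $k+2$.

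\textbf{Step 2 (lifting and contraction at $z_0$).} If $d(z_0)$ is small or $z_0$ has parallel edges to some vertex $v$, we lift a pair of opposite edges at $z_0$ or contract a suitable edge. Since $k$ is large, the resulting graph remains $4$-edge-connected and essentially $k$-edge-connected. We also show that $G-z_0$ is in fact $3$-edge-connected, or reduce otherwise: a $2$-edge-cut in $G-z_0$ with both sides nontrivial yields a cut of $G$ of size at most $2+d(z_0)\le k+3$, which Step 1 forces to be tight or trivial. The trivial case, a single vertex $v$ with $d_{G-z_0}(v)=2$, is handled by orienting the edges at $v$ directly and deleting $v$, and then restoring $2$-edge-connectivity.

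\textbf{Step 3 (the main obstacle).} In the remaining configuration, the graph $G-z_0$ is highly connected across all nontrivial cuts, but vertices of degree $4$ or $5$ may remain (trivial cuts are allowed). We would apply the LTWZ strengthened statement to $G-z_0$, with prescribed boundary values $\beta$ coming from the preorientation. Their hypothesis requires $d(v)\ge 4+|\beta(v)|$-type conditions, and on every nontrivial cut $d(X)\ge 4+|\beta(X)|$ or odd-size bounds. Nontrivial cuts satisfy these comfortably, since they have size at least $k+2-d(z_0)$ counted suitably. The hard part is single low-degree vertices, where the required inequality can fail and splitting off a pair at a degree-$5$ vertex may break the $4$-edge-connectivity needed by induction. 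I would first attempt to split off at each such vertex using Mader/Zhang-type splitting that preserves essential connectivity. Failing that, I would choose the root of the LTWZ induction to be a high-degree vertex and verify that the budget $k=41$ absorbs the at most $k+1$ prescribed edges when counting the inequality $|\partial X|\ge 2|\beta|$-style bound for every $X$ with $|X|\ge 2$. The constant $41$ should emerge from exactly this counting.
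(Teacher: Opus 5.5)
Your proposal has a genuine gap at Step 3, which is the heart of the theorem. What you give there is a plan (``I would first attempt \dots\ failing that \dots''), not an argument, and the plan cannot work.

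\textbf{Why the LTWZ route fails.} The LTWZ strengthened theorem breaks down \emph{locally}, at single vertices.
\begin{itemize}
\item Take $v\neq z_0$ of degree $5$ not adjacent to the root. Then $\beta(v)=0$ and $d(v)$ is odd, so $|\tau(v)|=3$, and their hypothesis $d(v)\ge 4+|\tau(v)|$ demands $d(v)\ge 7$. No choice of $k$ and no global count of the $k+1$ root edges repairs a failure at one vertex.
\item Their root must have degree at most $4+|\tau|\le 7$, so a high-degree root is not available.
\item A degree-$5$ vertex cannot be completely split off, and a partial split leaves a $3$-edge-cut.
\end{itemize}
Degree-$5$ vertices are exactly what the theorem must allow, since that is the case odd-$7$-edge-connectivity does not cover.

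\textbf{What the paper does instead.} It does not use LTWZ at all.
\begin{itemize}
\item It first contracts every nontrivial $\mathbb{Z}_3$-connected subgraph of $G-z_0$ (the counterexample minimizes $|E(G-z_0)|$). So $G-z_0$ is $\mathbb{Z}_3$-reduced and simple. This reduction is absent from your plan, and it is where the final contradiction comes from.
\item It lifts away all degree-$4$ vertices and shows $G-z_0$ is $3$-edge-connected.
\item It then combines three facts: the density bound $|E(G-z_0)|\le 4(n-1)-11$ (Theorem~\ref{thm:z3-density}); the degree-sum bound $d(u)+d(v)\ge k+2$ for adjacent vertices (Proposition~\ref{prop:degree-sum}), so vertices of degree $5,6,7$ only see vertices of degree at least $k-5$; and $d(z_0)\le k+1$. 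Together these force many low-degree vertices to attach to at most a handful of high-degree vertices of $G-z_0$.
\item The common-neighbour count of Lemma~\ref{lem:common-neighbors} then yields a $K_{4,4}$ or $K_{4,4}^{-}$. This subgraph is $\mathbb{Z}_3$-connected, contradicting reducedness.
\end{itemize}
The constant $41$ is dictated by this counting.

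\textbf{Step 1 also overclaims.} For the root-side instance $G/\overline{X}$ you need $G[X]$ to be $2$-edge-connected. This does not follow from $2$-edge-connectivity of $G-z_0$ plus $4$-edge-connectivity. It needs $|\partial X|\le k+1$ together with essential $k$-edge-connectivity applied to the two shores of a putative bridge of $G[X]$, and that argument requires $|X|\ge 3$.

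If $X=\{u,v\}$ with a single edge $uv$, deleting the new root leaves a single edge, so condition (ii) fails. A valid preorientation then genuinely need not extend: it fails whenever the root edges at $u$ already contribute $0$ modulo $3$. So you cannot conclude that every nontrivial cut has size at least $k+2$. The paper proves this only when $\min\{|X|,|X^c|\}\ge 3$. It handles two-vertex shores by separate arguments, for instance in the degree-$4$ lifting claim, using $|V(G)|\ge 6$.
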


We shall prove \Cref{thm:extension} by using a minimum counterexample argument. We first establish some basic properties of this counterexample $G$. Then we try to completely lift all possible vertices of degree $4$ to show $G$ has minimum degree at least $5$. Then we use some density bound and some counting argument to  guarantee the existence of contractible configurations in $G-z_0$, proving the theorem.

The remainder of this paper is organized as follows. Section~\ref{sec:preliminary} collects the notation and preliminary results used in the proof, and then we  present the proof of Theorems \ref{thm:forbidden-cuts} and \ref{thm-3-flow-thm} with the assistance of \Cref{thm:extension}. Section \ref{sec:pf1.5} proves \cref{thm:extension} from which the main results follow. Section \ref{sec4} discusses some possible improvements and open problems. Appendix \ref{appendix-20-23} provides improved bounds through different methods.

\section{Preliminaries}
\label{sec:preliminary}

For disjoint vertex sets $X,Y\subseteq V(G)$, let $E_G(X,Y)$ denote the set of edges with one end in $X$ and the other in $Y$, and let $e_G(X,Y)=|E_G(X,Y)|$. We use $e_G(x,y)$ when $X=\{x\}$ and $Y=\{y\}$. For $X\subseteq V(G)$, let
\[
 d_G(X)=e_G(X,V(G)\setminus X).
\]
In particular, $d_G(v)=d_G(\{v\})$. We omit the subscript $G$ when the graph is clear. For a nonempty set $X\subseteq V(G)$, let $G/X$ denote the graph obtained from $G$ by identifying all vertices of $X$ as one vertex and deleting the resulting loops. All parallel edges are retained. If $H$ is a subgraph of $G$, then $G/H$ denotes $G/V(H)$. For a positive integer $t$, the notation $tK_2$ denotes the graph on two vertices with $t$ parallel edges.

A mapping $\beta:V(G)\to\Z_3$ is a \emph{$\Z_3$-boundary} if
\[
 \sum_{v\in V(G)}\beta(v)=0\quad\text{in }\Z_3.
\]
An orientation $D$ of $G$ is a \emph{$\beta$-orientation} if
\[
 d_D^+(v)-d_D^-(v)=\beta(v)\quad\text{in }\Z_3
 \qquad\text{for every }v\in V(G).
\]
The graph $G$ is \emph{$\Z_3$-connected} if it has a $\beta$-orientation for every $\Z_3$-boundary $\beta$. The concept of group connectivity was introduced by Jaeger, Linial, Payan, and Tarsi~\cite{JaegerLinialPayanTarsi1992}, serving as contractible configurations for nowhere-zero flows. In particular, every $\Z_3$-connected graph admits a modulo $3$-orientation. A crucial ingredient of group connectivity is the following proposition.

\begin{proposition}\label{prop-extension} (see \cite{Lai2000, LovaszThomassenWuZhang2013})
   Let $G$ be a graph with a  $\Z_3$-connected subgraph $H$. Let $\beta$ be a $\Z_3$-boundary of $G$. Denote $G'=G/H$ and let $w$ be the vertex of $G'$ formed by contracting $H$. Define $\beta'$ for $G'$ as $\beta'(v)=\beta(v)$ for each $v\in V(G')\setminus\{w\}$, and $\beta'(w)=\sum_{v\in V(H)} \beta(v)$.  Then any $\beta'$-orientation of $G'$ can be extended to a $\beta$-orientation of $G$. Moreover, $G$ has a modulo $3$-orientation if and only if $G/H$ does; similarly, $G$ is $\Z_3$-connected if and only if $G/H$ is.
\end{proposition}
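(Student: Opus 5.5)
The statement is \Cref{prop-extension}, which is cited from the literature; a proof plan follows.

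Fix a $\beta'$-orientation $D'$ of $G'=G/H$. The edges of $G'$ are exactly the edges of $G$ not in $E(H)$ and not lying inside $V(H)$. Here $H$ is a subgraph, so edges of $G$ joining two vertices of $H$ but not belonging to $E(H)$ become loops in $G/H$ and are deleted.

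\emph{Extending the orientation.} I would orient every edge of $G$ that has at most one end in $V(H)$ as in $D'$. Each edge with both ends in $V(H)$ but not in $E(H)$ gets an arbitrary orientation. For $v\in V(H)$, let $c(v)$ be the net outdegree of $v$ over all these already-oriented edges, computed modulo $3$. Define $\gamma(v)=\beta(v)-c(v)$ on $V(H)$.

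\emph{Checking that $\gamma$ is a boundary of $H$.} Sum $c(v)$ over $V(H)$. Each edge with both ends in $V(H)$ contributes $+1-1=0$. Each edge with exactly one end in $V(H)$ contributes exactly its contribution to the net outdegree of $w$ in $D'$. Hence $\sum_{v\in V(H)} c(v)=\beta'(w)=\sum_{v\in V(H)}\beta(v)$, so $\sum_{v\in V(H)}\gamma(v)=0$.

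Since $H$ is $\Z_3$-connected, it has a $\gamma$-orientation. Combining it with the orientation above gives net outdegree $\beta(v)$ at every $v\in V(H)$. Vertices outside $V(H)$ keep their $D'$ net outdegree, which is $\beta'(v)=\beta(v)$. This proves the first assertion. The only point needing care is the bookkeeping of edges inside $V(H)$ but outside $E(H)$, and that bookkeeping is handled above.

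\emph{The equivalences.} For the modulo $3$-orientation statement, take $\beta\equiv 0$; then $\beta'\equiv 0$. The extension just proved shows that a modulo $3$-orientation of $G/H$ extends to one of $G$. Conversely, given a modulo $3$-orientation of $G$, restrict it to the edges of $G/H$. Every vertex other than $w$ keeps net outdegree $0$. At $w$, the net outdegree equals the sum over $v\in V(H)$ of the $G$-net outdegrees, since edges internal to $V(H)$ cancel, so it is also $0$.

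For $\Z_3$-connectivity, first suppose $G/H$ is $\Z_3$-connected. Any boundary $\beta$ of $G$ induces a boundary $\beta'$ of $G/H$, which has a $\beta'$-orientation, and this extends to a $\beta$-orientation of $G$. Conversely, suppose $G$ is $\Z_3$-connected and let $\beta'$ be a boundary of $G/H$. Lift it to a boundary $\beta$ of $G$ by putting $\beta'(w)$ on one vertex of $H$ and $0$ on the other vertices of $H$. A $\beta$-orientation of $G$, restricted to the edges of $G/H$, is a $\beta'$-orientation by the same cancellation argument.
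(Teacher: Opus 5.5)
Your proof is correct. You orient the edges of $G/H$ as in $D'$ and orient the edges inside $V(H)$ but outside $E(H)$ arbitrarily. You then check that the residual function $\gamma$ is a $\Z_3$-boundary of $H$: internal edges cancel, and the crossing edges reproduce the net outdegree $\beta'(w)$ of $w$. Finally, you realize $\gamma$ using the $\Z_3$-connectivity of $H$, and the two equivalences follow from the same cancellation. This is the standard argument. The paper itself gives no proof of this proposition and only cites Lai and Lov\'{a}sz--Thomassen--Wu--Zhang, so there is no in-paper proof to compare against.
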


\subsection{Some fundamental results}

Let us recall some fundamental results. 

\begin{proposition}\label{lem:basic-facts}
The following statements hold.
\begin{enumerate}[(i)]
 \item For every integer $t\geq2$, the graph $tK_2$ is $\Z_3$-connected.
 \item(see~\cite{ChenEschenLai2008,ZhangYin2013}) The graphs $K_{4,4}$ and $K_{4,4}^{-}$ are $\Z_3$-connected, where $K_{4,4}^{-}$ is obtained from $K_{4,4}$ by deleting one edge.
\end{enumerate}
\end{proposition}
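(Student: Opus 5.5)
For part (i) I will argue directly from the definition. Let the two vertices of $tK_2$ be $u$ and $v$, and let $\beta$ be any $\Z_3$-boundary, so that $\beta(v)=-\beta(u)$. If exactly $a$ of the $t$ parallel edges are oriented away from $u$, then $d^+(u)-d^-(u)=a-(t-a)=2a-t$. Since $t\ge 2$, the integer $a$ can take the three consecutive values $0,1,2$. Because $2$ is invertible in $\Z_3$, the quantity $2a-t$ then runs through all three residues modulo $3$. So some choice of $a$ gives $2a-t\equiv \beta(u)\pmod 3$. The condition at $v$ is then automatic, since the imbalances of the two vertices sum to zero, as do the values of $\beta$. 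Hence every boundary is realized and $tK_2$ is $\Z_3$-connected.

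For part (ii), the statement is quoted from \cite{ChenEschenLai2008,ZhangYin2013}, and I would simply invoke it. If a self-contained argument were wanted, I would first reduce to $K_{4,4}^{-}$ via the following monotonicity observation.

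\emph{If a spanning subgraph $H$ of $G$ is $\Z_3$-connected, then so is $G$.} Given a boundary $\beta$ of $G$, orient each edge of $E(G)\setminus E(H)$ arbitrarily. Subtract the resulting imbalances from $\beta$ to obtain a new boundary $\beta'$ on $H$; it still sums to zero. Then take a $\beta'$-orientation of $H$.

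Applying this with $H=K_{4,4}^{-}$ and $G=K_{4,4}$ reduces everything to showing that $K_{4,4}^{-}$ is $\Z_3$-connected.

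For that, I would fix a boundary $\beta$ and try to exhibit a $\beta$-orientation directly. The natural route is to orient the edges at one vertex of degree $3$ so as to match its prescribed value. I would then delete that vertex and try to reduce the remainder, using Proposition~\ref{prop-extension}, to a graph built from copies of $tK_2$ with $t\ge 2$ glued by contractions. Such a graph is $\Z_3$-connected by part (i) together with the contraction statement in Proposition~\ref{prop-extension}.

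The main obstacle is that $K_{4,4}^{-}$ contains no parallel edges, so no $2K_2$ appears for free. One must first perform lifting or a careful choice of preorientation to create such structures. In practice the cleanest fallback is a finite check: there are only $3^{7}$ boundaries, which symmetry cuts down considerably, and each can be verified by hand or by computer. This is presumably why the paper cites the result rather than reproving it.
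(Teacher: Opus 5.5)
Your verification of (i) is correct; the paper states (i) without proof and simply cites \cite{ChenEschenLai2008,ZhangYin2013} for (ii), so your proposal, which invokes the same citation, takes the same approach. One caution about your optional sketch for (ii): deleting a degree-$3$ vertex of $K_{4,4}^{-}$ leaves $K_{3,4}$, which is not $\Z_3$-connected. To see this, take $\beta=0$ on the four degree-$3$ vertices and $\beta$ equal to $0,1,2$ on the other side; then each degree-$3$ vertex must be a source or a sink, so the three vertices on the other side all get the same imbalance, contradicting the distinct values. Hence, by Proposition~\ref{prop-extension}, that remainder can never be contracted away through $\Z_3$-connected pieces, and the citation, a lifting argument, or your finite check is the right fallback.
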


The following theorem is useful when the edge-connectivity  of the graph is large.

\begin{theorem}[Lov\'{a}sz--Thomassen--Wu--Zhang~\cite{LovaszThomassenWuZhang2013}]\label{thm:ltwz-six}
Every $6$-edge-connected graph is $\Z_3$-connected.
\end{theorem}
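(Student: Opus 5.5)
This theorem is quoted from Lov\'{a}sz--Thomassen--Wu--Zhang, so a proof here would only recall their argument in outline. I would derive it from their strengthened $\Z_3$-boundary extension theorem, which is the central tool of their paper. It concerns a graph $G$ with a $\Z_3$-boundary $\beta$ and a designated vertex $z_0$ whose incident edges are preoriented. A suitable weight $\tau(v)\in\{0,\pm1,\pm2,3\}$ is attached to each vertex, determined by $d(v)$ and $\beta(v)$: roughly, $\tau(v)\equiv\beta(v)\pmod 3$ and $\tau(v)\equiv d(v)\pmod 2$. Assume that $d(z_0)\le 4+|\tau(z_0)|$, that the preorientation at $z_0$ achieves $\beta(z_0)$, and that every cut $X$ avoiding $z_0$ with $|V(G)\setminus X|\ge 2$ satisfies $d(X)\ge 4+|\tau(X)|$. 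Under these hypotheses the preorientation extends to a $\beta$-orientation of $G$. Granting this result, the deduction is short. In a $6$-edge-connected graph every cut has $d(X)\ge 6\ge 4+|\tau(X)|$ whenever $|\tau(X)|\le 2$. The cut condition can fail only when $\tau(X)=3$, that is, $\beta(X)=0$ and $d(X)$ is odd; then we need $d(X)\ge 7$, which holds because $6$-edge-connectivity gives $d(X)\ge 6$ and $d(X)$ is odd.

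\textbf{Step 1 (setup).} Let $\beta$ be an arbitrary $\Z_3$-boundary of $G$. I need a $z_0$ and a compatible preorientation satisfying $d(z_0)\le 4+|\tau(z_0)|$. Since $d(z_0)$ may be large, I would first add a new vertex $z_0$ of degree at most $4+|\tau(z_0)|$ with $\beta(z_0)=0$. Alternatively, I would split off edges at a vertex, using Mader-type splitting to preserve the required edge-connectivity, until its degree is small.

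\textbf{Step 2 (main case).} The minimal-counterexample argument for the extension theorem itself is the main obstacle. Take a counterexample minimizing $|V|+|E|$ and proceed as follows.
\begin{enumerate}[(a)]
\item If some nontrivial cut is tight, contract each side in turn and glue the two orientations.
\item Otherwise, lift a pair of edges at $z_0$, or delete an edge.
\item Check in each case that the cut conditions survive, with $\tau$ shifted by at most $1$ across a cut.
\end{enumerate}
I expect the delicate part to be exactly this bookkeeping of $|\tau|$ parity under contraction and lifting. The precise choice of the values in $\{0,\pm1,\pm2,3\}$ is what makes the induction close, so I would calibrate $\tau$ until each case works.

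Once the extension theorem holds, $\beta$ was arbitrary, so every boundary is realized. Hence $G$ is $\Z_3$-connected, as \Cref{thm:ltwz-six} asserts.
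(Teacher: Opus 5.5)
Your route is the standard one. Deducing the statement from the rooted $\tau$-extension theorem (Theorem~3.1 of Lov\'asz--Thomassen--Wu--Zhang) is how the result is obtained, and the paper itself only cites it. Your cut check is correct: $d(X)\ge 6\ge 4+|\tau(X)|$ when $|\tau(X)|\le 2$, and when $\tau(X)=3$ the size $d(X)$ is odd, hence at least $7$.

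Step~1 is underspecified. You must verify the hypotheses in the augmented graph $G'$, with $\tau$ computed there, and you omitted the hypothesis $|V(G')|\ge 3$. A concrete choice is as follows. Join a new vertex $z_0$ to one vertex $v$ by three parallel edges, all directed away from $z_0$, set $\beta(z_0)=0$, and keep $\beta$ on $V(G)$. Then:
\begin{itemize}
\item the imbalance at $z_0$ is $3\equiv 0$, so $\tau(z_0)=3$ and $d(z_0)=3\le 7$;
\item $|V(G')|\ge 3$ whenever $|V(G)|\ge 2$;
\item the three edges contribute $-3\equiv 0$ at $v$, so the restriction of a $\beta$-orientation of $G'$ is a $\beta$-orientation of $G$;
\item every admissible $A$ is a nonempty proper subset of $V(G)$, so $d_{G'}(A)\ge d_G(A)\ge 6$, and your parity argument applies verbatim.
\end{itemize}

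Step~2 is not a proof, and the lifting move in (b) fails as stated. The theorem must extend the prescribed directions at $z_0$ exactly, not just the imbalance $\beta(z_0)$, because the gluing in (a) uses the actual orientation of the cut. Suppose you lift an in-edge $xz_0$ and an out-edge $z_0y$ with $x\ne y$ to a new edge $xy$. Then $xy$ is not a root edge, and nothing controls its direction in the smaller instance. If it returns as $y\to x$, undoing the lift reverses both prescribed edges.

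The direction-safe reduction is to delete a single root edge $z_0x$ with its direction and shift $\beta$ at $z_0$ and $x$. Condition (ii) survives, since $|\tau(z_0)|$ drops by at most $1$ while $d(z_0)$ drops by $1$. After (a), every nontrivial admissible cut has slack at least $2$, so only the singleton $\{x\}$ can fail. It fails exactly when $d(x)=4+|\tau(x)|$ and the shift increases $|\tau(x)|$.

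That leaves the configuration in which every root edge ends at such a tight vertex with the wrong direction. Handling it is the real work in proving the extension theorem, and ``calibrating $\tau$'' supplies no idea for it. Either cite Theorem~3.1 as a black box, as the paper does, or give a genuine argument for this case.
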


We say that a graph is \emph{$\Z_3$-reduced} if it contains no nontrivial $\Z_3$-connected subgraph. In our proof, the following density bound, due to~\cite{LiMaShiWangWu2022},  ensures the existence of sufficiently many vertices of large degree in a minimum counterexample.

\begin{theorem}[Li--Ma--Shi--Wang--Wu~\cite{LiMaShiWangWu2022}]\label{thm:z3-density}
Let $G$ be a connected $\Z_3$-reduced graph on $n\geq4$ vertices. Then
\[
 |E(G)|\leq4n-10.
\]
Equality holds only when $G\cong K_4$. Consequently, if $G\not\cong K_4$, then $|E(G)|\leq4n-11$.
\end{theorem}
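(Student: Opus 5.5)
Since the graph $2K_2$ is $\Z_3$-connected by \Cref{lem:basic-facts}(i), a $\Z_3$-reduced graph has no parallel edges. Moreover, by \Cref{thm:ltwz-six} and \Cref{prop-extension}, no subgraph $H$ of a $\Z_3$-reduced graph with $|V(H)|\ge 2$ can be $6$-edge-connected. So every such subgraph has an edge-cut of size at most $5$. I would prove \Cref{thm:z3-density} by induction on $n$, exploiting these small cuts.

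\textbf{Base case and bounds for small sets.} For $n=4$, a simple graph has at most $6=4n-10$ edges, with equality only for $K_4$. To apply induction to sides of a cut that may be small or disconnected, I would record the bound $g(m)$ on the number of edges of a $\Z_3$-reduced graph on $m$ vertices. For a simple graph, $g(1)=0$, $g(2)=1$ and $g(3)=3$, and for $m\ge4$ I take the inductive bound $4m-10$. In every case $g(m)\le 4m-4$. For disconnected sides, I would sum over components; each additional component only lowers the bound relative to $4m-10$.

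\textbf{Splitting along a small cut.} Let $G$ be a minimal counterexample, so $|E(G)|\ge 4n-9$. Since $G$ is not $6$-edge-connected, there is $X\subset V(G)$ with $d(X)\le 5$; let $Y=V\setminus X$. Subgraphs of $\Z_3$-reduced graphs are $\Z_3$-reduced, so induction applies to $G[X]$ and $G[Y]$. If $|X|,|Y|\ge 4$, then
\[
|E(G)|\le 4|X|-10+4|Y|-10+5=4n-15,
\]
which is a contradiction. If $2\le|X|\le3$, the small-set bounds still give $|E(G)|\le 4n-10$ or better, with equality impossible after checking the finitely many configurations. The only surviving case is when every cut of size at most $5$ is trivial, i.e.\ isolates a single vertex. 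If such a vertex has degree at most $4$, deleting it gives $|E(G)|\le 4(n-1)-10+4=4n-10$, again a contradiction.

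\textbf{Main obstacle: degree-$5$ vertices.} Deleting a vertex $v$ of degree $5$ yields only $|E(G)|\le 4n-9$, one edge too many. So the real work is the case where $G$ is essentially $6$-edge-connected and its small cuts are exactly the stars of degree-$5$ vertices. Here I would first try to strengthen the induction hypothesis so that degree-$5$ vertices are charged less. One option is a weighted statement for graphs with a prescribed boundary. Another is to lift a pair of edges at $v$, replacing $uv,vw$ by $uw$, chosen so that the resulting graph on $n-1$ vertices is still $\Z_3$-reduced. Lifting trades two edges for one, so the inductive bound would give $|E(G)|\le 4(n-1)-10+4$, which suffices. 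Showing that some lifting preserves $\Z_3$-reducedness would use \Cref{prop-extension}: if every lift created a $\Z_3$-connected subgraph, these subgraphs together with $v$ should assemble into a $\Z_3$-connected subgraph of $G$ itself. A further alternative is to use the absence of $6$-edge-connected subgraphs in $G-v$ iteratively and count.

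Finally, I would trace the equality cases through each step. Every step except the base case is strict, so $|E(G)|=4n-10$ forces $G\cong K_4$, and for $G\not\cong K_4$ we get $|E(G)|\le 4n-11$.
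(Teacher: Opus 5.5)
The paper does not prove this statement; it imports it from Li--Ma--Shi--Wang--Wu. Your argument therefore has to stand on its own, and it does not close. Your reductions are sound. A $\Z_3$-reduced graph is simple and has no $6$-edge-connected subgraph on at least two vertices. A cut of size at most $5$ with a nonsingleton side lets you split and induct, and a vertex of degree at most $4$ can be deleted. But look at what survives. A minimal counterexample is simple and $5$-edge-connected, every edge-cut of size at most $5$ isolates a vertex, and it has at least $4n-9$ edges. It must also contain a vertex of degree exactly $5$, since otherwise it would be $6$-edge-connected and hence $\Z_3$-connected. So the ``degree-$5$ obstacle'' is not a leftover case; it is the whole content of the theorem. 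The proposal only lists strategies for it (a strengthened hypothesis, a lifting, an iterated count) and carries none of them out.

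The lifting route, as stated, does not go through. You need $u,w\in N(v)$ with $(G-v)+uw$ still $\Z_3$-reduced. Any pair with $uw\in E(G)$ is useless because it creates $2K_2$, so when $N(v)$ is dense few pairs are available. The ``assembly'' step is exactly the missing idea. Suppose $H$ is a $\Z_3$-connected subgraph of $(G-v)+uw$ through $uw$. Pulling orientations back along the path $uvw$ does show that $H-uw$, together with $v$ and all edges from $v$ to $V(H)$, is a $\Z_3$-connected subgraph of $G$, provided $v$ has at least two neighbors in $V(H)$ besides $u$ and $w$. In that case lifting $uv,vw$ leaves $H$ plus a vertex with at least two edges into $H$, which is $\Z_3$-connected. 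But when $|N(v)\cap V(H)|\leq 3$, no such conclusion is available. Indeed, subdividing one edge of $2K_2$ already yields a triangle, which is not $\Z_3$-connected. Nothing in the proposal shows that some pair avoids this situation.

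A smaller inaccuracy: not every non-base step is strict. Deleting a degree-$4$ vertex gives exactly $4n-10$ when $G-v\cong K_4$, i.e.\ $G\cong K_5$. Excluding this equality case needs the separate fact that $K_5$ is $\Z_3$-connected, which you do not invoke.
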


The next proposition asserts that if a graph has sufficiently high essential edge-connectivity, then the sum of the degrees of any two adjacent vertices is large. 

\begin{proposition}\label{prop:degree-sum}
Assume that $G$ is a connected, essentially $k$-edge-connected graph with $|V(G)|\geq4$, and that the underlying simple graph of $G$ is not a star. If $u$ and $v$ are adjacent vertices of $G$, then
\[
 d_G(u)+d_G(v)\geq k+2.
\]
\end{proposition}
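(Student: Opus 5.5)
We consider the edge set $F=\delta(\{u,v\})$, that is, all edges with exactly one end in $\{u,v\}$. Since $u$ and $v$ are adjacent, we have
\[
|F|\le d_G(u)+d_G(v)-2e_G(u,v)\le d_G(u)+d_G(v)-2 .
\]
The plan is to show that $F$ contains an essential edge-cut of $G$. Then essential $k$-edge-connectivity gives $|F|\ge k$, and hence $d_G(u)+d_G(v)\ge k+2$.

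In $G-F$, the set $\{u,v\}$ induces a component containing the edge $uv$, so this component is nontrivial. It remains to find a second nontrivial component of $G-F$, i.e., an edge of $G-\{u,v\}$. Suppose instead that $G-\{u,v\}$ is edgeless. Then every edge of $G$ is incident with $u$ or $v$, so every edge other than those between $u$ and $v$ joins $\{u,v\}$ to an independent set $W=V(G)\setminus\{u,v\}$, with $|W|\ge 2$.

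In this situation we must obtain a contradiction with the remaining hypotheses. The intended route is to use the underlying simple graph being non-star: then both $u$ and $v$ have neighbours in $W$ other than each other. If some $w\in W$ is adjacent to both, consider the set $\{u,w\}$ against the rest. Alternatively, pick $w_1\in N(u)\cap W$ and $w_2\in N(v)\cap W$, and aim for an essential cut separating the edge $uw_1$ from the edge $vw_2$. This looks delicate: a double star with many leaves, or $K_{2,t}$ plus the edge $uv$, might genuinely have no essential cut. That would mean such graphs are vacuously essentially $k$-edge-connected for every $k$, which would contradict the claim.

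So this case is the main obstacle, and the statement presumably relies on the convention that a graph in which every edge meets $\{u,v\}$ is excluded. Such a graph can only be a star (if one of $u,v$ carries all edges) or such a "double star"; the proof should check how the paper's intended definition handles it. My first attempt will be to argue that in the edgeless case, minimality of cuts forces $G$ to be a star, contrary to assumption. If that fails, I will note that the proposition is only applied to graphs where $G-\{u,v\}$ has an edge, e.g.\ $4$-edge-connected counterexamples with $|V|$ large, and restrict to that case. In the main case, where some edge $xy$ lies in $G-\{u,v\}$, I will take the minimal edge-cut $F'\subseteq F$ separating the component of $G-F$ containing $uv$ from the component containing $xy$. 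This $F'$ is a bond whose two sides each contain an edge, so $F'$ is essential and $|F|\ge|F'|\ge k$, which completes the argument.
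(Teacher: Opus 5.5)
Your main case, where $G-\{u,v\}$ has an edge, is correct and matches the paper: $\delta(\{u,v\})$ has $d(u)+d(v)-2e(u,v)\le d(u)+d(v)-2$ edges, its deletion leaves two nontrivial components, and so it contains an essential edge-cut.

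\textbf{The gap is the case where $W=V(G)\setminus\{u,v\}$ is independent.} You leave this case unresolved, and your diagnosis of it is mistaken. The examples you worry about do have essential cuts. In a double star, $uv$ is a bridge whose removal leaves two nontrivial components, so it is an essential cut of size $1$. In $K_{2,t}$ plus $uv$, the boundary of $\{v,w_2\}$ is a bond of size $t+1$ with an edge on each side. So the statement is not in danger there. Your two fallbacks also fail. Arguing that this case forces $G$ to be a star cannot work, since $K_{2,t}+uv$ is a non-star graph in exactly this case. Restricting to graphs where $G-\{u,v\}$ has an edge proves a weaker statement than the one asked.

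\textbf{The missing step} is to use a different edge set from $\delta(\{u,v\})$, which is just your abandoned ``alternative'' carried out.

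First find two disjoint edges. Since $G$ is connected, every vertex of $W$ is adjacent to $u$ or $v$. Since the underlying simple graph is not a star, both $N(u)\cap W$ and $N(v)\cap W$ are nonempty. Since $|W|\ge 2$, they are not the same single vertex. Hence there are distinct $w_1,w_2\in W$ with $uw_1,vw_2\in E(G)$.

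Now delete every edge incident with $u$ or $v$ except one copy of $uw_1$ and one copy of $vw_2$. This deletes $d(u)+d(v)-e(u,v)-2\le d(u)+d(v)-3$ edges. Because $W$ is independent, $\{u,w_1\}$ and $\{v,w_2\}$ become two nontrivial components. By essential $k$-edge-connectivity, this deleted set must have at least $k$ edges. You can see this either from the equivalent form ``deleting at most $k-1$ edges leaves at most one nontrivial component,'' or by extracting a bond inside the deleted set as in your main case. Hence $d(u)+d(v)-3\ge k$, which is even stronger than needed. This is the paper's argument.
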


\begin{proof}
Suppose otherwise that $u$ and $v$ are adjacent and $d_G(u)+d_G(v)\leq k+1$. Since $e_G(u,v)\geq1$,
\[
 d_G(\{u,v\})=d_G(u)+d_G(v)-2e_G(u,v)\leq k-1.
\]
If $G-\{u,v\}$ contains an edge, deleting the edges between $\{u,v\}$ and its complement leaves at least two nontrivial components. This contradicts essential $k$-edge-connectivity.

Hence $V(G)\setminus\{u,v\}$ is independent. Every edge of $G$ is incident with $u$ or $v$. Since the underlying simple graph of $G$ is not a star, it has a matching $\{uw,vz\}$ of size two. Delete every edge incident with $u$ or $v$ except one copy of $uw$ and one copy of $vz$. At most $d_G(u)+d_G(v)-2\leq k-1$ edges are deleted, and two nontrivial components remain. This is again a contradiction.
\end{proof}

For a vertex set $X\subseteq V(G)$, recall that $G/X$ is obtained from $G$ by identifying all vertices of $X$ as one vertex and deleting the resulting loops. Ordinary and essential edge-connectivity are preserved under contraction.

\begin{proposition}\label{prop:contraction-connectivity}
Let $G$ be a $t$-edge-connected, essentially $k$-edge-connected graph, and let $\varnothing\neq X\subsetneq V(G)$. Then $G/X$ is also $t$-edge-connected and essentially $k$-edge-connected. Furthermore, if $X$ is connected, then every edge-cut of $G/X$
lifts to an edge-cut of $G$ of the same size.
\end{proposition}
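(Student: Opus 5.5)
The plan is to identify edge-cuts of $G/X$ with edge-cuts of $G$ directly. Write $w$ for the vertex of $G/X$ obtained by contracting $X$. Every vertex set $S\subseteq V(G/X)$ lifts to a set $\hat S\subseteq V(G)$ by replacing $w$ with $X$ whenever $w\in S$. The edges of $G/X$ are exactly the edges of $G$ with at most one end in $X$. Edges with both ends in $X$ become loops and are deleted, and they never cross $\hat S$ because $X$ lies entirely on one side. Hence $E_{G/X}(S,\bar S)=E_G(\hat S,\overline{\hat S})$ as edge sets, and in particular $d_{G/X}(S)=d_G(\hat S)$.

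The key steps, in order, are as follows.

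\textbf{$t$-edge-connectivity.} Any edge set $F$ of $G/X$ with $|F|<t$ is also an edge set of $G$. Since $G-F$ is connected, its quotient $(G-F)/X=(G/X)-F$ is connected. So $G/X$ is $t$-edge-connected.

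\textbf{Essential $k$-edge-connectivity.} Suppose $F\subseteq E(G/X)$ with $|F|\le k-1$ and $(G/X)-F$ has two nontrivial components $C_1,C_2$. Lift their vertex sets to $\hat C_1,\hat C_2$. At most one of them contains $X$, and each still contains an edge of $G$: an edge of $C_i$ is a non-loop edge of $G$ with both ends in $\hat C_i$. Now look at $G-F$. By the edge identity above, no edge of $G-F$ joins $\hat C_1$ to its complement. So $\hat C_1$ is a union of components of $G-F$, and some such component contains an edge of $C_1$; call it $A$. Likewise $\hat C_2$ yields a nontrivial component $B$, and $A\cap B=\varnothing$ because $\hat C_1\cap\hat C_2=\varnothing$. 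This gives two nontrivial components of $G-F$ with $|F|\le k-1$, a contradiction.

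This step is the one needing the most care. The subtlety is that $\hat C_i$ containing $X$ may be disconnected in $G-F$ when $X$ is not connected. Choosing a component that contains an actual edge of $C_i$ sidesteps this, since we never need $\hat C_i$ itself to be connected. The statement also presupposes that $G/X$ is connected with at least two vertices, which follows from $X\neq V(G)$ and connectivity of $G$.

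\textbf{Lifting cuts when $X$ is connected.} Let $F=E_{G/X}(S,\bar S)$ be a bond of $G/X$; minimality means both sides $(G/X)[S]$ and $(G/X)[\bar S]$ are connected. Assume $w\in S$. Then $G[\hat S]$ is connected: any vertex of $\hat S$ reaches $w$ in $(G/X)[S]$, that walk lifts to a walk in $G$ ending somewhere in $X$, and $G[X]$ is connected. The side $\overline{\hat S}=\bar S$ induces the same graph in $G$ as in $G/X$, so it is connected too. Hence $E_G(\hat S,\overline{\hat S})$, which equals $F$ as an edge set, is a bond of $G$ of the same size.
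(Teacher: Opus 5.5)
Your proof is correct and follows the same route as the paper: both obtain $t$-edge-connectivity and essential $k$-edge-connectivity of $G/X$ by transferring the deleted edge set $F$ and the lifted components back to $G$. Your version is more careful than the paper's in two places. The paper simply asserts that the lifted vertex sets give nontrivial components of $G-F$, whereas you choose a component of $G-F$ containing an actual edge of $C_i$, which also covers disconnected $X$. And you supply the argument for the second part, which the paper omits, by checking that both shores of the lifted bond are connected.
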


\begin{proof}
Let $F$ be an edge set whose deletion disconnects $G/X$. The corresponding edges also disconnect $G$, and hence $|F|\geq t$. Thus $G/X$ is $t$-edge-connected. If $(G/X)-F$ has two nontrivial components, the corresponding vertex sets in $G-F$ each contain an edge. Thus $G-F$ has at least two nontrivial components, and hence $|F|\geq k$. Therefore, $G/X$ is essentially $k$-edge-connected.

The proof of the second part is simple and we omit it.
\end{proof}

The following lemma presents a method for finding a $\Z_3$-connected subgraph $K_{4,4}$ or $K_{4,4}^{-}$ in a dense simple bipartite graph.

\begin{lemma}\label{lem:common-neighbors}
Let $H$ be a simple bipartite graph with parts $A$ and $B$. Let $x,y,z$ be positive integers. Suppose that every vertex of $A$ has at least $x$ neighbors in $B$, where $1\leq y\leq x\leq|B|$. If
\[
 |A|>\frac{(z-1)\binom{|B|}{y}}{\binom{x}{y}},
\]
then some $z$ vertices of $A$ have at least $y$ common neighbors in $B$.
\end{lemma}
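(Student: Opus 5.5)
This is a double-counting argument on pairs consisting of a vertex of $A$ and a $y$-subset of its neighbourhood. Throughout, write $N(a)\subseteq B$ for the neighbourhood of a vertex $a\in A$.

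Consider the set
\[
 \mathcal{P}=\{(a,S): a\in A,\ S\subseteq N(a),\ |S|=y\}.
\]
First count from the side of $A$. Each $a\in A$ has $|N(a)|\ge x\ge y$, and the binomial coefficient $\binom{m}{y}$ is nondecreasing in $m$. So $a$ lies in at least $\binom{|N(a)|}{y}\ge\binom{x}{y}$ pairs, which gives
\[
 |\mathcal{P}|\ge |A|\binom{x}{y}.
\]
Since $1\le y\le x$, we have $\binom{x}{y}\ge 1>0$, so dividing by it below is legitimate.

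Now count from the side of $B$. Group the pairs by their second coordinate $S$, which ranges over the $\binom{|B|}{y}$ subsets of $B$ of size $y$. This is possible because $y\le x\le |B|$. For a fixed $S$, the pairs $(a,S)$ correspond exactly to the vertices $a\in A$ with $S\subseteq N(a)$, that is, to the vertices of $A$ adjacent to every vertex of $S$.

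Suppose, for contradiction, that no $z$ vertices of $A$ have $y$ common neighbours in $B$. Then every $y$-subset $S$ is contained in at most $z-1$ neighbourhoods, so
\[
 |\mathcal{P}|\le (z-1)\binom{|B|}{y}.
\]
Combining the two counts gives $|A|\binom{x}{y}\le (z-1)\binom{|B|}{y}$, that is,
\[
 |A|\le \frac{(z-1)\binom{|B|}{y}}{\binom{x}{y}},
\]
which contradicts the hypothesis. Therefore some $y$-subset $S\subseteq B$ lies in the neighbourhoods of at least $z$ vertices of $A$, and any $z$ of those vertices have the $y$ vertices of $S$ as common neighbours.

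Equivalently, this is the pigeonhole principle: the average number of vertices of $A$ per $y$-subset of $B$ exceeds $z-1$. No step is a genuine obstacle. The only points to check are that $\binom{x}{y}>0$ and that monotonicity of $\binom{m}{y}$ in $m$ justifies the bound $\binom{|N(a)|}{y}\ge\binom{x}{y}$, both of which hold under the stated hypotheses $1\le y\le x\le|B|$.
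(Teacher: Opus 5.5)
Your proof is correct and is essentially the paper's argument: both double count pairs $(w,C)$ with $w\in A$ and $C$ a $y$-subset of $N_H(w)$. In both, the count is bounded below by $|A|\binom{x}{y}$ and, under the contrary assumption, above by $(z-1)\binom{|B|}{y}$. Your explicit checks that $\binom{x}{y}>0$ and that $\binom{m}{y}$ is monotone in $m$ are details the paper leaves implicit.
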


\begin{proof}
We use a double counting argument. Suppose to the contrary that no $z$ vertices of $A$ have $y$ common neighbors in $B$. Count the pairs $(w,C)$ such that $w\in A$ and $C$ is a $y$-subset of $N_H(w)$. Each $y$-subset of $B$ is contained in the neighborhoods of at most $z-1$ vertices of $A$. Therefore,
\[
 \sum_{w\in A}\binom{d_H(w)}{y}\leq(z-1)\binom{|B|}{y}.
\]
On the other hand, given a vertex $w\in A$, there are at least $\binom{x}{y}$ number of $y$-subsets in $N_H(w)$. Thus,
\[
 \sum_{w\in A}\binom{d_H(w)}{y}\geq|A|\binom{x}{y}>(z-1)\binom{|B|}{y},
\]
a contradiction.
\end{proof}

\subsection{Proofs of Theorems \ref{thm:forbidden-cuts} and \ref{thm-3-flow-thm} from the Extension Theorem}
We first derive Theorem \ref{thm-3-flow-thm} from Theorem \ref{thm:extension}, and then prove Theorem  \ref{thm:forbidden-cuts} via Theorems \ref{thm-3-flow-thm} and \ref{thm:extension}.

\begin{proof}[Proof of Theorem \ref{thm-3-flow-thm} under the assumption of Theorem \ref{thm:extension}]
Let $k=41$, and let $G$ be a $4$-edge-connected essentially $k$-edge-connected graph.
If $|V(G)|=2$, then the graph $G$ is isomorphic to $tK_{2}$ for $t\geq4$, and so $G$ is $\Z_{3}$-connected by Proposition \ref{lem:basic-facts}, and hence admits a nowhere-zero $3$-flow.

Suppose that $|V(G)|=3$, say $V(G)=\{u,v,w\}$. Since $G$ is $4$-edge-connected, we can assume $e(u,v)\geq2$ after possible relabeling.
The subgraph formed by $G[\{u,v\}]$ is $\Z_3$-connected by Proposition \ref{lem:basic-facts}.
Then we also have $G/\{u,v\}$ is isomorphic to $tK_{2}$ for $t\geq2$ and thus $\Z_{3}$-connected by Proposition \ref{lem:basic-facts}. It follows from Proposition \ref{prop-extension} that $G$ is $\Z_3$-connected and admits a nowhere-zero $3$-flow.

     Assume $|V(G)|\geq4$. If $\delta(G)\geq 6$, then $G$ is $6$-edge-connected and admits a nowhere-zero $3$-flow by Theorem \ref{thm:ltwz-six}. Hence $\delta(G)\leq5$. Take $z_{0}\in V(G)$ with $d(z_{0})=\delta(G)$. Fix a valid preorientation at $z_0$ such that $d^{+}(z_{0})-d^{-}(z_{0})\equiv 0\pmod{3}$. Now we only need to verify (\ref{cond2}) to apply Theorem \ref{thm:extension}. If $G-z_{0}$ is disconnected, then one of its components would be incident with at most $\lfloor\frac{d(z_{0})}{2}\rfloor\leq2$ at $z_0$. These edges would form an edge-cut of size at most $2$, contrary to the $4$-edge-connectivity of $G$. If $G-z_{0}$ is connected but has a bridge $e$, then we write $A,B$ for the two connected components of $G-z_{0}-e$. Without loss of generality we assume $|A|\geq2$ since $|V(G)|\geq4$. The subgraph $G[B\cup \{z_0\}]$ also contains an edge. Now $|B\cup \{z_{0}\}|\geq2$ and we have $e(A,B\cup \{z_{0}\})\geq k$. Moreover, we have $e(B,B^{c})\geq 4$, and
     \[
     d(z_{0})=d_A(z_{0})+d_B(z_{0})=e(A,B\cup \{z_{0}\})+e(B,B^{c})-2\geq k+2>5\geq d(z_0),
     \]
     which is a contradiction. Hence $G-z_{0}$ is $2$-edge-connected and we can apply Theorem \ref{thm:extension} to extend the preorientation of $E(z_{0})$ to a modulo $3$-orientation of $G$.
\end{proof}
Now we prove Theorem  \ref{thm:forbidden-cuts} via Theorems \ref{thm-3-flow-thm} and \ref{thm:extension}. In fact, we prove the following stronger form, which implies Theorem \ref{thm:forbidden-cuts}.
\begin{theorem}
    Every $4$-edge-connected graph without an essential edge-cut of any size from
$6$ to $40$ admits a nowhere-zero $3$-flow.
\end{theorem}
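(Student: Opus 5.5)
We argue by taking a minimum counterexample $G$ (minimizing $|V(G)|+|E(G)|$) to the statement: $G$ is $4$-edge-connected, has no essential edge-cut of size in $\{6,\dots,40\}$, and has no nowhere-zero $3$-flow. If $G$ has no essential edge-cut of size at most $5$, then $G$ is essentially $41$-edge-connected and \Cref{thm-3-flow-thm} applies. So we may assume $G$ has an essential edge-cut $F$ with $|F|\le 5$. Since $G$ is $4$-edge-connected, $|F|\in\{4,5\}$. Among all such cuts, choose $F=E(X,X^c)$ with $|X|$ minimal, where $X$ is the shore of one side and both $G[X]$ and $G[X^c]$ contain edges.

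\textbf{Contracting $X^c$.} Set $G_1=G/X^c$ and let $z_0$ be the new vertex, so $d_{G_1}(z_0)=|F|\le 5$. Because $G[X^c]$ is connected ($F$ is a bond), \Cref{prop:contraction-connectivity} shows two things. First, $G_1$ is $4$-edge-connected. Second, every edge-cut of $G_1$ lifts to an edge-cut of $G$ of the same size, and essential cuts lift to essential cuts, since a nontrivial side containing $z_0$ stays nontrivial after un-contracting.

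We claim $G_1$ has no essential cut of size at most $40$. There are none of size $6$ to $40$ by hypothesis. Suppose $G_1$ had an essential cut of size $4$ or $5$. Its shore avoiding $z_0$ is a proper subset $Y\subsetneq X$, and this cut is an essential cut of $G$ with smaller shore $Y$, contradicting the minimality of $|X|$. The one subtlety is that the shore not containing $z_0$ might be a single vertex, but then the cut is trivial, not essential. Hence $G_1$ is $4$-edge-connected and essentially $41$-edge-connected.

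\textbf{Orienting the small side.} By \Cref{thm-3-flow-thm}, $G_1$ has a modulo $3$-orientation. Its restriction to $F$ is a valid preorientation $P$ at the vertex of $G_2=G/X$ formed by contracting $X$; call this vertex $z_0'$, with $d(z_0')=|F|\le 5\le 42$.

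\textbf{Extending over the large side.} The goal is to extend $P$ to $G_2$ and glue, since the two orientations agree on $F$ and give a modulo $3$-orientation of $G$. If $G_2$ has fewer than $4$ vertices, we handle it directly: it is small and $4$-edge-connected, hence $\Z_3$-connected as in the proof of \Cref{thm-3-flow-thm}, so any valid boundary extends. Otherwise $G_2$ is a smaller graph satisfying the hypotheses, but it may still contain essential $4$- or $5$-cuts.

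We would induct on the number of essential small cuts, strengthening the statement to the following extension form: every valid preorientation at a vertex $z_0$ of degree at most $5$ extends, provided $G-z_0$ is $2$-edge-connected. The $2$-edge-connectivity of $G_2-z_0'$ follows as in the proof of \Cref{thm-3-flow-thm}. A bridge there would give two nontrivial pieces, and if their separation has size at most $5$, then $d(z_0')\ge 4+4-2>5$, or a similar count, gives a contradiction. When $G_2$ has no essential small cut, the strengthened statement is exactly \Cref{thm:extension}. When it has one, we split again: contract the side containing $z_0'$, apply induction to extend there, and then extend on the remaining side. In each step the piece containing the preoriented vertex is genuinely smaller.

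\textbf{Main obstacle.} The main obstacle is making this recursive extension well-founded. The small-cut pieces must always satisfy condition (\ref{cond2}), and the minimality choice must prevent essential cuts from reappearing in the pieces where \Cref{thm:extension} is invoked. To guarantee this, I would choose the cut $F$ nested innermost relative to $z_0'$.
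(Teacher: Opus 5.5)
Your setup through ``Orienting the small side'' is sound. In particular, your argument that $G_1=G/X^c$ is essentially $41$-edge-connected by minimality of $|X|$ is correct. The gap is the last step. $G_2=G/X$ may contain essential $4$- or $5$-cuts, so \Cref{thm:extension} does not apply to it, and the rooted strengthening you need is never proved. You flag this yourself as the main obstacle, and the ``innermost cut'' device is not justified. As written, the recursive step is also backwards: contracting the side that contains $z_0'$ merges the preoriented vertex into the new vertex, so there is nothing to ``extend there.'' You must contract the side avoiding the root. Note also that you set up a minimum counterexample but never use its minimality.

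The missing idea is to swap the roles of the two sides, which is what the paper does. Since $|X|\ge 2$ and $G_2$ stays in the class by \Cref{prop:contraction-connectivity}, minimality gives $G_2$ a modulo $3$-orientation with no prescribed directions. Its restriction to $F$ is a valid preorientation at the contracted vertex $z_0$ of $G_1$. Now every hypothesis of \Cref{thm:extension} holds on the small side:
\begin{itemize}
\item $d(z_0)=|F|\le 5$;
\item $G_1$ is essentially $41$-edge-connected, by your argument;
\item $G_1-z_0=G[X]$ is $2$-edge-connected, because a bridge with shores $P,Q$ would force $e(P,X^c)+e(Q,X^c)\ge 3+3>|F|$ by $4$-edge-connectivity.
\end{itemize}
Gluing the two orientations along $F$ finishes the proof. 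No rooted statement is needed for graphs with small essential cuts.

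If you prefer your order, the rooted strengthening does hold by plain induction on $|V(G)|$, with root $r$. Take an essential $4$- or $5$-cut $E(Y,Y^c)$ with $r\in Y$. The graph $(G/Y^c)-r=(G-r)/Y^c$ is $2$-edge-connected, and $G[Y^c]$ is $2$-edge-connected by the same bridge count. Both quotients are smaller, since each side of an essential cut has at least two vertices, so no innermost choice is required. But you would have to write this out, and it is strictly more work than the paper's ordering.
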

\begin{proof}
    Suppose to the contrary that $G$ is a counterexample with $|V(G)|$ minimum.

    If $G$ is $4$-edge-connected essentially $41$-edge-connected, then by Theorem \ref{thm-3-flow-thm}, $G$ admits a nowhere-zero $3$-flow. Since $G$ contains no essential edge-cut of size from $6$ to $40$, we can assume $G$ contains an essential edge-cut $E(A,B)$ of size $4$ or $5$, where $A$ and $B$ are connected and $|A|,|B|\geq2$. We take such an essential edge-cut with $|B|$ minimum.
    
    We claim that $B$ is $2$-edge-connected. For otherwise, $B$ contains an edge $e$ such that $G[B]-e$ contains two connected components. 
    One of them is incident with $A$ by at most $\lfloor\frac{1}{2}|E(A,B)|\rfloor\leq 2$ edges.
    This implies $G$ can be made disconnected by deleting at most $3$ edges, contrary to that $G$ is $4$-edge-connected.

    Then we consider the graph $G/B$ obtained from $G$ by contracting $B$. By Proposition \ref{prop:contraction-connectivity}, $G/B$ is also $4$-edge-connected and contains no essential edge-cut of size from $6$ to $40$. Since $|V(G/B)|<|V(G)|$ and $G$ is a minimum counterexample, $G/B$ admits a modulo $3$-orientation $D'$. We regard $D'$ as a preorientation of $G$, and the orientation of $E(A,B)$ as a valid preorientation at $\{A\}$ in $G/A$. 

    Finally we consider the graph $G/A$ obtained from $G$ by contracting $A$. We have $d_{G/A}(\{A\})\leq 5\leq 42$, $B=G/A -\{A\}$ is $2$-edge-connected and the preorientation at $\{A\}$ is valid. To apply Theorem \ref{thm:extension}, it suffices to verify $G/A$ is essentially $41$-edge-connected. Note that $G/A$ contains no essential edge-cut of size from $6$ to $40$, we only need to show $G/A$ contains no essential edge-cut of size from $4$ to $5$. In fact, if $G/A$ contains an essential edge-cut $E(X,Y)$ of size $4$ or $5$ with $\{A\}\in X$, then in $G$ the edge-cut $E((X\setminus \{A\})\cup A,Y)$ is also an essential edge-cut of size $4$ or $5$ with $|Y|\leq |B|-1$ since $|X|\geq2$, contrary to the minimal choice of $|B|$. By Theorem \ref{thm:extension}, the preorientation of $E(\{A\},G/A-\{A\})$ can be extended to a modulo $3$-orientation $D''$ of $G/A$. By combining the partial orientation $D'$ and $D''$, we obtain a modulo $3$-orientation of $G$ and complete the proof.
\end{proof}




\section{Proof of the Extension Theorem}
\label{sec:pf1.5}
\crefname{claim}{Claim}{Claims}
\Crefname{claim}{Claim}{Claims}

For convenience, we restate the Extension Theorem.

\begin{theorem}[Extension Theorem]\label{thm:extension-section3}
Let $k=41$. Assume that $G$ is a $4$-edge-connected, essentially
$k$-edge-connected graph, and let $z_0\in V(G)$. A preorientation at
$z_0$ extends to a modulo $3$-orientation of the entire graph $G$ if the following
conditions hold:
\begin{enumerate}[(i)]
 \item $d(z_{0})\leq k+1$ and the preorientation at $z_0$ is valid, i.e. $d^{+}(z_{0})-d^{-}(z_{0})\equiv 0\pmod{3}$;
 \item $G-z_0$ is $2$-edge-connected.
\end{enumerate}
\end{theorem}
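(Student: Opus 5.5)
We argue by a minimum counterexample: choose $(G,z_0)$ violating the statement with $|V(G)|+|E(G)|$ minimum. The plan has three stages. First, we derive basic structure of $G$. Second, we lift away all degree-$4$ vertices other than $z_0$, so that $\delta(G-z_0)\ge 5$. Third, we combine the density bound of \Cref{thm:z3-density} with the counting of \Cref{lem:common-neighbors} to find a $\Z_3$-connected subgraph inside $G-z_0$, which contradicts minimality.

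\textbf{Basic properties.} We first show that $G-z_0$ contains no nontrivial $\Z_3$-connected subgraph $H$. If it did, we would contract $H$. By \Cref{prop:contraction-connectivity}, $G/H$ is still $4$-edge-connected and essentially $k$-edge-connected, and contraction preserves condition (ii) and the degree of $z_0$. Minimality then gives an extension in $G/H$, and \Cref{prop-extension} lifts it back to $G$ with the same boundary at $z_0$. Hence $G-z_0$ is $\Z_3$-reduced; in particular it has no parallel edges, since $2K_2$ is $\Z_3$-connected, and it contains no $K_{4,4}$ or $K_{4,4}^-$.

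Next we handle small edge-cuts. Let $X\subseteq V(G)-z_0$ with $|X|\ge 2$ and $d(X)\le 5$. We contract $V(G)\setminus X$ to a single vertex $z'$ of degree at most $5\le k+1$. We then solve the smaller instance on the side containing $z_0$ (with $X$ contracted), read off the induced valid preorientation at $z'$, and extend it into $X$ by minimality, checking that condition (ii) transfers. This lets us assume that every such $X$ is trivial, so all nontrivial cuts avoiding $z_0$ are large. Finally, \Cref{prop:degree-sum} gives $d(u)+d(v)\ge k+2=43$ for adjacent $u,v$.

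\textbf{Removing degree-$4$ vertices.} Let $v\ne z_0$ have $d(v)=4$. By the previous step its neighbours have degree at least $39$. We split $v$ into two lifted pairs $uv,vw\mapsto uw$, so that any modulo $3$-orientation of the lifted graph yields one at $v$ (each pair is oriented through $v$, giving imbalance $0$). Since $d(v)+d(x)\ge 43$, the lifted graph remains $4$-edge-connected and essentially $k$-edge-connected: a small cut in the lifted graph would give a small cut in $G$ separating $v$ from heavy neighbours. Condition (ii) is checked using the three possible pairings, which is the standard Mader-type choice, and an edge-count decrease then applies minimality. If $z_0$ is a neighbour of $v$, lifting alters the preorientation; we handle this by orienting the lifted edge consistently at $z_0$.

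\textbf{Density and counting (main obstacle).} Now $\delta(G-z_0)\ge 5$, apart from degree drops caused by edges to $z_0$. Let $n=|V(G)|-1$. \Cref{thm:z3-density} gives $|E(G-z_0)|\le 4n-11$. Degrees in $G$ sum to at least $5n+d(z_0)$, and $d(z_0)\le 42$ is small, so the average degree of $G-z_0$ is below $8$. By \Cref{prop:degree-sum}, the set $S$ of vertices of degree less than $22$ is independent, and every neighbour of $S$ has degree at least $43-21=22$. The density bound then forces $S$ to make up a large fraction of the vertices while $L=V\setminus S$ stays relatively small. We apply \Cref{lem:common-neighbors} with $A=S$ (minus neighbours of $z_0$), $B=L$, $x=5$, $y=4$ and $z=4$: once $|A|>3\binom{|L|}{4}/\binom54$ we obtain $K_{4,4}$, or $K_{4,4}^-$ after tolerating one missing edge. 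This contradicts the $\Z_3$-reducedness of $G-z_0$.

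The difficulty is that $\binom{|L|}{4}$ grows too fast for a global count. So we will apply the counting locally, around a high-degree vertex $u\in L$ whose neighbourhood consists mostly of low-degree vertices. Each such neighbour has at least $4$ further neighbours in $L$, and we confine these to a bounded region using essential connectivity: the set $\{u\}\cup N(u)$ has boundary at most $k-1$ unless degrees are huge. We expect that balancing these parameters is exactly what forces the constant $k=41$, and this is where most of the work will lie.
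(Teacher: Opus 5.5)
Your skeleton is the paper's: a minimal counterexample, $G-z_0$ being $\Z_3$-reduced, lifting away degree-$4$ vertices, and then density plus \Cref{lem:common-neighbors} to force $K_{4,4}$ or $K_{4,4}^-$. However, neither of the two load-bearing steps is established.

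\textbf{The lifting step.} The claim that the lifted graph stays essentially $k$-edge-connected ``since $d(v)+d(x)\ge 43$'' does not follow. Suppose the two lifted edges lie on opposite shores of a cut $F$ of the lifted graph. Restoring $v$ on either side gives a cut of $G$ of size $|F|+2$. So an essential cut of $G$ of size $k$ or $k+1$, which the hypotheses permit, becomes an essential cut of size $k-2$ or $k-1$, however heavy the neighbours of $v$ are. Your small-cut reduction ($d(X)\le 5$) is vacuous under essential $41$-edge-connectivity. Mader-type pairings control local edge-connectivities in $G$, not essential connectivity or the $2$-edge-connectivity of $G-z_0$. The paper first proves a genuinely stronger reduction: every essential cut of size at most $k+1$ has a shore with at most two vertices. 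It does this by splitting the instance along the cut, with the far shore becoming a new rooted instance whose root has degree at most $k+1$; this is exactly why the theorem allows $d(z_0)\le k+1$. Together with $|V(G)|\ge 6$, this rules out the bad configuration, since it would force $|V(G)|=5$.

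\textbf{The counting step.} This is not carried out, and your diagnosis that a global count must fail comes from splitting at degree $22$. The missing idea is to use only vertices of degree $5,6,7$. By \Cref{prop:degree-sum}, all their neighbours have degree at least $38,37,36$ respectively. The density bound with $d(z_0)\le 42$ gives $\sum_v(d(v)-8)\le 54$. Every vertex of degree at least $36$ carries excess at least $28$, so one gets both $T:=3|V_5|+2|V_6|+|V_7|\le 180$ and $28|V'_{\ge 36}|\le T+20$. Hence $G-z_0$ has at most $7$ vertices of degree at least $36$, and \Cref{lem:common-neighbors} applies globally with $|B|\le 7$. This needs a case analysis on $T$ that also tracks how many low-degree vertices meet $z_0$ (at most $42$).

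It also needs a structural fact you have not proved: every $w\ne z_0$ has at least $3$ neighbours in $G-z_0$. The paper gets this by contracting $\{w,z_0\}$ to a new root when $d_{G-z_0}(w)=2$, using the large-cut claim above to check $2$-edge-connectivity. The fact is indispensable in two regimes:
\begin{itemize}
\item When $T$ is small, every low-degree vertex may be adjacent to $z_0$, so your set $A$ can be empty.
\item When $T=0$, no $K_{4,4}$ is found at all. The contradiction comes instead from a vertex of minimum degree $s\in[8,12]$ in $G-z_0$ with three neighbours of degree at least $43-s$, giving $\sum_x d(x)\ge 171+(n-7)s>8n+54$ for $n=|V(G)|$.
\end{itemize}
Your proposed local argument around $\{u\}\cup N(u)$ addresses neither regime.
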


We prove the theorem by considering a minimum counterexample. The first
few claims establish its basic structure. We then completely lift every vertex
of degree $4$. Finally, the density bound in \cref{thm:z3-density} and
the common-neighbor argument in \cref{lem:common-neighbors} force a copy
of $K_{4,4}$ or $K_{4,4}^{-}$ in $G-z_0$.

\begin{proof}[Proof of \Cref{thm:extension-section3}]
Suppose that the theorem is false. Among all counterexamples, let
$(G,z_0)$ satisfy the following conditions:
\begin{enumerate}[(1)]
 \item $|E(G-z_0)|$ is minimum;
 \item subject to (1), $d(z_0)$ is maximum.
\end{enumerate}
We establish a sequence of claims on the properties of counterexample $G$.

\begin{claim}\label{cl:sec3-z3-reduced}
The graph $G-z_0$ is $\Z_3$-reduced. In particular, it contains no
$2K_2$ and hence is simple.
\end{claim}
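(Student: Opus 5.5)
We argue by contradiction: suppose $G-z_0$ contains a nontrivial $\Z_3$-connected subgraph $H$. Since $\Z_3$-connectivity of $tK_2$ passes to any graph containing it, this covers the "in particular" part: a $2K_2$ in $G-z_0$ is $\Z_3$-connected by \Cref{lem:basic-facts}, so once $\Z_3$-reducedness is shown, $G-z_0$ is simple. The plan is to contract $H$, check that $(G/H, z_0)$ still satisfies the hypotheses of \Cref{thm:extension-section3}, and appeal to minimality.

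\textbf{Setting up the smaller instance.} Let $G'=G/H$, and let $w$ be the vertex obtained from $H$. Since $H$ is nontrivial, $|E(G'-z_0)|<|E(G-z_0)|$. The edges at $z_0$ are unchanged, so $d_{G'}(z_0)=d_G(z_0)\le k+1$ and the given preorientation is still a valid preorientation at $z_0$ in $G'$. By \Cref{prop:contraction-connectivity}, $G'$ is $4$-edge-connected and essentially $k$-edge-connected. Moreover $G'-z_0=(G-z_0)/H$, and contracting a vertex set never creates a bridge, because any edge-cut of the contraction is an edge-cut of the original graph. Hence $G'-z_0$ is $2$-edge-connected. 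A small degenerate case should be checked: if $G-z_0=H$, then $G'$ has only the two vertices $z_0$ and $w$, and the argument below still applies (alternatively, one treats it directly, since $w$ then has boundary $0$). By the minimality in (1), $(G',z_0)$ is not a counterexample, so the preorientation at $z_0$ extends to a modulo $3$-orientation $D'$ of $G'$.

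\textbf{Lifting back to $G$.} Fix $D'$ on all edges not in $H$. For each $v\in V(H)$, define $\beta(v)\in\Z_3$ as minus the net outdegree of $v$ contributed by edges of $E(G)\setminus E(H)$ incident with $v$. These edges are already oriented by $D'$, including those going to $z_0$ and those going elsewhere. Since $D'$ is a modulo $3$-orientation at $w$, the sum $\sum_{v\in V(H)}\beta(v)$ equals minus the net outdegree of $w$ in $D'$, which is $0$. So $\beta$ is a $\Z_3$-boundary of $H$. Because $H$ is $\Z_3$-connected, it has a $\beta$-orientation. Combining this with $D'$ gives a modulo $3$-orientation of $G$ that extends the preorientation at $z_0$. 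This is exactly the extension step of \Cref{prop-extension}. It contradicts the assumption that $G$ is a counterexample.

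\textbf{Main point of care.} The delicate step is verifying hypothesis (ii) and the connectivity conditions for $G/H$. In particular, one must ensure $H$ avoids $z_0$, which is automatic since $H\subseteq G-z_0$, so that the preorientation is untouched. One must also confirm that contraction preserves $2$-edge-connectivity of $G-z_0$ and both the ordinary and essential edge-connectivity of $G$. All of these follow from \Cref{prop:contraction-connectivity} and the fact that edge-cuts of a contraction are edge-cuts of the original graph.
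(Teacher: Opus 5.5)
Your proposal is correct and is essentially the paper's argument: you contract $H$, verify the hypotheses for $(G/H,z_0)$ via \Cref{prop:contraction-connectivity}, invoke minimality (1), lift back through $H$ as in \Cref{prop-extension}, and get simplicity from \Cref{lem:basic-facts}(i). One small point in your hand-made lifting: edges of $G[V(H)]$ that are not in $H$ become loops in $G/H$ and are deleted, so $D'$ does not orient them. Orient them arbitrarily before defining $\beta$ (their contributions cancel in $\sum_{v\in V(H)}\beta(v)$), or simply cite \Cref{prop-extension} as the paper does.
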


\begin{claimproof}
Suppose that $H$ is a nontrivial $\Z_3$-connected subgraph of
$G-z_0$, and let $G'=G/H$. By
\cref{prop:contraction-connectivity}, the graph $G'$ remains
$4$-edge-connected and essentially $k$-edge-connected. Moreover,
contraction inside $G-z_0$ preserves the $2$-edge-connectivity of
$G-z_0$, and the preorientation at $z_0$ remains unchanged. Since
$|E(G'-z_0)|<|E(G-z_0)|$, condition (1) implies that the
preorientation extends to $G'$. By \cref{prop-extension}, this
orientation extends through $H$ to a modulo $3$-orientation of $G$, a
contradiction. The last assertion follows from part (i) of
\cref{lem:basic-facts}.
\end{claimproof}

\begin{claim}\label{cl:sec3-root-degree}
We have $d(z_0)=k+1$.
\end{claim}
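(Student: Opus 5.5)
The plan is to argue by contradiction: suppose $d(z_0)\le k$. I will produce another counterexample $(G',z_0)$ that is better in the chosen order, meaning either $|E(G'-z_0)|<|E(G-z_0)|$, or equal edge count with $d_{G'}(z_0)>d_G(z_0)$. The tool is \emph{lifting an edge through $z_0$}. Pick an edge $xy$ of $G-z_0$, delete it, and add two new edges $z_0x$ and $z_0y$. Preorient the new edges as $x\to z_0$ and $z_0\to y$.

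\textbf{Step 1: the lifted instance is valid.}
\begin{itemize}
\item \emph{Imbalance at $z_0$.} It is unchanged, so the preorientation stays valid.
\item \emph{Degree.} The new degree is $d(z_0)+2$. This is at most $k+1$ whenever $d(z_0)\le k-1$.
\item \emph{$4$-edge-connectivity.} Every edge-cut of $G$ crossed by $xy$ is crossed at least once by the path $x z_0 y$. Hence cut sizes never decrease, and $G'$ is still $4$-edge-connected.
\item \emph{Essential $k$-edge-connectivity.} For an edge set $F$ of $G'$, replace each new edge of $F$ by $xy$ to get a set $F^*$ of $G$ with $|F^*|\le|F|$. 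The components of $G'-F$ refine to components of $G-F^*$ with no fewer nontrivial ones, up to the edge $xy$ itself. Using $|V(G)|\ge 4$ and \cref{prop:degree-sum}, this gives that $G'$ is still essentially $k$-edge-connected.
\item \emph{Condition (ii).} $G'-z_0=G-z_0-xy$ must be $2$-edge-connected. So I choose $xy$ not lying in any $2$-edge-cut of $G-z_0$.

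For that choice I argue as follows. If every edge of $G-z_0$ lay in a $2$-edge-cut, then $G-z_0$ would have a $2$-edge-cut $\{e_1,e_2\}$ splitting it into sides $P,Q$. Then $d_G(P)\ge 4$ and $d_G(Q)\ge 4$. Further, $G[P\cup z_0]$ and $G[Q]$ (or the symmetric pair) are nontrivial once $|V(G)|\ge 4$. Counting as in the proof of \Cref{thm-3-flow-thm} then forces $d(z_0)\ge k$, or a small essential cut.

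I expect this to yield a suitable $xy$ when $d(z_0)\le k-1$. A cleaner alternative is to use that $G-z_0$ is simple and $\Z_3$-reduced (\cref{cl:sec3-z3-reduced}): pick $xy$ inside a $3$-edge-connected block, or on a cycle avoiding every $2$-edge-cut.
\end{itemize}

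\textbf{Step 2: the lifted instance is a smaller counterexample.} Since $|E(G'-z_0)|=|E(G-z_0)|-1$, minimality condition (1) says the preorientation extends to a modulo $3$-orientation $D'$ of $G'$. Replace the directed path $x\to z_0\to y$ by the arc $x\to y$. This changes no imbalance at $x$, $y$ or $z_0$, so it gives a modulo $3$-orientation of $G$ extending the original preorientation. That contradicts $(G,z_0)$ being a counterexample. Hence $d(z_0)\ge k$.

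\textbf{Step 3 (the main obstacle): excluding $d(z_0)=k$.} Here lifting would push the degree to $k+2$, so this step is delicate. My first attempt is to exploit criterion (2): keep $G-z_0$ unchanged and raise $d(z_0)$ by exactly one.
\begin{itemize}
\item Choose a neighbor $v$ of $z_0$ in $G-z_0$ and an edge $vw$ of $G-z_0$.
\item Lift $vw$, then delete one original edge between $z_0$ and $v$ that is suitably oriented, so that it cancels with the new edge at $v$ in the orientation sense. Choosing the lifted orientation appropriately achieves this.
\item Net effect: $G-z_0$ loses one edge, $d(z_0)$ rises by exactly one to $k+1$, and the imbalance at $z_0$ stays $0$ modulo $3$.
\end{itemize}
After this, the $4$-edge-connectivity check at $v$ must be redone, as must the essential-connectivity check. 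If the cancellation at $v$ fails, the fallback is to use the valid residue condition $d^+(z_0)-d^-(z_0)\equiv 0 \pmod 3$ to locate two edges at $z_0$ with appropriate orientations. Then perform a single lift together with a split-off at $z_0$ that preserves the imbalance. Either way the resulting instance contradicts (1), which gives $d(z_0)=k+1$.
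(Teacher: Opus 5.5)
\textbf{Verdict: there is a genuine gap.} The two verifications your approach depends on are exactly the ones you leave open, and one of them is false as stated.

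\textbf{Step~1.} Lifting $xy$ through $z_0$ does not preserve essential $k$-edge-connectivity for an arbitrary edge $xy$ that avoids the $2$-edge-cuts of $G-z_0$. Suppose $x$ is not adjacent to $z_0$ in $G$. Then in $G'$ the set $\{x,z_0\}$ spans the new edge $xz_0$, and its boundary has size $d_G(x)+d_G(z_0)$. Whenever $G-z_0-x$ is connected and contains an edge, this is an essential edge-cut of $G'$. Nothing known at this point rules out, say, $d_G(x)=5$ and $d_G(z_0)=4$. This is precisely the ``up to the edge $xy$ itself'' caveat in your argument. \cref{prop:degree-sum} does not help, because $x$ and $z_0$ are not adjacent in $G$. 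The existence of an edge $xy$ whose deletion keeps $G-z_0$ $2$-edge-connected is also only sketched.

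\textbf{Step~3.} The case $d(z_0)=k$ is not actually proved. As an unoriented graph, your $G'$ is $G-vw+wz_0$. So every vertex set containing $v$ but neither $w$ nor $z_0$ loses a boundary edge; in particular $d_{G'}(v)=d_G(v)-1$, and $d_G(v)=4$ is not excluded when $d(z_0)=k$. In addition, $G-z_0-vw$ must again be $2$-edge-connected. You defer both checks, and the ``fallback'' is not a concrete argument. Note also that this operation changes $G-z_0$, contrary to your stated aim of keeping it fixed.

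\textbf{The missing idea.} Carry out that stated aim literally, which makes everything easy.
\begin{enumerate}
\item Replace one root edge $e=uz_0$ by two parallel $uz_0$-edges, both oriented opposite to $e$. Since $\mp2\equiv\pm1\pmod 3$, the imbalance at $z_0$ is unchanged modulo $3$.
\item The underlying graph only gains a parallel root edge. This cannot decrease any cut and creates no new nontrivial shore. Also $G'-z_0=G-z_0$, so all hypotheses hold, $|E(G'-z_0)|$ is unchanged, and $d_{G'}(z_0)=d(z_0)+1\leq k+1$.
\item By the tie-breaker (2), the preorientation extends to $G'$.
\item Delete one new edge and reverse the other. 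This restores $e$ with its prescribed direction and changes the imbalances at $u$ and $z_0$ by multiples of $3$.
\end{enumerate}
This disposes of every $d(z_0)\leq k$ at once. Your Steps~1--2, and the delicate connectivity checks they require, are not needed.
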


\begin{claimproof}
Suppose that $d(z_0)\leq k$, and let $e=uz_0$ be a preoriented edge.
Delete $e$ and replace it with two parallel edges, both directed
opposite to $e$. At $z_0$, the total contribution of the two new edges
is congruent modulo $3$ to the contribution of $e$. The resulting
graph $G'$ satisfies the two hypotheses of the theorem. It also
satisfies
\[
 |E(G'-z_0)|=|E(G-z_0)|
 \quad\text{and}\quad
 d_{G'}(z_0)=d_G(z_0)+1\leq k+1.
\]
By condition (2) on minimal counterexample $G$, the preorientation extends to $G'$. Delete one of
the two new edges and reverse the other. The imbalance at each end
changes by a multiple of $3$, and the original edge $e$ is recovered.
Thus the preorientation extends to $G$, a contradiction.
\end{claimproof}

\begin{claim}\label{cl:sec3-order}
We have $|V(G)|\geq 6$, or equivalently,
$|V(G-z_0)|\geq 5$.
\end{claim}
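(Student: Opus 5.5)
We argue by contradiction: suppose $|V(G-z_0)|\le 4$. Write $H=G-z_0$ and $n=|V(H)|$. By \cref{cl:sec3-z3-reduced}, $H$ is simple, and by hypothesis (ii) it is $2$-edge-connected. In particular $n\ge 2$ would force $e_H(u,v)\ge 2$ for some pair when $n=2$, which is impossible in a simple graph. So $n\ge 3$ (the case $n=1$ is trivial, as the preorientation is itself a modulo $3$-orientation). Since $H$ is simple and $2$-edge-connected, with $n\in\{3,4\}$, it is one of $K_3$, $C_4$, $K_4^-$ (the diamond), or $K_4$. We treat these cases via the essential $k$-edge-connectivity together with $d(z_0)=k+1$ from \cref{cl:sec3-root-degree}.

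First we use \cref{prop:degree-sum} to rule out small degrees. Take any edge $uv$ of $H$. The graph $G$ has at least $4$ vertices, and its underlying simple graph is not a star, because $H$ contains a cycle. Hence $d_G(u)+d_G(v)\ge k+2$. Now count edges to $z_0$. If $n=3$, then $d(\{u,v\})=d_G(u)+d_G(v)-2\ge k$. On the other hand, $\{u,v\}$ and $\{z_0,w\}$ (with $w$ the third vertex) both span edges, since $z_0$ must be adjacent to $w$: otherwise $d_G(w)=2<4$. So $E(\{u,v\},\{z_0,w\})$ is an essential cut and this inequality is consistent. A better tool is needed.

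The cleaner approach is to find a $\Z_3$-connected subgraph of $G$ containing $z_0$ and extend directly. Alternatively, we can construct the orientation by hand. Since every vertex $v\in V(H)$ has $d_H(v)\le 3$, $4$-edge-connectivity gives $e(z_0,v)\ge 1$. Essential connectivity applied to the cut around an edge $uv$ of $H$ forces $e(z_0,u)+e(z_0,v)$ to be large, roughly $\ge k-4$. The plan is then to show directly that any valid preorientation at $z_0$ extends. Given the preorientation, each $v\in V(H)$ receives a prescribed boundary $\beta(v)=-(\text{net contribution of the } z_0v \text{ edges})$ in $\Z_3$. These satisfy $\sum_v\beta(v)=0$ by validity at $z_0$. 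We therefore need a $\beta$-orientation of $H$ for this specific $\beta$.

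The main obstacle is that $K_3$, $C_4$, $K_4^-$ and $K_4$ are not $\Z_3$-connected, so not every $\beta$ is achievable. The fix is to exploit minimality: we modify the preorientation at $z_0$ by a reversal trick like the one in \cref{cl:sec3-root-degree}. Alternatively, we note that the configuration is small enough to handle by direct case analysis. Concretely, we would check that for a cycle $C_n$ the achievable boundaries form a large set, and that the bad boundaries (for $K_3$: all $\beta\equiv$ a constant nonzero pattern, etc.) can be avoided. To avoid them, we reassign the parallel edges between $z_0$ and some vertex $v$ while preserving the $z_0$ imbalance. With many edges from $z_0$ to $u$, any two parallel $z_0u$ edges of opposite direction can be flipped simultaneously, or three same-direction edges flipped. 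This changes $\beta(u)$ and $\beta(v)$ in compensating ways and keeps $z_0$ valid. However, the preorientation is fixed, so instead we would argue that the multigraph $G[\{z_0,u\}]$ with $e(z_0,u)\ge 2$ is a $2K_2$, hence $\Z_3$-connected. Contracting it via \cref{prop-extension} yields a smaller instance, or a directly orientable small graph. That would be our first attempt, with the case check on $K_4$ (where $\sum$ over edges gives the needed slack) as the main remaining effort.
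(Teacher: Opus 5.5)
There is a genuine gap. For $|V(G-z_0)|\in\{3,4\}$ your proposal never reaches a contradiction. You apply \cref{prop:degree-sum} to a single edge, note that the resulting bound $d(\{u,v\})\ge k$ is consistent, and abandon the tool. The missing step is to sum the inequality over all edges of a cycle of $H=G-z_0$.

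\begin{itemize}
\item If $H$ is a triangle $v_1v_2v_3$, adding the three inequalities $d(v_i)+d(v_{i+1})\ge k+2$ gives $\sum_i d(v_i)\ge \frac32(k+2)$. Hence $d(z_0)=\sum_i d(v_i)-6\ge \frac32(k+2)-6\ge k+2$, contradicting \cref{cl:sec3-root-degree}.
\item If $|V(H)|=4$, take a triangle or a $4$-cycle in $H$. This gives $d(z_0)\ge \frac32(k+2)-9$ or $d(z_0)\ge 2(k+2)-12$, and both exceed $k+1$.
\end{itemize}

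Your own observation already suffices. For every edge $uv$ of $H$ you have $e(z_0,u)+e(z_0,v)\ge k-4$. Summing this around a cycle of length $\ell\in\{3,4\}$, where each vertex is counted twice, gives $d(z_0)\ge \frac{\ell}{2}(k-4)\ge 55.5>42$. This is the paper's argument: such configurations cannot exist at all.

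The route you pursue instead cannot succeed, because the contradiction must come from showing the configuration is impossible, not from constructing an extension. Take $H=K_3$ with $14$ parallel edges from $z_0$ to each $v_i$, all directed out of $z_0$. This preorientation is valid, since $42\equiv 0$. Yet the triangle would need every vertex to have net out-degree $2$, which no orientation achieves. This graph satisfies every hypothesis except essential $41$-edge-connectivity (the cut around $\{v_1,v_2\}$ has size $30$). So that hypothesis, used through the degree count, is exactly what must be invoked.

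Your proposed workarounds do not help:
\begin{itemize}
\item The doubling trick of \cref{cl:sec3-root-degree} leaves the induced boundary on $H$ unchanged modulo $3$, so it cannot steer away from bad boundaries.
\item Contracting the $2K_2$ on $\{z_0,u\}$ via \cref{prop-extension} lets the extension freely reorient the preoriented $z_0u$-edges. It also creates a new root whose incident edges are not all preoriented, so it does not respect the fixed preorientation, as you yourself note just before invoking it.
\end{itemize}

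As written, the proposal ends with the decisive case analysis undone.
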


\begin{claimproof}
If $|V(G)|=2$, the preorientation already orients every edge of $G$
and hence is an extension, contrary to the choice of $G$. If
$|V(G)|=3$, then by $G-z_0$ is $2$-edge-connected, it consists of two vertices joined by at least
two parallel edges. This contradicts \cref{cl:sec3-z3-reduced}.

Suppose that $|V(G)|=4$. Since $G-z_0$ is simple and
$2$-edge-connected, by \cref{{cl:sec3-z3-reduced}} it is a triangle with vertices $v_1,v_2,v_3$.
By \cref{prop:degree-sum},
$d(v_i)+d(v_{i+1})\geq k+2$, where the subscripts are taken modulo $3$.
Consequently,
\[
 d(z_0)=\sum_{i=1}^3d(v_i)-6
 \geq \frac{3}{2}(k+2)-6\geq k+2,
\]
contrary to $d(z_0)\leq k+1$.

Finally, suppose that $|V(G)|=5$. The simple
$2$-edge-connected graph $G-z_0$ contains a cycle of length $3$ or
$4$. We apply \cref{prop:degree-sum} again. A triangle gives
\[
 d(z_0)\geq\sum_{i=1}^3d(v_i)-9
 \geq \frac{3}{2}(k+2)-9\geq k+2,
\]
or a $4$-cycle gives
\[
 d(z_0)\geq\sum_{i=1}^4d(v_i)-12
 \geq 2(k+2)-12\geq k+2.
\]
Both inequalities contradict $d(z_0)\leq k+1$ when $k=41$.
\end{claimproof}

\begin{claim}\label{cl:sec3-large-cut}
If $E(X,X^c)$ is an essential edge-cut with
$\min\{|X|,|X^c|\}\geq 3$, then
\[
 e(X,X^c)\geq k+2.
\]
\end{claim}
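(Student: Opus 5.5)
Throughout, suppose for contradiction that $E(X,X^c)$ is an essential edge-cut with $\min\{|X|,|X^c|\}\ge 3$ and $e(X,X^c)\le k+1$. By relabeling we may assume $z_0\in X$. Among all such cuts I will choose one with $|X^c|$ minimum. The plan is the standard two-step splitting argument. First, extend the preorientation at $z_0$ in the graph $G_1=G/X^c$. This yields an orientation of the cut edges that is valid at the contracted vertex $x^c$, because the sum of imbalances over $X$ is $0$ modulo $3$ in any modulo $3$-orientation. Then use that orientation as a preorientation at the contracted vertex $x$ in $G_2=G/X$ and extend it again. Gluing the two orientations gives a modulo $3$-orientation of $G$ extending the original preorientation, which contradicts the choice of $G$.

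\emph{Checking the hypotheses for $G_1$ and $G_2$.} By \cref{prop:contraction-connectivity}, both $G_1$ and $G_2$ are $4$-edge-connected and essentially $k$-edge-connected.

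For $G_1$, we have $d_{G_1}(z_0)=d_G(z_0)=k+1$, and the preorientation is unchanged. Moreover $|E(G_1-z_0)|<|E(G-z_0)|$, since $G[X^c]$ is nontrivial and its edges disappear.

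For $G_2$, we have $d_{G_2}(x)=e(X,X^c)\le k+1$, and the preorientation at $x$ is valid. Also $G_2-x=G[X^c]$ misses all cut edges, so $|E(G_2-x)|<|E(G-z_0)|$.

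Hence minimality condition (1) applies to both graphs, provided condition (ii) holds. That is, we still need $G[X^c]$ and $G_1-z_0=(G-z_0)/X^c$ to be $2$-edge-connected.

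\emph{The $2$-edge-connectivity checks.} The second graph, $(G-z_0)/X^c$, is a contraction of the $2$-edge-connected graph $G-z_0$, so it is $2$-edge-connected. The main obstacle is $G[X^c]$. Suppose it has a bridge $e$, and let $P,Q$ be the components of $G[X^c]-e$. Then
\[
d(P)+d(Q)=e(X,X^c)+2\le k+3.
\]
Since $G[X]$ contains an edge (because $|X|\ge 3$ and $G-z_0$ is $2$-edge-connected), any part among $P,Q$ with at least two vertices spans an edge. Such a part therefore gives an essential cut, and so has $d\ge k$. Both parts cannot be like this, since $2k>k+3$. So one part, say $P=\{v\}$, is a single vertex. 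Moving $v$ into $X$ gives a new cut of size $e(X,X^c)-d(v)+2\le e(X,X^c)-2$, using $d(v)\ge4$, with a smaller $X^c$ side. This new side $X^c\setminus\{v\}=Q$ is connected and nontrivial.

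If $|Q|\ge3$, this contradicts the minimal choice of $|X^c|$. If $|Q|=2$, then $Q=\{u,w\}$ is an adjacent pair with $d(Q)\le k-1$. This contradicts essential $k$-edge-connectivity directly, since the other side contains an edge. Equivalently, \cref{prop:degree-sum} gives $d(u)+d(w)\ge k+2$, which is incompatible with the small cut. Thus $G[X^c]$ is $2$-edge-connected.

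\emph{Conclusion.} With both checks done, the two extensions exist. Their union is a modulo $3$-orientation of $G$ extending the preorientation at $z_0$. I expect this bridge analysis, and making sure the minimality of $|X^c|$ is set up so that the degenerate single-vertex case reduces correctly, to be the only delicate point. The rest is bookkeeping with \cref{prop:contraction-connectivity}.
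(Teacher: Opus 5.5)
Your proposal is correct and follows essentially the same route as the paper: extend first in the quotient that keeps $z_0$, transfer the induced valid orientation of $E(X,X^c)$ to the contracted root of the other quotient, and verify $2$-edge-connectivity of the far side by the bridge count. The only difference is cosmetic. Your extra choice of $|X^c|$ minimum is unnecessary: when $|Q|\ge 2$, the cut around $Q$ has size at most $e(X,X^c)-2\le k-1$ and is essential. That contradicts essential $k$-edge-connectivity directly, and it is exactly the paper's inequality $b+1\ge k$.
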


\begin{claimproof}
Suppose instead that $e(X,X^c)\leq k+1$, and assume that
$z_0\in X^c$. By the definition of an edge-cut, both $G[X]$ and
$G[X^c]$ are connected. Each of the following contractions strictly
decreases $|E(G-z_0)|$.

First, consider $G/X$. By \cref{prop:contraction-connectivity}, this
graph remains $4$-edge-connected and essentially
$k$-edge-connected. The graph $(G/X)-z_0$ is obtained from the
$2$-edge-connected graph $G-z_0$ by a contraction and is therefore
$2$-edge-connected. Hence minimality implies that the prescribed
orientation at $z_0$ extends to $G/X$. In particular, it induces an
orientation of $E(X,X^c)$ with zero total imbalance on the $X$-side.

Next, consider $G/X^c$, with the contracted vertex as the new root
and with the induced preorientation. By
\cref{prop:contraction-connectivity}, this graph remains
$4$-edge-connected and essentially $k$-edge-connected. Its root
degree is at most $k+1$. It remains to show that deleting the new root
leaves the graph $G[X]$, which is $2$-edge-connected. Suppose that
$G[X]$ has a bridge
with shores $A$ and $B$, and let
\[
 a=e(A,X^c)
 \quad\text{and}\quad
 b=e(B,X^c).
\]
If both $A$ and $B$ contain at least two vertices, the cuts around
$A$ and $B$ are essential. Hence
$a+1\geq k$ and $b+1\geq k$, contradicting
$a+b=e(X,X^c)\leq k+1$. If, say, $A$ is a singleton, then
$a+1\geq 4$. The cut around the connected set $B$ is essential, so
$b+1\geq k$. Again $a+b\geq k+2$, a contradiction. Therefore,
$G[X]$ is $2$-edge-connected.

Minimality now gives an extension on $G/X^c$. The two orientations
have the same directions on $E(X,X^c)$, and their union is an
extension on $G$, a contradiction.
\end{claimproof}

\begin{claim}\label{cl:sec3-no-four}
No vertex of $G$ has degree $4$.
\end{claim}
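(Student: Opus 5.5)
Since $d(z_0)=k+1=42$ by \cref{cl:sec3-root-degree}, a vertex of degree $4$ would lie in $G-z_0$. So let $v\in V(G)\setminus\{z_0\}$ with $d(v)=4$, and let $u_1,u_2,u_3,u_4$ be the ends of its edges, with repetitions allowed only for $z_0$ because $G-z_0$ is simple by \cref{cl:sec3-z3-reduced}. The plan is to \emph{lift} $v$ completely and then use the minimality of $(G,z_0)$.

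\textbf{The lifting and why it suffices.} Choose a pairing, say $\{u_1u_2,u_3u_4\}$. Delete $v$ and add the new edges $u_1u_2$ and $u_3u_4$, obtaining $G'$. If $u_i=z_0$, the new edge at $z_0$ inherits the preorientation of $z_0v$, so $d(z_0)$ and the validity of the preorientation are unchanged. If both edges of a pair go to $z_0$, I avoid that pairing, or discard the resulting loop when its two preorientations are opposite.

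Suppose $G'$ satisfies the hypotheses. Then $|E(G'-z_0)|<|E(G-z_0)|$: we delete the edges at $v$ that lie in $G-z_0$ and add strictly fewer such edges. By condition (1), the preorientation extends to a modulo $3$-orientation $D'$ of $G'$. Replace each lifted edge $u_iu_j$, oriented $u_i\to u_j$ in $D'$, by the directed path $u_i\to v\to u_j$. Every $u_i$ keeps its imbalance. The vertex $v$ receives two in-edges and two out-edges, so its imbalance is $0$. This gives an extension on $G$, a contradiction.

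\textbf{Verifying the hypotheses for a suitable pairing.} Three things must hold in $G'$.

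\begin{enumerate}[(a)]
\item \emph{$4$-edge-connectivity.} I would invoke Mader's splitting theorem (equivalently, the Zhang-type lifting lemma for a degree-$4$ vertex). Among the three pairings at a degree-$4$ vertex in a $4$-edge-connected graph, at least two preserve $4$-edge-connectivity. Equivalently, a pairing fails only if there is a set $Y\ni v$ with $d(Y)\le 5$ that contains both ends of one pair, and such sets can be ruled out for two of the pairings.
\item \emph{Essential $k$-edge-connectivity.} Let $F$ be an essential cut of $G'$ with $|F|\le k-1$ and sides $X,X^c$. Put $v$ on the side where it creates fewer crossing edges. This yields a cut of $G$ of size at most $k+1$ whose two sides are nontrivial. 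If both sides have at least $3$ vertices, \cref{cl:sec3-large-cut} gives a contradiction.
\item \emph{$2$-edge-connectivity of $G'-z_0$.} A bridge of $G'-z_0$ would similarly lift to a small cut in $G-z_0$ or $G$. This is handled by the same side-counting argument, combined with the $4$-edge-connectivity of $G$ and the $2$-edge-connectivity of $G-z_0$.
\end{enumerate}

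\textbf{The main obstacle.} The difficult case in (b) is when the lifted cut has a side with exactly $2$ vertices, say $\{x,y\}$ with $xy\in E$. Then $d_G(x)+d_G(y)-2e(x,y)\le k+1$, and \cref{prop:degree-sum} alone ($d(x)+d(y)\ge k+2$) does not exclude this.

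My first attempt is to use the freedom in choosing among the (at least two) admissible pairings. I would argue that such a two-vertex side must contain both ends of a lifted pair together with $v$, and so constrains only one pairing. I would also check directly that the side $\{x,y,v\}$ or $\{x,y\}$ cannot be small. For example, if $d(x)$ is small, then $\{x,v\}$ together with $y$ forces $d(z_0)$ or some degree sum above the available bounds, using $|V(G-z_0)|\ge5$ from \cref{cl:sec3-order}.

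If this fails, a fallback is to use that the two new edges at $x,y$ make $e(x,y)\ge2$ in $G'-z_0$ when $x,y\ne z_0$. Then the contracted pair is $\Z_3$-connected, and I would reduce by contracting it instead, keeping the minimality count. Making this case analysis clean is where I expect most of the effort to go.
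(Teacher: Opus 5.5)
Your overall plan (completely lift $v$, invoke condition (1), undo the lifting) matches the paper's. However, the step you flag as the main obstacle is left unresolved, it is the heart of the argument, and neither remedy you suggest closes it.

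Part of the difficulty is self-inflicted. Let $F=E_{G'}(X,X^c)$ be a cut of $G'$. Restoring $v$ on a suitable shore gives a cut of $G$ of size exactly $|F|$, with both shores nontrivial when $F$ is essential, unless the two lifted edges lie inside opposite shores. Outside this exceptional case, $|F|\le k-1$ contradicts the essential $k$-edge-connectivity of $G$ directly, so no two-vertex shore ever needs attention. Moreover, any cut with $|F|\le 3$ must be exceptional, hence essential. So (a) follows from the argument for (b), without Mader's theorem or the unproved ``at least two good pairings'' statement.

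In the exceptional case, the missing idea is symmetry: \emph{each} shore contains a lifted edge.
\begin{itemize}
\item Delete the lifted edge inside $X$ and put $v$ on the other side. This gives a cut of $G$ of size $|F|+2\le k+1$.
\item If $G[X]$ is connected, \cref{cl:sec3-large-cut} forces $|X|=2$, since the other shore contains $v$ and both ends of the other lifted edge.
\item If $G[X]$ splits into the two components joined by the lifted edge, \cref{cl:sec3-large-cut} does not apply, since it concerns bonds; your sketch skips this point. Here the two boundaries sum to $|F|+2\le k+1$, while a component containing an edge would force the sum to be at least $k+4$. So both components are singletons and again $|X|=2$.
\item The same argument gives $|X^c|=2$, hence $|V(G)|=5$, contradicting \cref{cl:sec3-order}.
\end{itemize}
Thus every pairing that does not pair two $z_0$-edges works, and no choice of pairing is needed.

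Your two remedies do not reach this. The ``freedom of pairing'' idea is not carried out, and it would have to satisfy (a), (b), (c) simultaneously. Also, when $v$ has two edges to $z_0$ there is essentially only one admissible pairing. The fallback of contracting the $2K_2$ on $\{x,y\}$ in $G'$ is unjustified. \cref{prop:contraction-connectivity} only transfers connectivity from a graph already known to be essentially $k$-edge-connected, which is exactly what is in doubt for $G'$. So nothing guarantees that $G'/\{x,y\}$ satisfies the hypotheses.

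Finally, once (a) and (b) hold for $G'$, (c) is cleanest inside $G'$. A bridge or disconnection of $G'-z_0$ with shores $A,B$ would give $d_{G'}(z_0)\ge k+2$ unless $|A|=|B|=1$, and the latter contradicts \cref{cl:sec3-order}.
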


\begin{claimproof}
Suppose that $w\in V(G)\setminus\{z_0\}$ has degree $4$, and let
$H=G-z_0$. Since $H$ is $2$-edge-connected,
$2\leq d_H(w)\leq 4$. By \cref{cl:sec3-z3-reduced}, the neighbors of
$w$ in $H$ are distinct. Pair the four edges incident with $w$ and
lift the two pairs. Choose the pairs so that two $wz_0$-edges
are never paired together. If $H-w$ is
disconnected, then $d_H(w)=4$ and $H-w$ has exactly two components;
in this case, choose both pairs across the two components. Let $G'$
be the resulting graph. Then $H'=G'-z_0$ is connected. If an edge
incident with $z_0$ is paired with $wu$, orient the new $z_0u$-edge
at $z_0$ in the same direction as the original $z_0w$-edge.

We first verify the connectivity properties of $G'$. Let
$F=E_{G'}(X,X^c)$ be an edge-cut of $G'$. By restoring $w$ on one
shore, we obtain an edge-cut $F^*$ of $G$ with $|F^*|=|F|$. The only
exception occurs when the two lifted edges lie internally on opposite
shores. In that case, $|F^*|=|F|+2$. In the first case, the
$4$-edge-connectivity of $G$ gives $|F|\geq 4$. Moreover, if $F$ is
essential, the shore containing $w$ can be selected so that $F^*$ is
essential. Hence $|F|\geq k$.

Consider the exceptional case, now both shores contain an edge and thus the edge-cut $F$ is essential. Suppose that $|F|<k$. Each shore
contains both ends of one lifted edge. Fix one shore $X$, delete its
lifted edge, and restore $w$ on the other shore. If the resulting
subgraph on $X$ is connected, then its boundary is an essential
edge-cut of $G$ of size $|F|+2\leq k+1$. By
\cref{cl:sec3-large-cut}, we obtain $|X|=2$. Otherwise, the lifted
edge joins exactly two components of the resulting subgraph on $X$.
The orders of the edge-cuts around these two components have sum
$|F|+2\leq k+1$. If either component contains an edge, essential
$k$-edge-connectivity and $4$-edge-connectivity imply that this sum
is at least $k+4$, a contradiction. Hence both components are
singletons, and again $|X|=2$. The same argument applied to $X^c$
gives $|X^c|=2$. Thus $|V(G')|=4$ and $|V(G)|=5$, contrary to
\cref{cl:sec3-order}. Therefore, $|F|\geq k$ in the exceptional
case as well. It follows that $G'$ is
$4$-edge-connected and essentially $k$-edge-connected.

It remains to show that $H'$ has no bridge. Suppose that a bridge of
$H'$ has shores $A$ and $B$. If $|A|,|B|\geq 2$, then
\[
 1+e(A,z_0)\geq k
 \quad\text{and}\quad
 1+e(B,z_0)\geq k,
\]
contrary to $d_{G'}(z_0)=d_G(z_0)=k+1$. If, say, $A$ is a singleton,
the $4$-edge-connectivity of $G'$ gives $e(A,z_0)\geq 3$. Unless $B$
is also a singleton, the cut around $B$ is essential, so
$e(B,z_0)\geq k-1$. Again, $d_{G'}(z_0)\geq k+2$. If both $A$ and
$B$ are singletons, then $|V(G)|=4$, contrary to
\cref{cl:sec3-order}. Hence $H'$ is
$2$-edge-connected.

The complete lifting operation decreases $|E(G-z_0)|$ by $2$ and preserves
the root preorientation. Minimality gives an extension on $G'$. When
a lifting is reversed, replace each oriented lifted edge with the
corresponding directed two-edge path through $w$. This operation
preserves the imbalance at every vertex and gives an extension on
$G$, a contradiction.
\end{claimproof}

By \cref{cl:sec3-no-four} and essential $k$-edge-connectivity, the
graph $G$ is $5$-edge-connected. Indeed, an edge-cut of order $4$ would
be nonessential and hence would isolate a vertex of degree $4$.

\begin{claim}\label{cl:sec3-three-edge}
For every $w\in V(G-z_0)$, we have
$d_{G-z_0}(w)\geq 3$. Consequently, $G-z_0$ is
$3$-edge-connected.
\end{claim}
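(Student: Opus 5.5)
The plan is to prove the degree bound by contradiction, using the same kind of reduction as in \cref{cl:sec3-large-cut,cl:sec3-no-four}: absorb a bad vertex into the root. Once the degree bound holds, $3$-edge-connectivity will follow from a short cut-counting argument.

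\textbf{Degree bound.} Let $H=G-z_0$, and suppose that $w\in V(H)$ has $d_H(w)\le 2$. Since $H$ is $2$-edge-connected, $d_H(w)=2$. By \cref{cl:sec3-z3-reduced}, the two $H$-neighbours $u,v$ of $w$ are distinct. By \cref{cl:sec3-no-four} and $4$-edge-connectivity, $d(w)\ge 5$, so $e(w,z_0)=d(w)-2\ge 3$.

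Let $G'=G/\{z_0,w\}$, and take the contracted vertex $z_0'$ as the new root. Its degree is
\[
 d_G(z_0)+d_G(w)-2e(w,z_0)=k+1+2-e(w,z_0)\le k.
\]
The edges $z_0'u$ and $z_0'v$ come from $wu$ and $wv$, and we preorient them as follows. The $wz_0$-edges are already oriented by the given preorientation, say with imbalance $c$ at $w$. The two edges $wu,wv$ can contribute $2$, $0$ or $-2$ to the imbalance at $w$, and these values cover all residues modulo $3$. So we orient $wu,wv$ to make the total imbalance at $w$ equal to $0$ modulo $3$. Then the imbalance at $z_0'$ is the old imbalance at $z_0$, minus the contribution of the $wz_0$-edges, plus the contribution of $wu,wv$. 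This equals the sum of the imbalances of $z_0$ and $w$, which is $0$ modulo $3$. Hence the new preorientation is valid.

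By \cref{prop:contraction-connectivity}, $G'$ is $4$-edge-connected and essentially $k$-edge-connected. Also
\[
 |E(G'-z_0')|=|E(H-w)|=|E(H)|-2 .
\]
So once we check that $H-w$ is $2$-edge-connected, minimality gives an extension on $G'$. Restoring $w$ with its chosen orientation then gives an extension on $G$, a contradiction.

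\textbf{The main check: $H-w$ is $2$-edge-connected.} Connectivity is immediate: a disconnection of $H-w$ would give $H$ a bridge at $w$, since $d_H(w)=2$. So suppose $H-w$ has a bridge with shores $A$ and $B$. We count in $G'$, exactly as in the last part of \cref{cl:sec3-no-four}, using $d_{G'}(z_0')\le k$.
\begin{itemize}
 \item If $|A|,|B|\ge 2$, both cuts are essential. Then $e(A,z_0')\ge k-1$ and $e(B,z_0')\ge k-1$, so $d_{G'}(z_0')\ge 2k-2>k$.
 \item If $A$ is a singleton and $|B|\ge 2$, then $e(A,z_0')\ge 3$ and $e(B,z_0')\ge k-1$, so $d_{G'}(z_0')\ge k+2$.
 \item If both $A$ and $B$ are singletons, then $|V(G)|=4$, contrary to \cref{cl:sec3-order}.
\end{itemize}
In every case we get a contradiction.

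\textbf{$3$-edge-connectivity.} Suppose $H$ has an edge-cut $E_H(A,B)$ with at most $2$ edges. By the degree bound just proved, neither shore is a singleton, so $|A|,|B|\ge 2$. Since $H$ is simple and $G[A]$, $G[B]$ are connected, both shores contain edges. Hence the cuts $E_G(A,A^c)$ and $E_G(B,B^c)$ in $G$ are essential. This gives
\[
 e(A,z_0)\ge k-2 \quad\text{and}\quad e(B,z_0)\ge k-2 ,
\]
so $d(z_0)\ge 2k-4>k+1$, contradicting \cref{cl:sec3-root-degree}.
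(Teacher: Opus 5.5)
Your proof is correct and follows the paper's route. You contract $\{w,z_0\}$ into a new root of degree at most $k$, orient the two $H$-edges at $w$ so that $w$ (equivalently the new root) is balanced, and invoke minimality of $|E(G-z_0)|$. The differences are minor and both valid. You check that $H-w$ is $2$-edge-connected by counting in the contracted graph $G'$, as at the end of \cref{cl:sec3-no-four}, rather than via \cref{cl:sec3-large-cut} in $G$. For $3$-edge-connectivity you apply essential $k$-edge-connectivity to both shores, instead of combining $5$-edge-connectivity with \cref{cl:sec3-large-cut}.
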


\begin{claimproof}
Suppose first that $d_{G-z_0}(w)\leq 2$. Since $G-z_0$ is
$2$-edge-connected, equality holds. Since $G$ is
$5$-edge-connected, $e(w,z_0)\geq 3$. Contract $\{w,z_0\}$ to a new
root $w_0$. By \cref{prop:contraction-connectivity}, the contracted
graph remains $5$-edge-connected and essentially
$k$-edge-connected. Orient the two edges incident with $w_0$ that
arise from edges of $G-z_0$ incident with $w$ so that the imbalance
at $w_0$ is $0$ modulo $3$. This is possible because the possible
sums are $-2$, $0$, and $2$, which represent all residues modulo $3$.
The degree of the new root
is at most
\[
 d(z_0)-e(w,z_0)+d_{G-z_0}(w)\leq k.
\]
We show that deleting $w_0$ leaves a $2$-edge-connected graph.

The graph $G-z_0-w$ is connected. Otherwise, one of the two edges of
$G-z_0$ incident with $w$ would be a bridge. Suppose that
$G-z_0-w$ has a bridge with shores $A$ and $B$. The two edges from
$w$ in $G-z_0$ must meet different shores. If $|A|\geq 3$, then
\[
 e(A,A^c)=e(A,z_0)+2\geq k+2
\]
by \cref{cl:sec3-large-cut}. Thus $e(A,z_0)\geq k$, contrary to
$d(z_0)-e(w,z_0)\leq k-2$. The same argument applies to $B$.
Consequently, $|A|,|B|\leq 2$. By \cref{cl:sec3-order},
$|A|=|B|=2$. Essential $k$-edge-connectivity now gives
$e(A,z_0),e(B,z_0)\geq k-2$, and hence
\[
 d(z_0)\geq 2k-1>k+1,
\]
a contradiction. Therefore, $G-z_0-w$ is $2$-edge-connected.

The contraction removes the two edges of $G-z_0$ incident with $w$.
Minimality therefore gives an extension on the contracted graph. When
the contraction is reversed, the prescribed
edges at $z_0$ retain their directions. The prescribed imbalance at
$z_0$ is $0$, and the imbalance at the contracted root is also $0$.
It follows that the imbalance at $w$ is $0$. Thus the extension lifts
to $G$, a contradiction. Therefore, $d_{G-z_0}(w)\geq 3$.

Suppose next that $G-z_0$ has a $2$-edge-cut with shores $A$ and
$B$. The minimum-degree conclusion just proved gives
$|A|,|B|\geq 2$. By \cref{cl:sec3-order}, we may assume that
$|B|\geq 3$. Since $G$ is $5$-edge-connected,
$e(A,z_0)+2\geq 5$, while \cref{cl:sec3-large-cut} gives
$e(B,z_0)+2\geq k+2$. Therefore,
\[
 d(z_0)=e(A,z_0)+e(B,z_0)\geq k+3,
\]
contrary to \cref{cl:sec3-root-degree}. This proves \cref{cl:sec3-three-edge}.
\end{claimproof}

It remains to find a nontrivial $\Z_3$-connected subgraph of $G-z_0$.

\begin{claim}\label{cl:sec3-bipartite-copy}
The graph $G-z_0$ contains $K_{4,4}$ or $K_{4,4}^{-}$ as a subgraph.
\end{claim}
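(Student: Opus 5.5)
Throughout, write $H=G-z_0$ and $n=|V(H)|\geq 5$. By \cref{cl:sec3-z3-reduced}, $H$ is simple and $\Z_3$-reduced. Since $n\geq5$, $H\not\cong K_4$, so \cref{thm:z3-density} gives $|E(H)|\leq 4n-11$. On the other side, $G$ is $5$-edge-connected, so $d_G(v)\geq5$ for every $v$; moreover $d(z_0)=k+1=42$ by \cref{cl:sec3-root-degree}, and $d_H(v)\geq3$ by \cref{cl:sec3-three-edge}.

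The plan is to split $V(H)$ by a degree threshold $t$ (first try $t=21$; it may need tuning). Let $S$ be the set of vertices of $H$ with $d_G(v)\leq t$, and let $L=V(H)\setminus S$, so every vertex of $L$ has $d_G\geq t+1$. By \cref{prop:degree-sum}, applied to $G$ (whose simple graph is not a star, as $H$ is $2$-edge-connected on $\ge5$ vertices), any two adjacent vertices have degree sum at least $k+2=43$. Hence, for $t\leq21$, $S$ is independent in $G$. Every $H$-neighbour of a vertex of $S$ therefore lies in $L$, and by simplicity of $H$ these neighbours are distinct. So $(S,L)$ with the edges of $H$ between them is a simple bipartite graph in which each $w\in S$ has exactly $d_H(w)=d_G(w)-e(w,z_0)$ neighbours in $L$.

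Next I would combine two counts to pin down the sizes.
\begin{enumerate}[(a)]
 \item Summing $G$-degrees over $V(H)$ gives $\sum_{v\in V(H)} d_G(v)=2|E(H)|+42\leq 8n+20$. Using $d_G\geq5$ on $S$ and $d_G\geq t+1$ on $L$, this yields roughly $(t-7)|L|\leq 3|S|+20$, so $L$ is small compared with $S$.
 \item Counting only the edges between $S$ and $L$ gives $\sum_{w\in S}d_H(w)\geq 5|S|-42\leq |E(H)|\leq 4(|S|+|L|)-11$, so $|S|\leq 4|L|+31$.
\end{enumerate}
The vertices of $S$ with $d_H(w)\leq 3$ each send at least $2$ edges to $z_0$, so there are at most $21$ of them. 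The remaining set $A\subseteq S$ consists of vertices with at least $4$ neighbours in $B=L$.

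I would then apply \cref{lem:common-neighbors} with $x=y=z=4$. If $|A|>3\binom{|L|}{4}$, four vertices of $A$ share four neighbours in $L$, and $H$ contains $K_{4,4}$. The main obstacle is the numerics. The bounds above control $|S|$ only linearly in $|L|$, whereas the lemma's threshold grows like $|L|^4$. So a single threshold cannot work for all $|L|$; the argument has to exploit the extra room coming from vertices of degree exactly $5$ or $6$ in $S$ together with the bound $e(S,L)\leq 4n-11$.

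My first attempt to close this gap is a weighted count. Degree $5$ vertices in $S$ that are not adjacent to $z_0$ have $5$ neighbours in $L$. Counting $5$-subsets, or pairs $(w,C)$ with $C$ a $4$-subset of $N_H(w)$ weighted by $\binom{d_H(w)}{4}$, improves the lemma's threshold. The copy of $K_{4,4}^{-}$ is needed exactly for the degenerate situation: one vertex of the $A$-side has only $3$ of the $4$ common neighbours, because one of its edges goes to $z_0$. To handle it, I would also run the lemma with $y=3$ on the vertices with $d_H=4$ and glue. If the counting still falls short, the fallback is to choose $t$ adaptively, or to apply the density bound to $H[S\cup L']$ for a suitable subset $L'\subseteq L$ (the set of high-degree neighbours). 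That makes the $\Z_3$-reduced bound act locally, forces $|L'|$ to be bounded by an absolute constant, and reduces the lemma's condition to a finite numerical check.
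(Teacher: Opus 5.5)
Your framework (an independent set of low-degree vertices, the simple bipartite graph from it into high-degree vertices, the density bound, and \cref{lem:common-neighbors}) is the right one. However, the proposal stops exactly where the proof has to happen. You never verify the hypothesis of \cref{lem:common-neighbors}, and with the threshold $t=21$ it cannot be verified. Your counts (a) and (b) give only $14|L|\le 3|S|+20$ and $|S|\le 4|L|+31$. So $|L|$ may be as large as $56$, while $|S|$ is at most linear in $|L|$. The lemma needs $|A|$ of order $\binom{|L|}{y}$ with $y\in\{3,4\}$. This is already impossible for $|L|\ge 7$ with $(x,y,z)=(4,4,4)$, since $3\binom{7}{4}=105>4\cdot 7+31$. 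The loss comes from using only $d_G\ge 22$ on $L$, whereas \cref{prop:degree-sum} says every neighbour of a degree-$5$ vertex has degree at least $38$. The vertices of degree $8$ to $21$ in your $S$ do not enjoy this property and only inflate both sides. The fallbacks you list are not carried out. The plan also does not address the regimes where the lemma cannot apply at all (few or no vertices of degree at most $7$). There the claim holds only because the configuration is impossible, and that must be proved.

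The missing idea is to make the high side an absolutely bounded set.
\begin{enumerate}[(1)]
 \item Use only $V_5\cup V_6\cup V_7$ as the low side; all their neighbours lie in $V_{\geq 36}$. Discard the vertices of degree $8$ to $35$: they contribute nonnegatively to the excess $\sum_v (d(v)-8)\le 54$, which comes from \cref{thm:z3-density} and $d(z_0)=42$.
 \item Let $T=3|V_5|+2|V_6|+|V_7|$. In the excess bound, $z_0$ contributes $34$ and each vertex of $V'_{\geq 36}$ contributes at least $28$, so $28|V'_{\geq 36}|\le T+20$.
 \item A second count charges the $5|V_5|+6|V_6|+7|V_7|$ edges leaving low vertices to the degrees of $V_{\geq 36}$. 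It gives $243\ge 4|V_5|+12|V_6|+20|V_7|$, hence $T\le 180$ and $|V'_{\geq 36}|\le 7$.
\end{enumerate}
The lemma then becomes a finite check, done by cases on $T$.
\begin{enumerate}[(a)]
 \item For large $T$, the root budget $d(z_0)=42$ forces many vertices with $5$ (resp.\ $4$) distinct neighbours in $V'_{\geq 36}$. This gives $K_{4,4}^{-}$ via $(x,y,z)=(5,3,5)$ or $(5,3,4)$ followed by $(2,1,3)$, or $K_{4,4}$ via $(4,4,4)$.
 \item For $1\le T\le 85$, the set $L_4$ or $L_3$ is nonempty (by the root budget, resp.\ \cref{cl:sec3-three-edge}). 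A vertex in it needs more distinct high neighbours than $|V'_{\geq 36}|$ allows, a contradiction.
 \item $T=0$ is excluded by the degree sum at a minimum-degree vertex of $G-z_0$ together with its three neighbours in $G-z_0$.
\end{enumerate}
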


\begin{claimproof}
Let $n=|V(G)|$. By \cref{cl:sec3-order},
\cref{cl:sec3-z3-reduced}, and \cref{thm:z3-density},
\[
 e(G-z_0)\leq 4(n-1)-11.
\]
Together with \cref{cl:sec3-root-degree}, this gives
\begin{equation}\label{eq:sec3-degree-upper}
 \sum_{v\in V(G)}d(v)=2e(G)\leq 8n+2(k-14).
\end{equation}

Let $V_i$ be the set of vertices of degree $i$ in $G$. Define
$V_{\geq i}=\cup_{t\geq i}V_{t}$ and $V_{i\sim j}=\cup_{i\leq t\leq j}V_t$. By
\cref{prop:degree-sum}, the set $V_{\leq (k+1)/2}$ is independent.
Moreover, every neighbor of a vertex in $V_5$, $V_6$, or $V_7$ has
degree at least $k-3$, $k-4$, or $k-5$, respectively. Hence
\begin{equation}\label{eq:sec3-high-degree-sum}
 \sum_{v\in V_{\geq k-5}}d(v)
 \geq 5|V_5|+6|V_6|+7|V_7|.
\end{equation}
Combining \eqref{eq:sec3-degree-upper} and
\eqref{eq:sec3-high-degree-sum} yields
\begin{equation}\label{eq:sec3-high-count}
 \begin{split}
 8|V_{\geq k-5}|\geq{}&
 2|V_5|+4|V_6|+6|V_7|\\
 &+\sum_{v\in V_{8\sim k-6}}(d(v)-8)-2(k-14).
 \end{split}
\end{equation}
Using \eqref{eq:sec3-high-count} once more in
\eqref{eq:sec3-degree-upper}, we obtain
\begin{align}
 \frac{(k-5)(k-14)}{4}
 \geq{}&
 \frac{k-25}{4}|V_5|
 +\frac{k-17}{2}|V_6|
 +\frac{3k-43}{4}|V_7| \notag\\
 &+\frac{k-5}{8}
 \sum_{v\in V_{8\sim k-6}}(d(v)-8).
 \label{eq:sec3-weighted}
\end{align}

For $k=41$, \eqref{eq:sec3-weighted} becomes
\begin{equation}\label{eq:sec3-k41-weighted}
 \begin{split}
 243\geq{}&
 4|V_5|+12|V_6|+20|V_7|\\
 &+\frac{9}{2}\sum_{v\in V_{8\sim 35}}(d(v)-8),
 \end{split}
\end{equation}
while \eqref{eq:sec3-degree-upper} gives
\begin{align}
 3|V_5|+2|V_6|+|V_7|+54
 \geq{}&
 \sum_{v\in V_{8\sim 35}}(d(v)-8)
 +28|V_{36}|+29|V_{37}| \notag\\
 &+\sum_{v\in V_{\geq 38}}(d(v)-8).
 \label{eq:sec3-k41-degree}
\end{align}

For $i\in\{5,6,7\}$, let $V_{i,j}$ consist of the vertices
$v\in V_i$ such that $e(v,z_0)=j$, and let
$V'_i=V_i\cap V(G-z_0)$. Define
\[
 \begin{split}
 L_5&=V_{5,0}\cup V_{6,\leq 1}\cup V_{7,\leq 2},\\
 L_4&=V_{5,\leq 1}\cup V_{6,\leq 2}\cup V_{7,\leq 3},\\
 L_3&=V_{5,\leq 2}\cup V_{6,\leq 3}\cup V_{7,\leq 4}.
 \end{split}
\]
Every vertex of $L_i$ has at least $i$ distinct neighbors in
$V'_{\geq 36}$. Let
\[
 T=3|V_5|+2|V_6|+|V_7|.
\]
Since $d(z_0)=42$, the contribution of $z_0$ to the last sum in
\eqref{eq:sec3-k41-degree} is $d(z_0)-8=34$. Every vertex of
$V'_{\geq 36}$ contributes at least $28$. Therefore,
\begin{equation}\label{eq:sec3-high-set-bound}
 28|V'_{\geq 36}|\leq T+20.
\end{equation}
We consider the possible values of $T$.

If $T\geq 181$, then $|V_5|+|V_6|+|V_7|\geq 61$. The first three
terms on the right-hand side of \eqref{eq:sec3-k41-weighted} have
sum at least $244$, a contradiction.

Suppose that $170\leq T\leq 180$. Then
$|V_5|+|V_6|+|V_7|\geq 57$ and
$|L_5|\geq 57-d(z_0)=15$. By
\eqref{eq:sec3-high-set-bound}, $|V'_{\geq 36}|\leq 7$. Apply
\cref{lem:common-neighbors} first with
$(x,y,z)=(5,3,5)$ and then with $(x,y,z)=(2,1,3)$. We obtain five
vertices with three common high-degree neighbors and, among these
five vertices, three with one further common neighbor. These vertices
contain a copy of $K_{4,4}^{-}$.

Suppose that $142\leq T\leq 169$. Then there are at least $48$
vertices in $V_{5\sim 7}$ and, by
\eqref{eq:sec3-high-set-bound}, $|V'_{\geq 36}|\leq 6$. If
$|L_5|\geq 7$, two applications of \cref{lem:common-neighbors}, with
$(x,y,z)=(5,3,4)$ and $(x,y,z)=(2,1,3)$, give a copy of
$K_{4,4}^{-}$. If $|L_5|\leq 6$, at least $42$ low-degree vertices
are incident with a root edge. Since $d(z_0)=42$, every low-degree
vertex belongs to $L_4$. Thus $|L_4|\geq 48$. Applying
\cref{lem:common-neighbors} with $(x,y,z)=(4,4,4)$ gives a copy of
$K_{4,4}$.

Suppose that $114\leq T\leq 141$. There are at least $38$ vertices
in $V_{5\sim 7}$ and, by \eqref{eq:sec3-high-set-bound},
$|V'_{\geq 36}|\leq 5$. If
$|L_5|\geq 4$, these four vertices and their five high-degree
neighbors contain a copy of $K_{4,4}$. Otherwise, at least $35$
low-degree vertices are incident with a root edge. Every vertex
outside $L_4$ is incident with at least one additional root edge.
It follows that $|L_4|\geq 31$. Applying
\cref{lem:common-neighbors} with $(x,y,z)=(4,4,4)$ again gives a copy
of $K_{4,4}$.

Suppose that $86\leq T\leq 113$. There are at least $29$
low-degree vertices and, by \eqref{eq:sec3-high-set-bound},
$|V'_{\geq 36}|\leq 4$. Thus
$L_5=\varnothing$. Counting the $42$ root edges shows that at most
$42-29=13$ low-degree vertices lie outside $L_4$. Hence
$|L_4|\geq 16$. Every vertex of $L_4$ is adjacent to all four
vertices of $V'_{\geq 36}$, which gives a copy of $K_{4,4}$.

Suppose that $64\leq T\leq 85$. There are at least $22$
low-degree vertices. At most $21$ of them can lie outside $L_4$,
since each such vertex is incident with at least two root edges.
Hence $L_4\neq\varnothing$, so $|V'_{\geq 36}|\geq 4$. However,
\eqref{eq:sec3-high-set-bound} gives
$|V'_{\geq 36}|\leq 3$, a contradiction.

If $1\leq T\leq 63$, then $L_3\neq\varnothing$ by
\cref{cl:sec3-three-edge}. Thus $|V'_{\geq 36}|\geq 3$, whereas
\eqref{eq:sec3-high-set-bound} gives
$|V'_{\geq 36}|\leq 2$, a contradiction.

It remains to consider $T=0$. Then every vertex has degree at least
$8$. Let $s$ be the minimum degree in $G$ among the vertices of
$G-z_0$. Since
\[
 \sum_{v\in V(G-z_0)}d_G(v)
 =2e(G-z_0)+d(z_0)\leq 8n+12,
\]
\cref{cl:sec3-order} gives $8\leq s\leq 12$. Let $v$ be a vertex
of degree $s$. By \cref{cl:sec3-three-edge}, the vertex $v$ has
three distinct neighbors in $G-z_0$. By \cref{prop:degree-sum},
each of these neighbors has degree at least $43-s$. Therefore,
\[
 \sum_{x\in V(G)}d(x)
 \geq 42+s+3(43-s)+(n-5)s
 =171+(n-7)s>8n+54,
\]
contrary to \eqref{eq:sec3-degree-upper}. This proves the claim.
\end{claimproof}

By part (ii) of \cref{lem:basic-facts}, each subgraph in
\cref{cl:sec3-bipartite-copy} is $\Z_3$-connected. This contradicts
\cref{cl:sec3-z3-reduced} and completes the proof.
\end{proof}


\section{Remarks}
\label{sec4}

In this paper, we prove that every $4$-edge-connected essentially $41$-edge-connected graph admits a nowhere-zero $3$-flow. Here the constant 41 is not optimal and we do not attempt to optimize it, since the current proof becomes much more complicated when $k$ is slightly smaller. Moreover, there is a limitation of our method caused by the density of $\Z_3$-reduced graphs $G$.  Note that the density bound $|E(G)|\leq 4|V(G)|-10$ in \cref{thm:z3-density} is not sufficient to  prove a very small bound on essential edge connectivity. Another interesting problem is to seek a $\Z_3$-connectivity analogue of \cref{thm-3-flow-thm}.

\begin{problem}
    Is it true that there exists  a constant $k$ such that every $4$-edge-connected essentially $k$-edge-connected graph is $\Z_3$-connected ?
\end{problem}
Note that there exist $4$-edge-connected essentially $6$-edge-connected graphs that are not $\Z_3$-connected as shown in \cite{Hasanvand2023}. The problem would have an affirmative answer if one could prove a stronger conjecture that every graph with three edge-disjoint spanning trees is $\Z_3$-connected.

After completing the proofs of our main result, we used ChatGPT to explore possible improvement to the constants. This led us to distinct methods yielding  much better constants for Theorems \ref{thm:forbidden-cuts} and \ref{thm-3-flow-thm}, albeit without a uniform framework for the proofs. We include the resulting arguments in Appendix \ref{appendix-20-23}.

\section*{Declaration of AI usage}
The main body of this paper, excluding the Appendix, is entirely the authors' own work--all mathematical ideas, problem formulations, proof strategies, and arguments were conceived and developed solely by  the authors without AI-assistance. For the main text, ChatGPT was used exclusively as a proofreading aid to identify and correct typographical and linguistic errors and to improve the language of the manuscript. The Appendix, in contrast, involved AI‑assisted arguments; nevertheless, all AI‑generated content in the Appendix was carefully checked, revised, and independently verified by the authors.

\section*{Acknowledgement}
Jiaao Li and Xinyuan Li are partially supported by National Key Research and Development Program of China (No. 2022YFA1006400), National Natural Science Foundation of China (No. 12571371), Natural Science Foundation of Tianjin (No. 24JCJQJC00130), and the Fundamental Research Funds for the Central Universities, Nankai University.


\renewcommand{\appendixname}{Appendix}
\renewcommand{\thesection}{\Alph{section}} 
\appendix
\titleformat{\section}[block]{\normalfont\Large\bfseries}{\appendixname~\thesection}{1em}{}

\section[ImprovedBound]{Improved bounds on constants}
\label{appendix-20-23}

In this appendix, we improve both bounds stated in the introduction through different methods. The
first result lowers the essential edge-connectivity from $41$ to $23$.
The second result shows that it is enough to forbid edge-cuts of sizes
from $6$ to $20$.

\begin{theorem}\label{thm:essential-23}
Every $4$-edge-connected essentially $23$-edge-connected graph admits a nowhere-zero $3$-flow.
\end{theorem}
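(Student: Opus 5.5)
The plan is to run the same scheme as for \Cref{thm-3-flow-thm}, with a stronger extension statement for $k=23$. Concretely, I will prove: if $G$ is $4$-edge-connected and essentially $k$-edge-connected with $k=23$, $z_0\in V(G)$ with $d(z_0)\le k+1$, $G-z_0$ is $2$-edge-connected, and the preorientation at $z_0$ is valid, then it extends to a modulo $3$-orientation. The reduction from this statement to \Cref{thm:essential-23} is exactly the one already given: handle $|V(G)|\le 3$ directly, use \Cref{thm:ltwz-six} when $\delta(G)\ge 6$, and otherwise root at a minimum-degree vertex. The bridge computation there only needs $k+2>5$.

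For the extension statement, take a minimum counterexample with respect to $|E(G-z_0)|$ and then maximum $d(z_0)$. The structural claims carry over almost verbatim with $k$ in place of $41$:

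\begin{enumerate}[(a)]
\item $G-z_0$ is $\Z_3$-reduced and simple.
\item $d(z_0)=k+1$.
\item $|V(G-z_0)|\ge 5$. The inequalities $\tfrac32(k+2)-9\ge k+2$ and $2(k+2)-12\ge k+2$ hold for $k\ge 22$.
\item Essential cuts with both shores of size at least $3$ have at least $k+2$ edges.
\item There are no degree-$4$ vertices, via complete lifting, so $G$ is $5$-edge-connected.
\item $G-z_0$ is $3$-edge-connected.
\end{enumerate}

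I would recheck each inequality in these claims for $k=23$, but none appears to use more than $k\ge 22$.

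The real work is the final counting claim, which must still find a $\Z_3$-connected subgraph, and this is where I expect the main obstacle. With $k=23$, \eqref{eq:sec3-weighted} gives the coefficient $(k-25)/4<0$ on $|V_5|$, so the $V_5$ vertices are no longer controlled by the density bound alone.

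My first attempt is a discharging argument instead of the linear combination. Give each vertex charge $d(v)-8$; by \Cref{thm:z3-density} the total is at most $2(k-14)=18$. Each vertex of degree $5$, $6$ or $7$ has all its neighbours of degree at least $k+2-d(v)\ge 18$ by \Cref{prop:degree-sum}. Each such low vertex pulls charge from its high neighbours. Counting multiplicities, a high vertex $u$ of degree $d$ can have at most $d$ low neighbours and carries surplus $d-8$. A simple transfer rule therefore fails only if many low vertices share few high neighbours.

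That failing case is exactly where \Cref{lem:common-neighbors} applies. Few high vertices in $G-z_0$ must absorb all the low vertices, and since $d(z_0)=24$, root edges are scarce. Many low vertices then lie in $L_4$, that is, they have at least $4$ distinct high neighbours in $G-z_0$, and the lemma yields a $K_{4,4}$ or $K_{4,4}^-$, contradicting (a) via \Cref{lem:basic-facts}.

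If this direct counting is too weak, which I suspect for $V_5$ vertices with two root edges, the fallback is an extra reducible configuration. Take a degree-$5$ vertex $w$ with $e(w,z_0)\ge 2$ and contract $w$ into the root, as in the proof of (f), when the new root degree stays at most $k+1$. Otherwise lift its edges as in (e). Either move forces most $V_5$ vertices to have at most one root edge, after which the case analysis on $T$ closes as in the $k=41$ proof.
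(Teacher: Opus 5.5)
Your reduction to a rooted statement, and the structural claims (a)--(f), do go through at $k=23$. The gap is the final step: you never show that $G-z_0$ contains $K_{4,4}$ or $K_{4,4}^{-}$, and the tools you list cannot show it. With $d(z_0)=24$, \cref{thm:z3-density} gives $\sum_v(d(v)-8)\le 18$, and the root alone uses $16$ of this. By \cref{prop:degree-sum}, a degree-$5$ vertex has only neighbours of degree at least $20$. A degree-$20$ vertex has excess $12=\tfrac35\cdot 20$, which is exactly what its degree-$5$ neighbours need. So degree-$5$ vertices are neutral in the count: the coefficient $(k-25)/4$ is negative, and it is still $0$ with the sharp neighbour bound $20$. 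Nothing bounds $|V_5|$ or the number of high vertices; you only get $10|V'_{\ge 18}|\le T+2$. Your assertion that a transfer rule fails ``only if many low vertices share few high neighbours'' is therefore false.

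Here is a concrete configuration. Take $h$ high vertices and $4h+8$ degree-$5$ vertices, and let $G-z_0$ be bipartite between them. Join $24$ of the degree-$5$ vertices once to $z_0$; give these four high neighbours and the others five. Give the high vertices degree $20$ or $21$. Then:
\begin{enumerate}[(1)]
\item $e(G-z_0)=20h+16\le 4|V(G-z_0)|-11$;
\item every edge has degree sum at least $25$, and $\delta(G)=5$;
\item $G-z_0$ is simple, and every vertex of $G-z_0$ has at least four neighbours there.
\end{enumerate}
For suitable large $h$, the neighbourhoods can be chosen so that no two low vertices share three high neighbours, for instance from $5$-point pieces of lines in an affine plane. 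Then there is no $K_{4,4}$ or $K_{4,4}^{-}$ at all. Here $|A|\approx 4|B|$, while \cref{lem:common-neighbors} needs $|A|$ to exceed a polynomial in $|B|$, so no case analysis on $T$ can close.

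The fallback does not repair this. Contracting a degree-$5$ vertex $w$ with $e(w,z_0)=2$ into the root gives root degree $24-2+3=25>k+1$, which violates hypothesis (i). A vertex of odd degree admits no complete lifting: two liftings leave a degree-$1$ vertex, which can never be balanced modulo $3$. In any case, in the configuration above every low vertex has at most one root edge, so eliminating two-root-edge vertices would not help.

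The paper avoids the large root altogether. In a simple $\Z_3$-reduced minimal counterexample it roots at a vertex of degree $4$ or $5$. It lifts the other degree-$4$ vertices using Mader's theorem. It then shows, via Nash-Williams--Tutte and a discharging on each partition quotient, that the lifted graph has four edge-disjoint spanning trees, and applies the rooted Han--Lai--Li theorem. In that argument the neutrality of degree-$5$ vertices is harmless. One needs only the non-strict bound $|E(Q)|\ge 4(r-1)$, whose slack of $8$ absorbs a root of degree at most $7$. Your route instead needs a strict violation of $4n-11$ after a root of degree $k+1$ has consumed most of the budget.
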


\begin{theorem}\label{thm:forbidden-20}
Every $4$-edge-connected graph without an edge-cut of any
size from $6$ to $20$ admits a nowhere-zero $3$-flow.
\end{theorem}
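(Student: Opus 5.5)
The statement immediately above is \Cref{thm:forbidden-20}; the companion \Cref{thm:essential-23} is stated just before it. My plan is to reuse the reduction from \Cref{sec:preliminary} that derived \Cref{thm:forbidden-cuts} from \Cref{thm-3-flow-thm} and \Cref{thm:extension}. For this I need an extension theorem with smaller constants. Concretely, I would first establish a version of \Cref{thm:extension} for $k=21$, with root degree at most $k+1=22$ and $G-z_0$ $2$-edge-connected. It should hold under a hypothesis weaker than essential $21$-edge-connectivity, namely: $G$ is $4$-edge-connected and has no essential edge-cut of size $6$ to $20$.

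With that in hand, the reduction runs as before. Take a minimum counterexample $G$. If it has no essential $4$- or $5$-cut, it has no essential cut of size less than $21$, so I apply the modified extension theorem at a vertex of minimum degree. Since $\delta(G)\le 5$ (otherwise use \Cref{thm:ltwz-six}), condition (ii) follows as in the proof of \Cref{thm-3-flow-thm}. Otherwise, choose an essential $4$- or $5$-cut $E(A,B)$ with $|B|$ minimum. Contract $B$ and use minimality, then extend into $G/A$ with root degree at most $5\le 22$, exactly as in the displayed proof. The minimal choice of $B$ ensures that $G/A$ has no essential $4$- or $5$-cut.

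The main work is the extension theorem with $k=21$, which I would prove by the same minimum-counterexample scheme as \Cref{sec:pf1.5}. Claims \ref{cl:sec3-z3-reduced}--\ref{cl:sec3-three-edge} go through with small numerical checks. For instance, in \Cref{cl:sec3-order} the triangle case needs $\tfrac32(k+2)-9\ge k+2$, i.e.\ $k\ge 16$, and the $4$-cycle case needs $2(k+2)-12\ge k+2$, which is fine. \Cref{cl:sec3-large-cut} and the lifting \Cref{cl:sec3-no-four} use only sums of cut sizes against $d(z_0)=k+1$, so they also survive for $k=21$.

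The step I expect to be the obstacle is the analogue of \Cref{cl:sec3-bipartite-copy}. With $k=21$, inequality \eqref{eq:sec3-weighted} degenerates: the coefficient $\tfrac{k-25}{4}$ of $|V_5|$ is negative. So the density bound of \Cref{thm:z3-density} alone no longer limits the number of degree-$5$ vertices. My first attempt would be a discharging argument on $G-z_0$ instead of a global count. Degree-$5$ vertices are adjacent only to vertices of degree at least $k-3=18$, by \Cref{prop:degree-sum}. Each high vertex $h$ would send charge to its low neighbours, and I would show that if every low vertex ends with charge at least $8$, the bound $e\le 4n-11$ is contradicted. Where charge is insufficient, I would argue locally: many degree-$5$ vertices with at most one root edge share at least four high neighbours, and \Cref{lem:common-neighbors} then yields $K_{4,4}$ or $K_{4,4}^-$. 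If this fails numerically, the fallback is to add a further reducible configuration to the list of $\Z_3$-connected subgraphs, for example $K_{3,t}$-type graphs with one extra edge among the high side, and treat the case of at most three high vertices directly.

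For \Cref{thm:essential-23} the same plan applies with $k=23$, and it should be easier: there, essential $23$-edge-connectivity gives the degree-sum bound $d(u)+d(v)\ge 25$ throughout.
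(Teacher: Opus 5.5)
Your reduction to a rooted extension theorem is sound, and Claims \ref{cl:sec3-z3-reduced}--\ref{cl:sec3-three-edge} do go through at $k=21$. (You do not need the weaker hypothesis that allows essential $4$- and $5$-cuts, since your reduction already produces essentially $21$-edge-connected graphs, and the proof of \Cref{cl:sec3-large-cut} would not survive it.) The real problem is the step you flag as the obstacle: it is the whole content of the proof, and it cannot be done with the tools you list.

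At $k=21$, \Cref{thm:z3-density} and $d(z_0)=22$ only tell you that the vertices of $G-z_0$ have average degree at most $8$ in $G$. \Cref{prop:degree-sum} only says that neighbours of a degree-$5$ vertex have degree at least $18$. Now take a large bipartite graph whose low vertices all have degree $5$ and whose high vertices all have degree $18$. It has average degree $180/23<8$, so for large order it is compatible with the edge bound even after attaching a root of degree $22$. It satisfies $d(u)+d(v)\ge 23$ on every edge. It can also be chosen with large girth, so it contains no $4$-cycle, let alone $K_{4,4}^-$.

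Hence neither discharging against $4n-11$ nor \Cref{lem:common-neighbors} can force the reducible subgraph. At $k=41$ the count caps $|V'_{\geq 36}|$ at $7$, and that cap is what forces common neighbours. At $k=21$ nothing bounds the number of high vertices; this is the limitation remarked on in \Cref{sec4}. Your fallback does not help either: $K_{3,t}$ plus an edge joining two of its three high vertices is exactly $K_{3,t}^{+}$, which has no nowhere-zero $3$-flow and so is not $\Z_3$-connected. For calibration, your rooted theorem would imply that every $4$-edge-connected, essentially $21$-edge-connected graph has a nowhere-zero $3$-flow, which is stronger than \Cref{thm:essential-23}.

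The missing idea is to keep the trivial-cut part of the hypothesis rather than weakening it to essential cuts. The paper works in the class $\mathcal C_q$ ($q\ge 21$) of $4$-edge-connected graphs in which every edge-cut, trivial or not, has size $4$, $5$, or at least $q$. It proves a rooted extension only for a root of degree $4$ or $5$ (\Cref{thm:sec4-gap-rooted}). The steps are:
\begin{enumerate}[(1)]
\item Nontrivial $4$- and $5$-cuts are split by induction, and $\Z_3$-connected subgraphs of $G-z$ are contracted.
\item After this, every vertex has degree $4$, $5$, or at least $q$. The low-degree vertices together with $z$ are independent, and any two high vertices have local edge-connectivity at least $21$.
\item Degree-$4$ vertices are completely lifted using Mader's theorem.
\item A Nash-Williams--Tutte partition count (\Cref{lem:sec4-rooted-packing}, in which the root costs at most $5$ in total) gives four edge-disjoint spanning trees in $J-z$.
\item \Cref{thm:hll-4tree} then makes $J-z$ $\Z_3$-connected, and reversing the liftings finishes.
\end{enumerate}
This degree gap, which your passage to essential cuts throws away by allowing every degree from $6$ to $20$, is what makes the partition count work. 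No density bound for $\Z_3$-reduced graphs is used.
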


We first recall three tools used in the proofs. For distinct vertices
$x$ and $y$, let $\lambda_G(x,y)$ denote the maximum number of
pairwise edge-disjoint $x$--$y$ paths in $G$. Lifting two edges $sx$
and $sy$ at $s$ means deleting them and adding a new edge $xy$. A
complete lifting at a vertex of degree $4$ consists of two such
liftings followed by deletion of the isolated vertex. A vertex $s$ is
\emph{nonseparating} if $G-s$ is connected.

\begin{theorem}[Mader~\cite{Mader1978}]\label{thm:sec4-mader}
Let $s$ be a nonseparating vertex of degree $4$ in a graph $G$, and assume that $s$ has at least two distinct
neighbors. There is a complete lifting at $s$ that preserves
 $\lambda_G(x,y)$ for every two distinct vertices $x,y\neq s$.
\end{theorem}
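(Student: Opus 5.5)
The plan is to reduce the statement to the existence of one admissible \emph{single} lifting at $s$, and then to prove that existence with the standard dangerous-set (uncrossing) argument of Mader and Frank.

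\textbf{Reduction to one lift.} Write $r(x,y)=\lambda_G(x,y)$ for $x,y\neq s$. Call a lifting of two edges $sx,sy$ \emph{admissible} if the resulting graph $G'$ satisfies $\lambda_{G'}(u,v)\geq r(u,v)$ for all $u,v\neq s$. The reverse inequality is automatic, since lifting never increases local edge-connectivity.

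Suppose one admissible lift has been made. Then $s$ has degree $2$, with remaining edges $sa$ and $sb$. Lift these as well, deleting the result if it is a loop, and then delete $s$.
\begin{itemize}
\item Any system of edge-disjoint $u$--$v$ paths with $u,v\neq s$ uses the two remaining edges at $s$ only as a consecutive pair $a\,s\,b$ inside a single path.
\item That pair can be replaced by the new edge $ab$.
\item If $a=b$, the detour through $s$ is redundant and can simply be shortcut.
\end{itemize}
So the second lift is always admissible, and it suffices to find one admissible pair among the $\binom{4}{2}$ choices.

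\textbf{Characterising failure.} By Menger's theorem, lifting $sx,sy$ fails exactly when there is a \emph{dangerous} set $X\subseteq V(G)\setminus\{s\}$ with the following properties:
\begin{itemize}
\item $x,y\in X$;
\item $d_G(X)\leq R(X)+1$, where $R(X)=\max\{r(u,v): u\in X,\ v\notin X\cup\{s\}\}$;
\item $V(G)\setminus(X\cup\{s\})\neq\varnothing$.
\end{itemize}
Indeed, the lift decreases $d(X)$ by $2$ precisely when both ends lie in $X$.

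Every set $Y$ not containing $s$ satisfies $d(Y)\geq R(Y)$ and $d(Y\cup\{s\})\geq R(Y)$. From this, a dangerous set $X$ contains the far ends of at most two of the four edges at $s$. If it contained three, then
\[
d(X\cup\{s\})\leq d(X)-3+1\leq R(X)-1,
\]
which contradicts the requirement. This is where nonseparating-ness is used: it guarantees that the complement $V(G)\setminus(X\cup\{s\})$ still contains a vertex realising $R(X)$, and that it is nonempty in the first place.

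\textbf{Uncrossing (the main obstacle).} Suppose every pair is non-admissible. Fix an edge $e_1=sx_1$. The pairs $\{e_1,e_2\}$, $\{e_1,e_3\}$, $\{e_1,e_4\}$ are blocked by dangerous sets $X_2,X_3,X_4$, all containing $x_1$.

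I would take two of these, say $X_2$ and $X_3$, and apply the usual crossing inequalities:
\begin{itemize}
\item submodularity, $d(A)+d(B)\geq d(A\cap B)+d(A\cup B)$;
\item posimodularity, $d(A)+d(B)\geq d(A\setminus B)+d(B\setminus A)$;
\item skew-supermodularity of $R$, meaning that one of the two matching inequalities for $R$ holds.
\end{itemize}
With these I would show that either $X_2\cup X_3$ is dangerous, or a degree count at $s$ gives a contradiction. The count compares $d(X_2)+d(X_3)$ with the contribution $d(X_2\cap X_3,\{s\})$ of the edges at $s$. In the first case $X_2\cup X_3$ contains the ends of $e_1,e_2,e_3$, which contradicts the three-edge bound above.

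Care is needed in one degenerate situation: several edges at $s$ going to the same vertex. Then "the end of $e_i$" does not distinguish edges, and the three-edge bound must be counted by edges rather than by vertices. The hypothesis that $s$ has at least two distinct neighbours rules out the bad case where all four edges are parallel, in which every lift produces a loop. I expect the case analysis in the uncrossing step, especially when $X_2\setminus X_3$ or $X_2\cup X_3\cup\{s\}$ exhausts the graph, to be the main work. The other steps are routine.
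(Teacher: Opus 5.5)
Your first step, making one admissible lift and then lifting the two remaining edges at the degree-$2$ vertex, is exactly the paper's remark after citing Mader. The paper does not reprove the single-lift theorem. Your characterisation of failure by dangerous sets and the bound $d(s,X)\le 2$ are also correct. That bound needs only $V\setminus(X\cup\{s\})\neq\varnothing$, which is part of the definition, so nonseparating-ness is not used there. Moreover, if all pairs are blocked, the bound already forces the four ends at $s$ to be distinct, so the parallel-edge worry disappears.

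\textbf{The gap is the uncrossing step.} The dichotomy you announce for two blocking sets $X_2,X_3$ is false. In the posimodular case you get $2\ge q(X_2\setminus X_3)+q(X_3\setminus X_2)+2d(s,X_2\cap X_3)$, where $q=d-R$. Since $d(s,X_2\cap X_3)=1$ (only $e_1$), nothing is contradicted. In the submodular case you only get $q(X_2\cup X_3)\le 2-q(X_2\cap X_3)$, and $q(X_2\cup X_3)=2$ can occur. A concrete example is the wheel $W_4$ with hub $s$, rim $a\,c\,b\,d\,a$, and $e_1=sa$:
\begin{itemize}
\item the pairs $\{sa,sc\}$ and $\{sa,sd\}$ are blocked by $\{a,c\}$ and $\{a,d\}$, each with boundary $4=R+1$;
\item their union $\{a,c,d\}$ has boundary $5$, hence surplus $2$, so it is not dangerous;
\item the pair $\{sa,sb\}$ is admissible.
\end{itemize}
So no argument using only two blocking sets can reach a contradiction. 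The failure of the third pair, and the hypothesis on $s$, must enter.

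\textbf{What two blocking sets actually give.} Use the identity $q(V\setminus Z)=q(Z)-d(s,Z)+d(s,V\setminus Z)$. Then one of the following holds:
\begin{itemize}
\item $X_2\setminus X_3$ and $X_3\setminus X_2$ are tight, and $G-s$ has no edge between $X_2\cap X_3$ and $V\setminus(X_2\cup X_3)$; or
\item $X_2\cap X_3$ and $V\setminus(X_2\cup X_3)$ are tight, and $G-s$ has no edge between $X_2\setminus X_3$ and $X_3\setminus X_2$.
\end{itemize}

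\textbf{How to finish.} In the second case take $Y=X_4$. In the first case take $Y=V\setminus X_4$, which is also dangerous because $d(s,X_4)=2$.
\begin{enumerate}
\item A tight set and a dangerous set sharing an $s$-edge cannot satisfy the posimodular alternative, so their union is dangerous. Use this to enlarge $Y$ by the two tight atoms.
\item The complement $W$ of the enlarged set is dangerous. It lies in the other two atoms, so it splits as $W=B\cup B'$ with $e(B,B')=0$, and each of $B,B'$ receives one $s$-edge.
\item We have $d(W)=d(B)+d(B')\le R(W)+1$ and $R(W)\le\max\{R(B),R(B')\}\le\max\{d(B),d(B')\}$. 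Hence $\min\{d(B),d(B')\}\le 1$.
\item So the single $s$-edge into one piece is its only boundary edge, i.e.\ a bridge. This is impossible because $G-s$ is connected.
\end{enumerate}
This is where nonseparating-ness is genuinely used, and this step is missing from your sketch.
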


Indeed, Mader's theorem gives the first admissible lifting. Suppressing
the remaining vertex of degree $2$ preserves the same local
edge-connectivities and gives the stated complete lifting.

\begin{theorem}[Nash-Williams--Tutte
\cite{NashWilliams1961,Tutte1961}]\label{thm:sec4-nwt}
A graph $G$ contains $t$ pairwise edge-disjoint spanning
trees if and only if
\[
 e_G(\mathcal P)\geq t(|\mathcal P|-1)
\]
for every partition $\mathcal P$ of $V(G)$, where
$e_G(\mathcal P)$ denotes the number of edges joining distinct parts.
\end{theorem}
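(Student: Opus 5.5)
We treat this classical statement as a special case of the matroid union (matroid partition) theorem of Edmonds and Nash-Williams, applied to $t$ copies of the graphic (cycle) matroid of $G$. The paper uses the result only as a tool, so we will not reprove matroid union. Write $n=|V(G)|$, and let $r$ be the rank function of the graphic matroid $M(G)$. For $F\subseteq E(G)$ we have $r(F)=n-c(F)$, where $c(F)$ is the number of components of the spanning subgraph $(V(G),F)$.

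\emph{Necessity.} Suppose $T_1,\dots,T_t$ are edge-disjoint spanning trees and $\mathcal P$ is any partition of $V(G)$. Contracting each part of $\mathcal P$ turns each $T_i$ into a connected multigraph on $|\mathcal P|$ vertices. Hence each $T_i$ uses at least $|\mathcal P|-1$ edges joining distinct parts. Summing over the $t$ disjoint trees gives $e_G(\mathcal P)\ge t(|\mathcal P|-1)$.

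\emph{Sufficiency.} We invoke the matroid union theorem. The maximum size of a set $I\subseteq E(G)$ that decomposes as $I=I_1\cup\dots\cup I_t$ with each $I_i$ a forest equals
\[
\min_{F\subseteq E(G)}\bigl(|E(G)\setminus F|+t\,r(F)\bigr).
\]
We show this minimum is at least $t(n-1)$. Fix $F$, and let $\mathcal P$ be the partition of $V(G)$ into the vertex sets of the components of $(V(G),F)$, so $|\mathcal P|=c(F)$. Every edge of $G$ joining two distinct parts of $\mathcal P$ lies outside $F$, so $|E(G)\setminus F|\ge e_G(\mathcal P)\ge t(|\mathcal P|-1)$ by hypothesis. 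Therefore
\[
|E(G)\setminus F|+t\,r(F)\ \ge\ t(|\mathcal P|-1)+t(n-|\mathcal P|)=t(n-1).
\]
Consequently there are $t$ edge-disjoint forests with at least $t(n-1)$ edges in total. Each forest on $n$ vertices has at most $n-1$ edges, so each has exactly $n-1$ edges and is a spanning tree. The case $t=0$ is trivial.

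The only substantial ingredient is the matroid union rank formula, which we cite rather than reprove. If a self-contained argument were wanted, the natural route would be Edmonds' augmenting-path proof of matroid partition, or Tutte's original exchange argument on a maximal family of forests. That would be the main obstacle, since the reduction above is routine.
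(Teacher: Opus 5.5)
Your proof is correct. Note, however, that the paper gives no proof of this statement. It quotes the Nash-Williams--Tutte theorem as a classical black box and uses it only in the two spanning-tree packing lemmas of the appendix. So the comparison is really with the original 1961 proofs, which were direct graph-theoretic exchange arguments on maximal families of forests.

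Your route deduces the statement from the Edmonds--Nash-Williams matroid union rank formula applied to $t$ copies of the cycle matroid. This is the standard modern derivation, and both halves check out:
\begin{itemize}
\item \emph{Necessity.} Contracting the parts keeps each $T_i$ connected on $|\mathcal P|$ vertices, so each $T_i$ has at least $|\mathcal P|-1$ crossing edges.
\item \emph{Sufficiency, key step.} For an arbitrary $F$, the components of $(V(G),F)$ form a partition whose crossing edges all lie in $E(G)\setminus F$. The hypothesis then bounds $|E(G)\setminus F|+t\,r(F)$ below by $t(n-1)$.
\item \emph{Sufficiency, counting.} Since each forest has at most $n-1$ edges, each of the $t$ forests has exactly $n-1$ edges and is therefore spanning.
\end{itemize}

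Your approach makes clear where the partition condition comes from. It is the rank-formula minimum, where one may restrict to sets $F$ that are unions of components. It also generalizes immediately, for example to covering by forests or to other matroids. It does not reduce the depth of the argument, though. The matroid union theorem is strictly stronger than the statement and carries all of the difficulty. So you have replaced one citation by a more general one, which is logically fine, as you yourself acknowledge.
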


In terms of spanning trees, Han, Lai, and
Li~\cite{HanLaiLi2018} proves the following sufficient condition for $\Z_3$-connectivity.
\begin{theorem}[Han--Lai--Li~\cite{HanLaiLi2018}]\label{thm:hll-4tree}
Every graph containing four edge-disjoint spanning trees is $\Z_3$-connected.
\end{theorem}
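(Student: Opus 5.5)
We argue by a minimal counterexample, reducing to the density bound of \cref{thm:z3-density}. Suppose the statement fails, and let $G$ be a counterexample with $|V(G)|$ minimum. Since $G$ has a spanning tree, it is connected. If $|V(G)|=1$, then $G$ is trivially $\Z_3$-connected, because the only $\Z_3$-boundary is $0$ and the empty orientation realizes it. So $|V(G)|\geq 2$.

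\emph{Step 1: $G$ is $\Z_3$-reduced.} Suppose $H$ is a nontrivial $\Z_3$-connected subgraph of $G$, and consider $G/H$. Contracting a vertex set keeps every spanning tree connected and spanning in the quotient after we discard the edges that become loops. Any cycle created inside a contracted tree can be broken by deleting edges, so each tree still contains a spanning tree of $G/H$. Hence $G/H$ still contains four edge-disjoint spanning trees. Since $H$ is nontrivial, $|V(G/H)|<|V(G)|$, and minimality shows that $G/H$ is $\Z_3$-connected. By \cref{prop-extension}, $G$ is then $\Z_3$-connected, a contradiction. So $G$ is $\Z_3$-reduced. In particular, by part (i) of \cref{lem:basic-facts}, $G$ contains no $2K_2$, so $G$ is simple.

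\emph{Step 2: count edges.} Four edge-disjoint spanning trees give $|E(G)|\geq 4(|V(G)|-1)$. Alternatively, apply \cref{thm:sec4-nwt} to the partition of $V(G)$ into singletons. If $2\leq|V(G)|\leq 3$, simplicity gives
\[
|E(G)|\leq\binom{|V(G)|}{2}\leq 3<4(|V(G)|-1),
\]
a contradiction. If $|V(G)|\geq 4$, then $G$ is connected and $\Z_3$-reduced, so \cref{thm:z3-density} gives
\[
4|V(G)|-4\leq|E(G)|\leq 4|V(G)|-10,
\]
which is absurd. This contradiction completes the argument.

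The only point requiring care is the claim in Step 1 that contraction preserves four edge-disjoint spanning trees. This follows directly, as explained there, or from \cref{thm:sec4-nwt}: every partition of $V(G/H)$ lifts to a partition of $V(G)$ with the same number of parts and the same crossing edges. Everything else is immediate. The heavy lifting is done by the density theorem, which is stated earlier in the paper and which we are permitted to assume, although historically it postdates the cited result.
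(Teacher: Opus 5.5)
Your two steps are correct as written. The images of four edge-disjoint spanning trees of $G$ still contain four edge-disjoint spanning trees of $G/H$. A $\Z_3$-reduced graph is simple by part (i) of \cref{lem:basic-facts}. Finally, $4(n-1)$ exceeds $\binom{n}{2}$ for $2\le n\le 3$ and exceeds $4n-10$ for $n\ge 4$. But all this establishes is the implication from \cref{thm:z3-density} to the statement. The paper gives no proof of the statement; it imports it from Han--Lai--Li. There it is proved by the partial flow-extension method of Thomassen and Lov\'asz--Thomassen--Wu--Zhang: a stronger rooted extension statement (the form recorded here as \cref{thm:sec4-hll-rooted}, with a root of degree at most $7$ and $G-z$ $2$-edge-connected) is established for a minimal counterexample.

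The gap is that the black box you invoke is not independent of the statement. Your Step~2 shows that the bound $|E(G)|\le 4n-10$ for connected $\Z_3$-reduced graphs already asserts something strong: every graph on at least two vertices with four edge-disjoint spanning trees has a nontrivial $\Z_3$-connected subgraph. Combined with the routine contraction of Step~1, that is the whole theorem. So any proof of \cref{thm:z3-density} must contain the substance of the Han--Lai--Li theorem, and it was in fact obtained later, building on the Han--Lai--Li result. As a proof, your argument is therefore circular. The one genuinely hard ingredient is an actual mechanism for producing $\beta$-orientations from the tree packing, namely the flow-extension argument from a low-degree root. You do not supply it; you assume it through a stronger cited result. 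Within the paper's bookkeeping, where both theorems are external citations, the deduction is valid, but it does not prove the theorem.
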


We also use the following rooted form of a theorem of Han, Lai, and
Li~\cite[Theorem~1.9(ii)]{HanLaiLi2018}.

\begin{theorem}[Han--Lai--Li~\cite{HanLaiLi2018}]\label{thm:sec4-hll-rooted}
Let $G$ contain four pairwise edge-disjoint spanning trees. Let
$z\in V(G)$ satisfy $d_G(z)\leq 7$, and assume that $G-z$ is
$2$-edge-connected. Then every valid preorientation at $z$ extends to a modulo $3$-orientation of $G$.
\end{theorem}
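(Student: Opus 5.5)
The plan is a minimum counterexample argument, in the spirit of the proof of \Cref{thm:extension-section3}, that ends in a density contradiction with \cref{thm:z3-density}. Choose a counterexample $(G,z)$ with $|E(G-z)|$ minimum, together with a valid preorientation at $z$ that does not extend. Extending the preorientation means finding a $\beta$-orientation of $G-z$. Here $\beta(v)$ is minus the net contribution at $v$ of the preoriented edges joining $v$ to $z$. Since the preorientation is valid, $\sum_{v\ne z}\beta(v)\equiv 0\pmod 3$, so $\beta$ is a $\Z_3$-boundary of $G-z$.

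\emph{Step 1: $G-z$ is $\Z_3$-reduced.} Suppose $H$ is a nontrivial $\Z_3$-connected subgraph of $G-z$, and pass to $G/H$. Every partition of $V(G/H)$ lifts to a partition of $V(G)$ with the same number of parts and the same number of crossing edges. By \cref{thm:sec4-nwt}, $G/H$ therefore still contains four edge-disjoint spanning trees. Moreover, $d(z)$ is unchanged, and $(G/H)-z=(G-z)/H$ is still $2$-edge-connected. Since $|E((G/H)-z)|<|E(G-z)|$, minimality gives an extension on $G/H$. By \cref{prop-extension}, applied to the boundary $\beta$, this extends through $H$, a contradiction. In particular, by \cref{lem:basic-facts}(i), $G-z$ contains no $2K_2$, so it is simple.

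\emph{Step 2: counting.} Let $n'=|V(G-z)|$. Four edge-disjoint spanning trees give $|E(G)|\ge 4(|V(G)|-1)=4n'$. Since $d(z)\le 7$, this yields $|E(G-z)|\ge 4n'-7$. On the other hand, $G-z$ is connected and $\Z_3$-reduced. So if $n'\ge 4$, \cref{thm:z3-density} gives $|E(G-z)|\le 4n'-10$, a contradiction. Hence $n'\le 3$.

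\emph{Step 3: small cases.} If $n'=1$, every edge of $G$ is incident with $z$ and is already oriented. The imbalance at the other vertex is the negative of that at $z$, which is $0$ modulo $3$, so the preorientation itself is an extension, a contradiction. The case $n'=2$ is impossible: a simple $2$-edge-connected graph on two vertices does not exist. If $n'=3$, then $G-z$ is a triangle, so $|E(G)|\le 3+7=10<12=4n'$, contradicting the existence of four edge-disjoint spanning trees.

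The step needing the most care is Step 1. There I must make sure that contracting a $\Z_3$-connected subgraph preserves all three hypotheses, namely the tree packing (via \cref{thm:sec4-nwt}), the bound on $d(z)$, and $2$-edge-connectivity of $G-z$. I must also check that \cref{prop-extension} is applied to the specific boundary $\beta$ induced by the preorientation, not merely to modulo $3$-orientations. Once Step 1 is in place, the density count in Step 2 is immediate, because $4n'-7$ exceeds $4n'-10$ by a comfortable margin.
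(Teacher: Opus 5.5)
Your argument is correct, given the results the paper imports, and it takes a genuinely different route from the paper. The paper gives no proof of this statement; it simply cites Theorem~1.9(ii) of Han--Lai--Li. You instead derive it from \cref{prop-extension}, \cref{thm:sec4-nwt}, part (i) of \cref{lem:basic-facts} and, decisively, the density bound \cref{thm:z3-density}. The contraction step is sound. Tree packing, $d(z)$ and the $2$-edge-connectivity of $G-z$ all survive, and \cref{prop-extension} is correctly applied to $G-z$ with the boundary $\beta$ induced by the root edges. The only loose end is the degenerate case $V(H)=V(G-z)$, where $(G/H)-z$ is a single vertex. Either admit that as an instance, so your case $n'=1$ disposes of it, or note that $G-z$ is then itself $\Z_3$-connected.

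Your route is short, and it gives a slightly stronger conclusion. The same count works for $d(z)\le 8$: for $n'\ge 4$ you only need $4n'-d(z)>4n'-10$, and for $n'=3$ only $3+d(z)<12$. The value $8$ is sharp. Take a triangle $v_1v_2v_3$ and join $z$ to each $v_i$ by three edges, two directed into $v_i$ and one out. This graph has four edge-disjoint spanning trees, $d(z)=9$, and a valid preorientation. That preorientation has no extension, because every $v_i$ would have to be a sink of the triangle.

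The cost is independence. All the difficulty is moved into \cref{thm:z3-density}, a later and deeper theorem, so your proof is a reduction within the paper's toolbox. If the proof of that density bound itself uses Han--Lai--Li's tree-packing results, your argument cannot replace their direct proof as a self-contained one.
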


\subsection{Essential edge connectivity}

The following partition lemma is the core of the proof of
\cref{thm:essential-23}.

\begin{lemma}\label{lem:sec4-ore-packing}
Let $J$ be a graph with a distinguished vertex $z$.
Assume that
\begin{enumerate}[(i)]
 \item $J$ is $4$-edge-connected and essentially
       $21$-edge-connected;
 \item $4\leq d_J(z)\leq 7$, and $d_J(v)\geq 5$ for every
       $v\neq z$;
 \item $d_J(u)+d_J(v)\geq 25$ for every edge $uv\in E(J)$.
\end{enumerate}
Then $J$ contains four pairwise edge-disjoint spanning trees.
\end{lemma}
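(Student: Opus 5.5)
We verify the Nash-Williams--Tutte condition of \cref{thm:sec4-nwt} with $t=4$. Fix a partition $\mathcal P$ of $V(J)$ with $p=|\mathcal P|\geq 2$ parts. Since $2e_J(\mathcal P)=\sum_{X\in\mathcal P}d_J(X)$, it suffices to show
\[
 \sum_{X\in\mathcal P}\bigl(d_J(X)-8\bigr)\geq -8 .
\]
We split the proof into cases according to whether the complement of some part contains an edge, and then use a discharging argument.

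\emph{Degenerate case.} Suppose some part $Y$ has the property that $V(J)\setminus Y$ spans no edge. Then every other part is a singleton, these singletons form an independent set, and every edge counted in $e_J(\mathcal P)$ is incident with one of them. Hence $e_J(\mathcal P)$ is the sum of the degrees of the $p-1$ singletons. By (ii), at most one of them is $z$, of degree at least $4$, and the rest have degree at least $5$. So $e_J(\mathcal P)\geq 5(p-2)+4\geq 4(p-1)$.

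\emph{Main case.} Otherwise, for every part $X$ containing an edge, the complement of $X$ also contains an edge. Deleting $E_J(X,X^c)$ then leaves at least two nontrivial components. This holds even if $X$ is disconnected, because we use the ``deleting edges'' formulation of essential connectivity. Hence $d_J(X)\geq 21$ by (i).

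The only parts with negative charge $d_J(X)-8$ are the following singletons:
\begin{itemize}
 \item $\{v\}$ with $d_J(v)\in\{5,6,7\}$, with deficit $3$, $2$ or $1$;
 \item $\{z\}$, with deficit at most $4$.
\end{itemize}
Each such low vertex $v$ sends its deficit uniformly along its edges:
\begin{itemize}
 \item $3/5$ per edge if $d_J(v)=5$;
 \item $1/3$ per edge if $d_J(v)=6$;
 \item $1/7$ per edge if $d_J(v)=7$;
 \item at most $1$ per edge from $z$, and $z$ has at most $7$ edges.
\end{itemize}
By (iii), the low vertices of degree $5,6,7$ are pairwise nonadjacent. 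In fact every neighbour of a low vertex has degree at least $18$, and at least $20$ if the low vertex has degree $5$. Consequently, all charge is received by parts $Y$ that are either high-degree singletons or nontrivial parts. In both cases $d_J(Y)\geq 18$.

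It remains to check that each receiving part $Y$, with $D=d_J(Y)$, stays at charge at least $0$, up to a global slack of $8$ absorbed by the $-8$ on the right-hand side.

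\emph{Singleton receivers $Y=\{w\}$.} If $w$ has a degree-$5$ neighbour, then $D\geq 20$ and $w$ receives at most $3D/5\leq D-8$. If not, $w$ receives at most $D/3\leq D-8$ from non-root low vertices.

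\emph{Nontrivial receivers.} Here $D\geq 21$, so the surplus is $D-8\geq 13$, against at most $3D/5$ received. This is fine for $D\geq 20$.

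\emph{Charge from $z$.} The only delicate point is the contribution of $z$. It sends at most $d_J(z)-8+8\leq 4$ units beyond the per-edge bounds above, over at most $7$ edges. We absorb this in one of two ways:
\begin{itemize}
 \item by the global slack $+8$ (since we only need total charge at least $-8$, the deficit of $z$ may simply be left uncovered); or
 \item when a receiver is saturated, by the observation that a neighbour of $z$ has degree at least $25-7=18$.
\end{itemize}
Concretely, we let $z$ send nothing and keep its deficit of at most $4$, which is at most $8$.

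\emph{Expected obstacle.} The main obstacle is making this discharging bookkeeping airtight. This concerns receivers of degree exactly $18$ or $19$ adjacent to both $z$ and vertices of degree $6$ or $7$. It also concerns nontrivial parts with $21\leq D<20\cdot\frac{5}{3}$. The first thing I would try is the choice above: $z$ keeps its deficit, and the $-8$ slack covers it. The verification then reduces to the inequalities
\[
 \tfrac35 D\leq D-8\ (D\geq 20),\qquad \tfrac13 D\leq D-8\ (D\geq 12),
\]
which both hold in the relevant ranges. Summing the charges gives $\sum_X(d_J(X)-8)\geq -4\geq -8$, so $e_J(\mathcal P)\geq 4(p-1)$. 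The lemma then follows from \cref{thm:sec4-nwt}.
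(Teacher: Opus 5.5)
Your proof is correct and is essentially the paper's argument. Both use Nash-Williams--Tutte, the same degenerate case, the bound $d_J(X)\ge 21$ for edge-containing parts, and a discharging to threshold $8$ driven by (iii). You run the discharging in reverse: low singletons push their deficits onto neighbours of degree $\ge 18$ (or $\ge 20$ next to a degree-$5$ vertex), and the slack of $8$ absorbs the root's deficit. The paper instead works in the quotient $Q$, lets high-degree vertices send $(d-8)/d$ per edge, and shows that the unique possible degree-$4$ vertex ends with positive charge.

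There are two harmless slips. In the degenerate case the other parts need not be singletons, but $e_J(\mathcal P)=\sum_{v\notin Y}d_J(v)\ge 4+5(p-2)$ still holds because at least $p-1$ vertices lie outside $Y$. Also, an edgeless non-singleton receiver has $d_J(Y)\ge 18+4$, so your bound $3D/5\le D-8$ covers it as well.
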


\begin{proof}
Let $\mathcal P=\{V_1,\ldots,V_r\}$ be a partition of $V(J)$, and
let $Q$ be obtained by contracting every part and deleting the
resulting loops. Let
\[
 |E(Q)|=e_J(\mathcal P)
 \quad\text{and}\quad
 d_Q(V_i)=d_J(V_i).
\]
The assertion is immediate when $r=1$. When $r=2$, it follows from
the $4$-edge-connectivity of $J$. Assume that $r\geq 3$.

We first consider one exceptional configuration. Suppose that
$J[V_i]$ contains an edge and $J-V_i$ is edgeless. If $z\in V_i$,
then
\[
 e_J(\mathcal P)
 =\sum_{j\neq i}d_Q(V_j)\geq 5(r-1).
\]
If $z\notin V_i$, and at most one part outside $V_i$ contains $z$, then
\[
 e_J(\mathcal P)\geq 4+5(r-2)=5r-6.
\]
Both bounds are at least $4(r-1)$. We may therefore assume that the
complement of every edge-containing part also contains an edge.

Notice that every vertex of $Q$ has degree at least $4$. Moreover, we claim that at most one
vertex of $Q$ has degree $4$. Indeed, assume that $d_Q(V_i)=4$.
Since this cut is not essential, one shore is independent. If $V_i$
is independent and does not contain $z$, then $d_Q(V_i)\geq 5$. If
$V_i$ contains $z$ and another vertex, then $d_Q(V_i)\geq 4+5$.
Thus the only possibility on this shore is $V_i=\{z\}$ and
$d_J(z)=4$. If the complementary shore is independent, then it
contains vertices in at least two parts, and its boundary is at least
$4+5$. This is again a contradiction.

We next show that every edge $V_iV_j$ of $Q$ satisfies
\begin{equation}\label{eq:sec4-quotient-ore}
 d_Q(V_i)+d_Q(V_j)\geq 25.
\end{equation}
If one of the two parts contains an edge, its boundary is essential
and has size at least $21$; the other boundary has size at least $4$.
If both parts are independent, let $xy$ be an edge of $J$ with
$x\in V_i$ and $y\in V_j$. Then
\[
 d_J(x)\leq d_Q(V_i)
 \quad\text{and}\quad
 d_J(y)\leq d_Q(V_j),
\]
and \eqref{eq:sec4-quotient-ore} follows from assumption (iii).

We shall complete the proof of this lemma by applying discharging method.
Give each vertex of $Q$ initial charge equal to its degree. A vertex
of degree at least $9$ sends
\[
 \frac{d_Q(v)-8}{d_Q(v)}
\]
along each incident edge and therefore retains charge $8$. By
\eqref{eq:sec4-quotient-ore}, a vertex of degree
$i\in\{5,6,7\}$ finishes with charge at least
\[
\begin{array}{c|c}
 i&\text{final charge}\\ \hline
 5&5+5(20-8)/20=8,\\
 6&6+6(19-8)/19=180/19>8,\\
 7&7+7(18-8)/18=98/9>8.
\end{array}
\]
A vertex of degree $8$ retains charge $8$. If the possible
degree-$4$ vertex occurs, all its neighbors have degree at least $21$,
so its final charge is at least
\[
 4+4(21-8)/21=\frac{136}{21}>0.
\]
Consequently,
\[
 2|E(Q)|>8(r-1).
\]
Since $|E(Q)|$ is an integer, $|E(Q)|\geq 4(r-1)$. The partition
$\mathcal P$ is arbitrary, so \cref{thm:sec4-nwt} completes the
proof.
\end{proof}

We now prove the rooted statement needed for
\cref{thm:essential-23}. 

\begin{theorem}\label{thm:sec4-essential-rooted}
Let $q\geq 23$. Let $G$ be a simple, $4$-edge-connected, essentially
$q$-edge-connected graph, and let $z\in V(G)$ satisfy
$d_G(z)\leq 7$. Every valid preorientation at $z$ extends to a
modulo $3$-orientation of $G$.
\end{theorem}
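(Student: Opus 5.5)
We argue by a minimum counterexample, with the goal of reducing to \cref{thm:sec4-hll-rooted}. Among all counterexamples $(G,z)$, choose one with $|V(G)|$ minimum and, subject to that, $|E(G)|$ minimum. The plan is to verify the hypotheses of \cref{lem:sec4-ore-packing} for $J=G$ with $z$ as the distinguished vertex. That lemma then gives four edge-disjoint spanning trees. It remains to check that $G-z$ is $2$-edge-connected, and then \cref{thm:sec4-hll-rooted} extends the preorientation, which is a contradiction.

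\textbf{Step 1: small and degenerate cases.} If $d_G(z)\le 3$, the $4$-edge-connectivity fails unless $|V(G)|=1$. So $4\le d_G(z)\le 7$, and we may assume $|V(G)|\ge 3$. Next we show $G-z$ is $2$-edge-connected, exactly as in the derivation of \Cref{thm-3-flow-thm} from \Cref{thm:extension}. Suppose $G-z$ is disconnected. Then some component receives at most $\lfloor 7/2\rfloor=3$ edges from $z$, which contradicts $4$-edge-connectivity. Suppose instead $G-z$ has a bridge with shores $A,B$. If both $A\cup\{z\}$ side and $B$ side contain edges, essential $q$-connectivity forces $d(z)\ge q-1+3>7$. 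Otherwise one shore is a single vertex of degree at most $1+d(z,\cdot)$; by $4$-edge-connectivity it takes at least $3$ root edges, and the other shore forces at least $q-1$ more, again a contradiction. The case where $|V(G)|$ is tiny, so that $G-z$ is a star or small, is handled directly. Since $G$ is simple, $G-z$ being $2$-edge-connected means it has a cycle, and \cref{prop:degree-sum} applies.

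\textbf{Step 2: degree conditions (ii) and (iii).} Condition (iii) comes from \cref{prop:degree-sum}, which gives $d(u)+d(v)\ge q+2\ge 25$, once we exclude the star case for the underlying simple graph. That case is impossible because $G-z$ contains a cycle. Condition (i) holds since $q\ge 23\ge 21$. The substantive part of (ii) is to rule out vertices $w\ne z$ of degree $4$. For this I would use minimality together with Mader's lifting, \cref{thm:sec4-mader}. The vertex $w$ is nonseparating, since a separation would give a small essential cut. Simplicity gives $w$ at least two distinct neighbours. A complete lifting at $w$ preserves all local edge-connectivities, so the lifted graph $G'$ is $4$-edge-connected.

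\textbf{Step 3: the main obstacle.} The hardest part is showing that $G'$ is still simple and still essentially $q$-edge-connected, so that minimality applies. A lifted edge could duplicate an existing edge or create a loop. It could also produce a small essential cut whose lift in $G$ splits the two lifted edges, making the cut larger by $2$. For parallel edges, I would contract the resulting $2K_2$, which is $\Z_3$-connected by \cref{lem:basic-facts} and \cref{prop-extension}. This also reduces the graph, and contraction preserves the connectivity hypotheses by \cref{prop:contraction-connectivity}. The only wrinkle is if the contraction merges into $z$; then the root degree must be rechecked, and I would re-root as in Claim~\ref{cl:sec3-three-edge}. For the essential cut issue, I would argue as in \cref{cl:sec3-no-four}: an essential cut of size below $q$ in $G'$ gives a cut of size below $q+2$ in $G$. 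The degree-sum bound $d(u)+d(v)\ge q+2$ then forces each shore to be tiny, which contradicts $|V(G)|$ being large enough. Having excluded these, we apply minimality to $G'$ and undo the lifting by replacing each lifted edge with a directed path through $w$, which gives an extension for $G$. So every $v\ne z$ has degree at least $5$. With this, \cref{lem:sec4-ore-packing} and \cref{thm:sec4-hll-rooted} finish the proof.
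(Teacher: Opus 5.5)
Steps 1 and 2 are fine, but Step 3 has a genuine gap. To invoke minimality, the lifted graph $G'$ must satisfy \emph{all} hypotheses of the theorem with the same $q$, and your argument secures neither essential $q$-edge-connectivity nor simplicity.

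\emph{Essential connectivity.} Take an essential cut $F$ of $G'$ with $|F|\leq q-1$ whose two lifted edges lie on opposite shores. It pulls back to an essential cut of $G$ of size $|F|+2$. When $|F|\in\{q-2,q-1\}$ this cut is perfectly legal in $G$, so there is no contradiction. In \cref{cl:sec3-no-four} the contradiction came from \cref{cl:sec3-large-cut}. That claim was proved by splitting $G$ along a cut of size at most $k+1$ and using the contracted side as a new root, which requires the root-degree budget to dominate the cut size. Here the budget is $7$ while the cut has size about $q$, so no analogue is available. The bound $d(u)+d(v)\geq q+2$ only controls two-vertex shores. What Mader's lifting actually yields, since neighbours of $w$ are only guaranteed degree $q-2$, is essential connectivity of roughly $q-2$. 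For $q\in\{23,24\}$ this falls below the theorem's threshold, so the induction does not close.

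\emph{Simplicity.} The new edge $xy$ may duplicate an existing edge. After you contract that $2K_2$ into a vertex $u$, you get a double edge $uz$ whenever $x$ and $y$ are both neighbours of $z$. This cannot be repaired. The root edges are preoriented, so \cref{prop-extension} cannot be applied to $\{u,z\}$. Re-rooting as in \cref{cl:sec3-three-edge} would produce a root of degree far above $7$. That trick worked only because the budget there was $k+1$ and the absorbed vertex sent just two edges into $G-z_0$.

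The paper avoids both problems by never re-entering the theorem. It lifts every degree-$4$ vertex $s\neq z$ at once; these are independent, nonadjacent to $z$, and have four distinct neighbours of degree at least $q-2\geq 21$. This gives a graph $J$, to which it applies \cref{lem:sec4-ore-packing} and \cref{thm:sec4-hll-rooted} directly. Neither result needs simplicity, and the lemma deliberately asks only for essential $21$-edge-connectivity, which absorbs the loss of $2$.

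That connectivity comes from preserved local edge-connectivity:
\begin{enumerate}[(i)]
 \item A cut of $J$ of size at most $20$ cannot have vertices of degree at least $21$ on both shores.
 \item Hence one shore consists of low-degree vertices untouched by the liftings, and the cut was already a cut of $G$.
 \item Such a cut is therefore essential in $G$ of size at most $20$, which is impossible, unless a shore is a single edge $uv$.
 \item A single-edge shore is excluded by $d_J(u)+d_J(v)\geq 25$.
\end{enumerate}
One then rechecks that $J-z$ (not just $G-z$) is $2$-edge-connected, and reverses the liftings. Replacing your Step 3 with this direct argument makes your outline correct.
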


\begin{proof}
Let
\[
 S=\{s\in V(G)\setminus\{z\}:d_G(s)=4\}.
\]
By \cref{prop:degree-sum}, the set $S$ is independent, every neighbor
of a vertex in $S$ has degree at least $q-2\geq 21$, and no vertex of
$S$ is adjacent to $z$.

We completely lift every vertex of $S$. Consider an unprocessed
vertex $s\in S$. Previous liftings add edges only between vertices
outside $S$, so the four edges incident with $s$ and its four distinct
neighbors remain unchanged. The vertex $s$ is nonseparating. Indeed,
if $G-s$ had two components, each component would send at least four
edges to $s$, contrary to $d_G(s)=4$. By
\cref{thm:sec4-mader}, a complete lifting at $s$ preserves every
local edge-connectivity between the remaining vertices. Repeating
this operation gives a graph $J$ on
$V(G)\setminus S$ such that
\begin{align}
 \lambda_J(x,y)&=\lambda_G(x,y)
 &&(x\neq y\in V(J)), \label{eq:sec4-local-preserved}\\
 d_J(x)&=d_G(x)
 &&(x\in V(J)). \label{eq:sec4-degree-preserved}
\end{align}
In particular, $J$ is $4$-edge-connected,
$4\leq d_J(z)\leq 7$, and every other vertex has degree at least $5$.

For an edge created by a lifting,
both ends have degree at least $21$. 
For an original edge, it follows from \cref{prop:degree-sum} and \eqref{eq:sec4-degree-preserved} that the sum of degrees of two ends is at least $25$. 
Hence every edge $uv$ of $J$ satisfies
\begin{equation}\label{eq:sec4-split-ore}
 d_J(u)+d_J(v)\geq 25.
\end{equation}

\begin{claim}\label{cl:sec4-split-essential}
The graph $J$ is essentially $21$-edge-connected.
\end{claim}

\begin{claimproof}
We first show that no edge-cut of size at most $20$ has at least two
edges on each shore. Suppose that $E_J(A,A^c)$ is such an edge-cut.
If both shores contain a vertex of degree at least $21$, let
$x\in A$ and $y\in A^c$ be two such vertices. By
\eqref{eq:sec4-local-preserved},
$\lambda_G(x,y)\leq 20$. A minimum $x$--$y$ cut in $G$ may be
selected as an edge-cut. Since its size is less than $q$, it is not
essential and therefore has a singleton shore. This shore must be
$\{x\}$ or $\{y\}$, whose boundary has size at least $21$, a
contradiction.

Consequently, one shore, say $A$, contains only vertices of degree at
most $20$. No vertex of $A$ is adjacent in $G$ to a vertex of $S$.
Thus no lifting changes an edge incident with $A$, and
\begin{equation}\label{eq:sec4-cut-unchanged}
 E_J(A,A^c)=E_G(A,V(G)\setminus A)
 \quad\text{and}\quad
 J[A]=G[A].
\end{equation}
Both shores contain an edge in $G$. Indeed, an added edge on the
complementary shore of $J$ is the image of a two-edge path whose
internal vertex belongs to $S$, and the entire path lies on the same
shore in $G$. Hence \eqref{eq:sec4-cut-unchanged} gives an essential
edge-cut of $G$ of size at most $20$, a contradiction.

Now suppose that $J$ has an essential edge-cut of size at most $20$.
By the preceding paragraph, one shore contains exactly one edge
$uv$. Since the shore is connected, it consists of $u$ and $v$ and
$uv$ is its only edge. By \eqref{eq:sec4-split-ore}, the boundary of
$\{u,v\}$ has size at least $25-2=23$, a contradiction. This proves
the claim.
\end{claimproof}

\begin{claim}\label{cl:sec4-root-deletion}
The graph $J-z$ is $2$-edge-connected.
\end{claim}

\begin{claimproof}
If $J-z$ were disconnected, two of its components would each send at
least four edges to $z$. This would give $d_J(z)\geq 8$. Hence
$J-z$ is connected.

Suppose that $J-z$ has a bridge with shores $A$ and $B$. The graph
$J-z$ has at least four vertices. If $S=\varnothing$, this follows
from the simplicity and $4$-edge-connectivity of $G$. If
$S\neq\varnothing$, every vertex of $S$ has four distinct neighbors
in $V(J)\setminus\{z\}$. We may therefore assume that $|A|\geq 2$.
Then $J[A]$ contains an edge. The other shore together with $z$ also
contains an edge. This is clear when $|B|\geq 2$; when $B=\{b\}$,
the $4$-edge-connectivity of $J$ forces an edge between $b$ and $z$.
By \cref{cl:sec4-split-essential},
\[
 d_J(A)\geq 21
 \quad\text{and}\quad
 d_J(B)\geq 4.
\]
The bridge is counted in both boundaries. Thus
\[
 d_J(z)=d_J(A)+d_J(B)-2\geq 23,
\]
contrary to $d_J(z)\leq 7$. This proves the claim.
\end{claimproof}

By \cref{cl:sec4-split-essential}, \eqref{eq:sec4-split-ore}, and the
degree properties above, the graph $J$ satisfies all hypotheses of
\cref{lem:sec4-ore-packing}. It therefore contains four pairwise
edge-disjoint spanning trees. By \cref{cl:sec4-root-deletion} and
\cref{thm:sec4-hll-rooted}, the valid orientation at $z$ extends
to a modulo $3$-orientation of $J$.

Finally, reverse the complete liftings. Replace each oriented added
edge with the corresponding directed two-edge path through the
restored vertex. Each restored vertex receives one entering and one
leaving edge from each lifted pair. Hence every restored vertex is
balanced, all previous imbalances remain unchanged, and the
preorientation at $z$ is retained. This gives the required modulo
$3$-orientation of $G$.
\end{proof}

\begin{proof}[Proof of \Cref{thm:essential-23}]
For otherwise, suppose that $G$ is a counterexample with $|V(G)|$ minimum. If $G$ contains a nontrivial
$\Z_3$-connected subgraph $K$, then $G/K$ remains
$4$-edge-connected and essentially $23$-edge-connected by
\cref{prop:contraction-connectivity}. Minimality and
\cref{prop-extension} together give a modulo $3$-orientation of $G$.
Thus $G$ is $\Z_3$-reduced.

By part (i) of \cref{lem:basic-facts}, the graph $G$ has no parallel
edges. Hence $G$ is simple. If $\delta(G)\geq 6$, then $G$ is
$6$-edge-connected. By \cref{thm:ltwz-six}, the graph $G$ is $\Z_3$-connected, a
contradiction.

We may therefore let $z$ be a vertex with
$4\leq d_G(z)\leq 5$. The edges incident with $z$ clearly admit a valid
preorientation. By \cref{thm:sec4-essential-rooted}, this
preorientation extends to a modulo $3$-orientation of $G$, a
contradiction. The equivalence between modulo
$3$-orientations and nowhere-zero $3$-flows completes the proof.
\end{proof}

\subsection{Forbidden edge-cuts}

For an integer $q\geq 6$, let $\mathcal C_q$ denote the class of
$4$-edge-connected graphs in which every edge-cut has
size $4$, $5$, or at least $q$. Thus the hypothesis of
\cref{thm:forbidden-20} is exactly membership in $\mathcal C_{21}$.
The next lemma is a refined partition count. The important point is
that the edges at the root cause a total loss of at most $5$ over the
whole partition.

\begin{lemma}\label{lem:sec4-rooted-packing}
Let $J$ be a graph with partition $V(J)=H\mathbin{\cup}L\mathbin{\cup}\{z\}$.
Assume that $H\neq\varnothing$, $4\leq d_J(z)\leq 5$, and every
neighbor of $z$ belongs to $H$. Assume also that $L$ is independent,
every vertex of $L$ has degree $5$, and
\[
 \lambda_J(x,y)\geq 21
 \qquad\text{for all distinct }x,y\in H.
\]
Then $J-z$ contains four pairwise edge-disjoint spanning trees.
\end{lemma}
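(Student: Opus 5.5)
We verify the Nash-Williams--Tutte condition of \cref{thm:sec4-nwt} for $J' = J-z$ with $t=4$. Fix a partition $\mathcal P=\{V_1,\ldots,V_r\}$ of $V(J')=H\cup L$ with $r\geq 2$, and let $Q$ be the quotient multigraph obtained by contracting the parts. We must show $e_{J'}(\mathcal P)=|E(Q)|\geq 4(r-1)$. Call a part \emph{heavy} if it meets $H$ and \emph{light} otherwise. A light part is a subset of the independent set $L$. Since each vertex of $L$ has degree $5$ in $J$ and no neighbor equal to $z$, a light part $V_i$ satisfies $d_{J'}(V_i)=5|V_i|\geq 5$. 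Let $h\geq 1$ be the number of heavy parts and $\ell=r-h$ the number of light parts.

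For heavy parts we use the local connectivity hypothesis. If $h\geq 2$, pick a vertex of $H$ in a heavy part $V_i$ and one in another heavy part. Since $\lambda_J(x,y)\geq 21$, we get $d_J(V_i)\geq 21$. Passing to $J'$ removes only the edges from $V_i$ to $z$. Summed over all heavy parts, at most $d_J(z)\leq 5$ edges are lost, which is the ``total loss of at most $5$'' advertised before the lemma. Hence
\[
\sum_{V_i\text{ heavy}} d_{J'}(V_i)\;\geq\; 21h-5 .
\]
If $h=1$, this lower bound is not available, but we will not need it.

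Now count $2|E(Q)|=\sum_i d_Q(V_i)$ with a discharging argument, as in \cref{lem:sec4-ore-packing}. Each edge of $Q$ incident with a light part goes to a heavy part, since $L$ is independent. Hence
\[
|E(Q)|\geq \sum_{\text{light}} d_{J'}(V_i)\geq 5\ell,
\]
and, when $h\geq 2$,
\[
|E(Q)|\geq \tfrac12\Bigl(\sum_{\text{heavy}}d_{J'}(V_i)+\sum_{\text{light}}d_{J'}(V_i)\Bigr)\geq \tfrac12(21h-5+5\ell).
\]

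Case $h=1$: here $r-1=\ell$, and $|E(Q)|\geq 5\ell\geq 4\ell$, so we are done.

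Case $h\geq 2$: there are two sub-cases.
\begin{itemize}
\item If $5\ell\geq 4(r-1)=4h+4\ell-4$, that is, $\ell\geq 4h-4$, the first bound suffices.
\item Otherwise $\ell\leq 4h-5$. Then we want $\tfrac12(21h-5+5\ell)\geq 4h+4\ell-4$, i.e.\ $13h+3\geq 3\ell$. This holds because $3\ell\leq 12h-15<13h+3$.
\end{itemize}
Integrality is not even needed here.

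The main obstacle is the first inequality when $h\geq 2$. We must make sure that $d_J(V_i)\geq 21$ genuinely follows from $\lambda_J$ between $H$-vertices lying in different parts. It does: every $x$--$y$ edge-disjoint path family crosses $\partial V_i$. We must also make sure the loss to $z$ is bounded globally by $d_J(z)\leq 5$ rather than per part. This holds because each edge at $z$ has exactly one other end, which lies in a single heavy part, since $N(z)\subseteq H$.

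Finally, we must check that $J'$ is connected for $r=2$. This follows since the computation above covers $r=2$ directly: either $h=1,\ell=1$ giving $|E(Q)|\geq 5$, or $h=2$ giving $|E(Q)|\geq (42-5)/2>4$. This completes the verification, and \cref{thm:sec4-nwt} yields the four edge-disjoint spanning trees in $J-z$.
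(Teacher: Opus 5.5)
Your proof is correct and follows the paper's argument: the Nash-Williams--Tutte condition for $J-z$, the split into parts meeting $H$ and parts inside $L$, the global bound $\sum_{\text{heavy}} d_{J-z}(V_i)\ge 21h-5$ (valid because all root edges land in $H$), and the same two-case arithmetic ($\ell\ge 4h-4$ versus $\ell\le 4h-5$, matching the paper's $s\ge 4r-4$ versus $s\le 4r-5$). The only difference is that the paper first normalizes a violating partition (maximum deficiency, then maximum number of parts) so that light parts are singletons and $L$-vertices in heavy parts have no crossing edges; you rightly avoid this, since $d_{J-z}(V_i)=5|V_i|\ge 5$ for any light part and crossing edges at $L$-vertices inside heavy parts are simply heavy--heavy edges already counted in the heavy boundaries.
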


\begin{proof}
Let $Q=J-z$. Suppose that a partition $\mathcal P$ of $V(Q)$ violates
the Nash-Williams--Tutte inequality for four spanning trees.
Let $4(|\mathcal P|-1)-e_Q(\mathcal P)$ denote the deficiency of partition $\mathcal P$.
Among all
such partitions, select one with deficiency maximized. And subject to this choice, select one with as many parts as possible.

Let $v\in L$ belong to a nonsingleton part. If $v$ is separated as a
singleton part, the number of parts increases by one, while the
number of crossing edges increases by, say, $h$, the number of edges
from $v$ to the remaining vertices of its old part. The new
deficiency is larger by $4-h$. Hence $h\geq4$. If $h=4$, the
deficiency is unchanged and the new partition has more parts. It
follows that $h=5$. Thus all five edges of $v$ remain in its old
part. Moreover, every part containing only vertices of $L$ is a
singleton, since $L$ is independent.

Let $r$ denote the number of parts meeting $H$, and let $s$ denote
the number of the remaining parts. The latter are singleton vertices
of $L$. If $r=1$, then
\[
 e_Q(\mathcal P)=5s\geq 4s=4(|\mathcal P|-1),
\]
a contradiction. Assume that $r\geq 2$, and let $a$ be the number of
edges joining two distinct parts that meet $H$. Each part meeting
$H$ separates two vertices of $H$ and therefore has boundary at least
$21$ in $J$. Summing these $r$ boundary sizes gives
\[
 2a+5s+d_J(z)\geq 21r.
\]
Since $d_J(z)\leq 5$,
\begin{equation}\label{eq:sec4-global-root-loss}
 2a+5s\geq 21r-5.
\end{equation}
Also,
\[
 e_Q(\mathcal P)=a+5s.
\]
If $s\geq 4r-4$, then
\[
 e_Q(\mathcal P)\geq 5s
 \geq 4r+4s-4.
\]
If $s\leq 4r-5$, then \eqref{eq:sec4-global-root-loss} gives
\begin{align*}
 e_Q(\mathcal P)-4(r+s-1)
 &\geq \frac{21r-5+5s}{2}-4r-4s+4\\
 &=\frac{13r-3s+3}{2}>0.
\end{align*}
Both cases contradict the choice of $\mathcal P$. Therefore every
partition satisfies the required inequality, and
\cref{thm:sec4-nwt} completes the proof.
\end{proof}

\begin{theorem}\label{thm:sec4-gap-rooted}
Let $q\geq 21$, $G\in\mathcal C_q$, and let $z\in V(G)$ have
degree $4$ or $5$. Every valid preorientation at $z$ extends to a
modulo $3$-orientation of $G$.
\end{theorem}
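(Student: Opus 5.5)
Take a counterexample $(G,z)$ with $|V(G)|$ minimum. The plan is to reduce to a graph $J$ satisfying the hypotheses of \cref{lem:sec4-rooted-packing}, conclude that $J-z$ has four edge-disjoint spanning trees, and then produce the extension via \cref{thm:sec4-hll-rooted}.

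\textbf{Step 1: reductions.} First, $G-z$ may be assumed $\Z_3$-reduced. Indeed, if $K$ is a nontrivial $\Z_3$-connected subgraph of $G-z$, then $G/K\in\mathcal C_q$, because every edge-cut of $G/K$ lifts to an edge-cut of $G$ of the same size (\cref{prop:contraction-connectivity}). Minimality then extends the preorientation to $G/K$, and \cref{prop-extension} lifts it to $G$. In particular $G-z$ is simple.

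Second, there is no edge-cut $E(X,X^c)$ of size $4$ or $5$ with $z\in X^c$ and $|X|\geq 2$. Suppose one exists, and take $X$ minimal. Contracting $X$ gives a smaller member of $\mathcal C_q$, so minimality extends the preorientation there. This orients $E(X,X^c)$ validly, since the contracted vertex $X$ is balanced. Now contract $X^c$ to a new root $z'$ of degree $4$ or $5$; again $G/X^c\in\mathcal C_q$ and it is smaller, so minimality gives an extension on $G/X^c$. Gluing the two orientations gives a contradiction. The only thing to check is that $G/X^c$ lies in $\mathcal C_q$, which again follows from \cref{prop:contraction-connectivity}.

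Consequently, every vertex $v\neq z$ of degree at most $5$ is isolated by its own cut. Any two vertices not separated by a small cut have $\lambda\geq q\ge 21$: a minimum $x$–$y$ cut is either of size at least $q$ or of size $4,5$, and the latter has a singleton shore or a shore containing $z$ and at most one other vertex.

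\textbf{Step 2: partition and lifting.} Define $H$ as the set of vertices other than $z$ of degree at least $q$, or more generally those not isolated by cuts of size at most $5$. Let $L_4$ be the degree-$4$ vertices and $L_5$ the degree-$5$ vertices other than $z$. Degree-$4$ vertices $s\neq z$ are completely lifted using \cref{thm:sec4-mader}; $s$ is nonseparating by $4$-edge-connectivity and has distinct neighbours by simplicity. This preserves all $\lambda(x,y)$ and degrees, and the orientation pulls back through the liftings, exactly as in the proof of \cref{thm:sec4-essential-rooted}.

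Two low-degree vertices cannot be adjacent: if they were, the cut around them would have size at most $8$, hence by Step 1 at most $5$, which forces one shore of size $2$, a contradiction. So $L=L_5$ is independent.

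If a neighbour of $z$ has degree $5$, then $\{z,v\}$ bounds a cut of size at most $8$; it must then have size at most $5$ and its complement has at least two vertices. This is excluded by the root-side version of Step 1, or else handled by contracting $\{z,v\}$ as the new root exactly as in Claim \ref{cl:sec3-three-edge}. So every neighbour of $z$ lies in $H$. Any two vertices of $H$ have $\lambda_J\geq 21$ by Step 1 and Mader's preservation.

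\textbf{Step 3: conclusion.} \Cref{lem:sec4-rooted-packing} gives four edge-disjoint spanning trees in $J-z$. Adding $z$ with at least one edge extends them to four edge-disjoint spanning trees of $J$, since $d_J(z)\geq 4$ allows one edge per tree. $J-z$ is $2$-edge-connected because it contains these trees. So \cref{thm:sec4-hll-rooted} extends the valid preorientation at $z$ to a modulo $3$-orientation of $J$, which lifts back to $G$.

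\textbf{Main obstacle.} The hardest part is Step 1's cut analysis. One has to verify that contractions preserve membership in $\mathcal C_q$, in particular that new cuts do not fall into the forbidden range $6$ to $q-1$. One also has to establish precisely that $\lambda(x,y)\ge 21$ on $H$, treating carefully the small cuts whose shore contains $z$. I would handle these by taking minimal small cuts and using their shores as contracted roots.
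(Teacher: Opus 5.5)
Your proposal is essentially the paper's proof. Like the paper, you split along nontrivial $4$/$5$-edge-cuts via the two contractions, make $G-z$ $\Z_3$-reduced (hence simple), show that the vertices of degree $4$ or $5$ together with $z$ are independent, completely lift the degree-$4$ vertices by \cref{thm:sec4-mader}, and apply \cref{lem:sec4-rooted-packing}. The only (harmless) difference is the last step: you attach $z$ to the four trees and apply \cref{thm:sec4-hll-rooted} on $J$, whereas the paper uses that $J-z$ is $\Z_3$-connected by \cref{thm:hll-4tree} and cancels the boundary induced at $z$.

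Two details need fixing. In Step 1 you must require both shores to have at least two vertices; otherwise the star at $z$ is such a cut and $G/X^c=G$ is not smaller. In the adjacency argument you must treat the case where $G-\{u,v\}$ is disconnected. Then the boundary of $\{u,v\}$ is not an edge-cut, so membership in $\mathcal C_q$ says nothing about its size. In that case one gets two singleton components, each sending four edges to $\{u,v\}$, which contradicts either simplicity of $G-z$ or $d(z)\le 5$.
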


\begin{proof}
We argue by induction on $|V(G)|$. If $|V(G)|=2$, the
preorientation balances the other vertex automatically. Suppose that
$|V(G)|=3$, and denote the other two vertices by $u$ and $v$. There
are at least two $uv$-edges. Otherwise, the $4$-edge-connectivity of
$G$ would force at least three edges from each of $u$ and $v$ to
$z$, contrary to $d_G(z)\leq 5$. By part (i) of
\cref{lem:basic-facts}, the multiple edges between $u$ and $v$ can realize the imbalance
required at $u$ and $v$. Thus the assertion holds.

Suppose that $G$ has a nontrivial edge-cut
$B=E_G(X,X^c)$ of size $4$ or $5$, where $z\in X$. Let
\[
 G_1=G/X^c
 \quad\text{and}\quad
 G_2=G/X.
\]
By \cref{prop:contraction-connectivity}, both graphs belong to
$\mathcal C_q$ and have fewer vertices. Apply induction to $G_1$ at
$z$. The resulting orientation induces a valid preorientation of
$B$ at the contracted vertex of $G_2$. Apply induction to $G_2$ with
this preorientation. The two orientations agree on $B$ and combine
to give the required orientation of $G$.

We may therefore assume that every edge-cut of size $4$ or $5$ is
trivial. 
We can also assume $G-z$ is $\Z_3$-reduced, i.e., it contains no nontrivial $\Z_3$-connected subgraph. For otherwise, suppose $H$ is a nontrivial $\Z_3$-connected subgraph of $G-z$ and we consider the graph $G/H$. By \cref{prop:contraction-connectivity} and induction, the valid preorientation at $z$ extends to a modulo $3$-orientation of $G/H$.
It follows from \cref{prop-extension} and part (i) of \cref{lem:basic-facts} that $G$ also admits a modulo $3$-orientation.
In particular, we can assume no two vertices outside $z$ are joined by parallel
edges (i.e., $G-z$ is a simple graph).

\begin{claim}\label{cl:sec4-atomic-structure}
Every vertex $v\neq z$ has degree $4$, degree $5$, or degree at least
$q$. Moreover, all vertices of degree $4$ or $5$, including $z$, form
an independent set.
\end{claim}

\begin{claimproof}
The first part of this lemma is trivial since $|E(w,G-w)|=d(w)$, and for each vertex $w$, the edge-cut $E(w,G-w)$ has size $4$, $5$, or at least $q$.

Suppose next that two vertices $u$ and $v$ of degree $4$ or $5$ are
adjacent. Then
\begin{equation}\label{eq:sec4-low-pair-cut}
 d_G(\{u,v\})
 =d_G(u)+d_G(v)-2e_G(u,v)\leq 8.
\end{equation}
If $G-\{u,v\}$ is connected, this is a nontrivial edge-cut of a
forbidden size or a nontrivial edge-cut of size $4$ or $5$, a
contradiction. Otherwise, let $C_1,\ldots,C_t$ be the components of
$G-\{u,v\}$. The orders of the edge-cuts around these components
sum to the left-hand side of
\eqref{eq:sec4-low-pair-cut}. Hence $t=2$, both edge-cuts have size
$4$, $d_G(u)=d_G(v)=5$, and $e_G(u,v)=1$. Since the two edge-cuts
are trivial, both components are singletons. Then we divide the proof according to whether $z\in\{u,v\}$.

If neither $u$ nor $v$ is $z$, then at least one of these singletons is
different from $z$ and is joined to $u$ and $v$ by four edges. This is a contradiction since $G-z$ is $\Z_3$-reduced. If, say,
$u=z$, then each singleton sends at most one edge to $v$ and at least
three edges to $z$. Together with the edge $zv$, this gives
$d_G(z)\geq 7$, contrary to $d_G(z)\leq 5$. This proves the claim.
\end{claimproof}

Let $S$ be the set of degree-$4$ vertices different from $z$, let
$L$ be the set of degree-$5$ vertices different from $z$, and let
\[
 H=V(G)\setminus(S\cup L\cup\{z\}).
\]
By \cref{cl:sec4-atomic-structure}, every vertex of $H$ has degree at
least $q$, and the sets $S\cup L\cup \{z\}$ is independent. In particular, every neighbor of $z$ belongs to $H$.

We completely lift the vertices of $S$ in succession. By
\cref{thm:sec4-mader}, each step preserves all local
edge-connectivities among the remaining vertices. The operation is
always available. Indeed, the current graph remains
$4$-edge-connected. If an unprocessed vertex $s\in S$ were
separating, two components of the graph minus $s$ would each send at
least four edges to $s$, contrary to $d_G(s)=4$. Denote the resulting
graph by $J$. The liftings do not change an edge incident with $z$
or with a vertex of $L$. Thus
\[
 V(J)=H\mathbin{\cup}L\mathbin{\cup}\{z\},
\]
the set $L$ is independent, and every vertex of $L$ still has degree
$5$. For distinct vertices $x,y\in H$, a minimum $x$--$y$ cut in
$G$ may be selected as an edge-cut. An edge-cut of size less than
$q$ has size $4$ or $5$ and is trivial, so it cannot separate two
vertices of $H$. Therefore,
\[
 \lambda_J(x,y)=\lambda_G(x,y)\geq q\geq 21.
\]

By \cref{lem:sec4-rooted-packing}, the graph $J-z$ contains four
pairwise edge-disjoint spanning trees. Hence $J-z$ is
$\Z_3$-connected by \cref{thm:hll-4tree}. The fixed directions at
$z$ induce a $\Z_3$-boundary on $J-z$. Orient $J-z$ to cancel this
boundary. Together with the valid preorientation at $z$, this gives a
modulo $3$-orientation of $J$.

Reverse all complete liftings. Each oriented added edge becomes a
directed two-edge path through the restored vertex, so every restored
vertex remains balanced. The resulting orientation of $G$ extends
the valid preorientation at $z$ and completes the induction.
\end{proof}

\begin{proof}[Proof of \Cref{thm:forbidden-20}]
The hypothesis is equivalent to $G\in\mathcal C_{21}$. We argue by
induction on $|V(G)|$. The assertion is immediate for graphs on at
most two vertices.

Suppose that $G$ has a nontrivial edge-cut $B$ of size $4$ or $5$.
Both quotient graphs have fewer vertices and belong to
$\mathcal C_{21}$ by \cref{prop:contraction-connectivity}. Apply induction
to one quotient. Its orientation induces a valid preorientation of
$B$ at the contracted vertex of the other quotient. The rooted
extension theorem,
\cref{thm:sec4-gap-rooted}, extends the induced preorientation. The
two orientations agree on $B$ and combine to give a modulo
$3$-orientation of $G$.

We may therefore assume that every edge-cut of size $4$ or $5$ is
trivial. If $G$ has a vertex $z$ of degree $4$ or $5$, select a valid
preorientation at $z$ and apply \cref{thm:sec4-gap-rooted}. If no
such vertex exists, then $G$ has no edge-cut of size $4$ or $5$.
Every edge-cut has size at least $21$, so $G$ is $6$-edge-connected.
By \cref{thm:ltwz-six}, it admits a modulo $3$-orientation. This completes the proof.
\end{proof}

\end{document}